\numberwithin{equation}{section}
\newcommand{\cE}{{\mathcal E}}
\newcommand{\cF}{{\mathcal F}}
\newcommand{\cL}{{\mathcal L}}
\newcommand{\cO}{{\mathcal O}}
\newcommand{\cQ}{{\mathcal Q}}
\newcommand{\cT}{{\mathcal T}}
\newcommand{\cU}{{\mathcal U}}
\newcommand{\cV}{{\mathcal V}}
\newcommand{\cc}{{\mathbb C}}
\newcommand{\pp}{{\mathbb P}}
\newcommand{\hY}{{\widehat{Y}}}
\newcommand{\Z}{{\mathbb Z}}
\newcommand{\tE}{\widetilde{E}}
\newcommand{\tV}{\widetilde{V}}
\newcommand{\be}{\bar{e}}
\newcommand{\bt}{\bar{t}}
\newcommand{\bE}{\overline{E}}
\newcommand{\Sym}{\mathrm{Sym}}
\newcommand{\End}{\mathrm{End}\,}
\newcommand{\Ker}{\mathrm{Ker}}
\newcommand{\Aut}{\mathrm{Aut}\,}
\newcommand{\Image}{\mathrm{Im}\,}
\newcommand{\Iden}{\mathrm{Id}}
\newcommand{\Hom}{\mathrm{Hom}}
\newcommand{\rank}{\mathrm{rk}\,}
\newcommand{\Pic}{\mathrm{Pic}}
\newcommand{\ev}{\mathrm{ev}}
\newcommand{\Kc}{K_{C}}
\newcommand{\Oc}{{{\cO}_{C}}}
\newcommand{\sRat}{\underline{\mathrm{Rat}}\,}
\newcommand{\Rat}{\mathrm{Rat}\,}
\newcommand{\sPrin}{\mathrm{\underline{Prin}}\,}
\newcommand{\Prin}{\mathrm{Prin}\,}
\newcommand{\Sec}{\mathrm{Sec}}
\newcommand{\Gr}{\mathrm{Gr}}
\newcommand{\OG}{\mathrm{OG}}
\newcommand{\Quot}{\mathrm{Quot}}
\newcommand{\cElm}{\mathcal{E}lm}
\newcommand{\cQuot}{\mathcal{Q}}
\newcommand{\Supp}{\mathrm{Supp}}
\newcommand{\isom}{\xrightarrow{\sim}}
\newcommand{\Elm}{\mathrm{Elm}}
\newcommand{\SO}{\mathrm{SO}}
\newcommand{\Spin}{\mathrm{Spin}}
\newcommand{\IQ}{\mathrm{IQ}}
\newcommand{\IQo}{\IQ^{\circ}}
\newcommand{\IQe}{\IQ_e (V)}
\newcommand{\IQeo}{\IQ^\circ_e (V)}
\newcommand{\bIQeo}{\overline{\IQ^\circ_e (V)}}
\newcommand{\LQ}{\mathrm{LQ}}
\newcommand{\dpr}{{\delta^\prime}}
\newcommand{\wj}{{\widetilde{j}}}
\newcommand{\bcE}{\overline{\cE}}
\newcommand{\eVfd}{e ( V, f, \delta )}
\newcommand{\QF}{Q_F^e}
\newcommand{\QFo}{\left( \QF \right)^\circ}
\newtheorem{theorem}{{\textbf Theorem}}[section]
\newtheorem{proposition}[theorem]{{\textbf Proposition}}
\newtheorem{corollary}[theorem]{{\textbf Corollary}}
\newtheorem{lemma}[theorem]{{\textbf Lemma}}
\newtheorem{defn}[theorem]{{\textbf Definition}}
\newtheorem{remit}[theorem]{{\textbf Remark}}
\newenvironment{remark}{\begin{remit}\rm}{\end{remit}}
\newenvironment{definition}{\begin{defn}\rm}{\end{defn}}
\newcommand{\red}[1]{{{\textcolor{red}{ #1}}}}
\title[Isotropic Quot schemes of orthogonal bundles]{Isotropic Quot schemes of orthogonal bundles over a curve}
\author{Daewoong Cheong}
\address{Chungbuk National University, Department of Mathematics, Chungdae-ro 1, Seowon-Gu, Cheongju City, Chungbuk 28644, Korea}
\email{daewoongc@chungbuk.ac.kr}
\author{Insong Choe}
\address{Department of Mathematics, Konkuk University, 1 Hwayang-dong, Gwangjin-Gu, Seoul 143-701, Korea}
\email{ischoe@konkuk.ac.kr}
\author{George H.\ Hitching}
\address{Oslo Metropolitan University, Postboks 4, St. Olavs plass, 0130 Oslo, Norway}
\email{gehahi@oslomet.no}
\subjclass[2010]{14H60; 14M17}
\keywords{Orthogonal vector bundle, curve, isotropic Quot scheme}
\begin{document}

\begin{abstract}
We study the isotropic Quot schemes $\IQe$ parameterizing degree $e$ isotropic subsheaves of maximal rank of an orthogonal bundle $V$ over a curve. The scheme $\IQe$ contains a compactification of the space $\IQeo$ of degree $e$ maximal isotropic subbundles, but behaves quite differently from the classical Quot scheme, and the Lagrangian Quot scheme in \cite{CCH1}. We observe that for certain topological types of $V$, the scheme $\IQe$ is empty for all $e$. In the remaining cases, for infinitely many $e$ there are irreducible components of $\IQe$ consisting entirely of nonsaturated subsheaves, and so $\IQe$ is strictly larger than the closure of $\IQeo$. As our main result, we prove that for any orthogonal bundle $V$ and for $e \ll 0$, the closure $\overline{\IQeo}$ of $\IQeo$ is either empty or consists of one or two irreducible connected components, depending on $\deg (V)$ and $e$. In so doing, we also characterize the nonsaturated part of $\overline{\IQeo}$ when $V$ has even rank. \end{abstract}

\maketitle

\section{Introduction}

Let $C$ be a smooth projective curve of genus $g \ge 2$ defined over $\cc$, and $V$ a vector bundle over $C$. We denote by $\Quot_{n, d} (V)$ the Quot scheme parameterizing subsheaves of $V$ of rank $n$ and degree $d$. It contains an open subscheme $\Quot_{n, d}^\circ (V)$ consisting of subbundles. If $V$ is general (for example, if $V$ is very stable), then for all $n$ and $d$, all components of $\Quot_{n, d} (V)$ are smooth and of the expected dimension, and $\Quot_{n, d}^\circ (V)$ is dense in $\Quot_{n, d} (V)$. For arbitrary $V$, however, $\Quot_{n, d} (V)$ may exhibit irregular behavior; some components may have larger dimension than expected, and/or consist entirely of nonsaturated subsheaves. The following fundamental result of Popa and Roth \cite[Theorem 6.2 and Theorem 6.4]{PR} shows that such irregularities disappear when $d$ is sufficiently small.

\begin{theorem} \label{PRirr}  For $0 < n < \rank (V)$, there is an integer $d_n(V)$ such that if $d \le d_n(V)$, the Quot scheme $\Quot_{n, d}(V)$ is irreducible and of the expected dimension, and a general point corresponds to a subbundle of $V$ which is a stable vector bundle. \end{theorem}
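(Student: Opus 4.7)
Write $r = \rank(V)$. The expected dimension of $\Quot_{n,d}(V)$ is
$$\chi(\Hom(E, V/E)) \;=\; n \deg V - rd + n(r-n)(1-g),$$
which grows linearly to $+\infty$ as $d \to -\infty$. The plan is to exploit this growth to show three things simultaneously for $d \ll 0$: smoothness of $\Quot_{n,d}(V)$ at every point, a strict dimension bound on the non-saturated strata, and irreducibility of the open subscheme $\Quot^\circ_{n,d}(V)$ of subbundles.

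For smoothness, I verify vanishing of the obstruction space $\mathrm{Ext}^1(E, V/E)$ at every $[E] \in \Quot_{n,d}(V)$. Using the exact sequence $0 \to \overline E / E \to V/E \to V / \overline E \to 0$, where $\overline E$ is the saturation of $E$, the torsion factor $\overline E / E$ contributes nothing (as $E^\vee \otimes (\overline E / E)$ is torsion on $C$), while $\mathrm{Ext}^1(E, V/\overline E) = H^0(E \otimes (V/\overline E)^\vee \otimes \Kc)^\vee$ vanishes once $\mu_{\max}(E) - \mu_{\min}(V/\overline E) + 2g - 2 < 0$. For $d \ll 0$ this vanishing can be made uniform over all locally free quotients $V/\overline E$ occurring in the Quot scheme, using boundedness and the Harder--Narasimhan filtration of $V$. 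Smoothness then forces every irreducible component to have dimension exactly the expected one.

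For the bound on non-saturated strata, stratify by $\ell = \mathrm{length}(\overline E / E) \ge 1$; the range of $\ell$ is finite since $\mu(\overline E) \le \mu_{\max}(V)$ forces $\ell \le n \mu_{\max}(V) - d$. Each stratum $S_\ell$ maps to $\Quot^\circ_{n, d+\ell}(V)$ by saturation, with fibers of dimension at most $n \ell$ (length-$\ell$ torsion quotients of a rank-$n$ bundle). By the smoothness statement applied at degree $d + \ell$,
$$\dim S_\ell \;\le\; n \deg V - r(d+\ell) + n(r-n)(1-g) + n \ell \;=\; \chi(\Hom(E, V/E)) - (r-n)\ell,$$
strictly less than the expected dimension at $d$ since $r - n \ge 1$. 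Thus no irreducible component of $\Quot_{n,d}(V)$ lies entirely inside $\bigcup_{\ell > 0} S_\ell$.

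For irreducibility of $\Quot^\circ_{n,d}(V)$, analyze the classifying morphism $\pi : \Quot^\circ_{n,d}(V) \to M(n,d)$, $[E \hookrightarrow V] \mapsto [E]$. For $d$ so negative that $d/n + 2g - 2 < \mu_{\min}(V)$, Serre duality gives $\mathrm{Ext}^1(E, V) = 0$ for every stable $E$ of rank $n$ and degree $d$, so $\dim \Hom(E, V) = \chi(E^\vee \otimes V)$; a slope and boundedness argument further shows that a generic nonzero map $E \to V$ is injective with locally free cokernel. This exhibits $\Quot^\circ_{n,d}(V)$, over the stable locus of $M(n,d)$, as an open subset of a projective bundle of relative dimension $\chi(E^\vee \otimes V) - 1$ over the irreducible variety $M(n,d)$ of dimension $n^2(g-1)+1$; dimensions match and a general point is a stable subbundle. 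Combined with the previous two steps, this yields irreducibility of all of $\Quot_{n,d}(V)$. The main obstacle is the uniformity in the smoothness step: one must control the Harder--Narasimhan data of every locally free quotient $V/\overline E$ that can arise, and push $d$ negative enough to beat all of them simultaneously, so that the threshold $d_n(V)$ depends not only on the slopes of $V$ itself but also on those of its possible saturated-subsheaf quotients.
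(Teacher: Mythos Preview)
The paper does not prove this statement itself; it is quoted from Popa--Roth. Assessing your argument on its own merits: the key first step, global smoothness of $\Quot_{n,d}(V)$ for $d\ll 0$, is false in general. The inequality $\mu_{\max}(E)-\mu_{\min}(V/\overline E)+2g-2<0$ that you invoke is controlled only by the Harder--Narasimhan slopes of $V$: one always has $\mu_{\max}(E)\le\mu_{\max}(V)$ and $\mu_{\min}(V/\overline E)\ge\mu_{\min}(V)$, and neither bound tightens as $d\to-\infty$. Concretely, take $V=\Oc^{\oplus 4}$ on a curve of genus $g\ge 2$ and $n=2$. For any $d<0$ choose a line bundle $L$ of degree $d$ and embed $E=\Oc\oplus L$ as a \emph{saturated} rank-two subbundle by sending the summand $\Oc$ to the first factor and $L$ into the second and third factors via two sections of $L^{-1}$ with no common zero. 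Then $V/E\cong L^{-1}\oplus\Oc$, and
\[
\mathrm{Ext}^1(E,V/E)\ \supseteq\ H^1\bigl(C,\Hom(\Oc,\Oc)\bigr)\ =\ H^1(C,\Oc)\ \cong\ \cc^{\,g}\ \ne\ 0 .
\]
Hence $\Quot_{2,d}(\Oc^{\oplus 4})$ is singular at $[E]$ for every $d$. Your deduction that each component has the expected dimension therefore fails, and this also undercuts step 2, where you applied that same conclusion at degree $d+\ell$.

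There is a second, smaller gap in step 3: you show that the locus of \emph{stable} subbundles is irreducible of the expected dimension, but even if every component had that dimension you have not ruled out a further component consisting entirely of unstable subbundles. Popa and Roth do not argue via global smoothness; obtaining both the dimension statement and irreducibility requires a direct stratified dimension estimate together with a mechanism linking the Quot schemes at adjacent degrees, rather than an obstruction-vanishing argument uniform over the whole scheme.
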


In \cite{CCH1}, an analogue of Theorem \ref{PRirr} was proven for symplectic bundles. A bundle $W$ over $C$ is called \textsl{$L$-valued symplectic} if it admits a bilinear nondegenerate antisymmetric form $\sigma \colon W \otimes W \to L$ for some line bundle $L$. A symplectic bundle $W$ always has even rank $2n$.  A subsheaf $E \subset W$ is said to be \textsl{isotropic} if $\sigma |_{E \otimes E} \equiv 0$. An isotropic subbundle (resp., isotropic subsheaf) of maximal rank $\frac{1}{2} \rank (W)$ is called a \textsl{Lagrangian subbundle} (resp., \textsl{Lagrangian subsheaf}). We denote by $\LQ_e (W)$ the subscheme of $\Quot_{n, e}(W)$ parameterizing Lagrangian subsheaves.
 
\begin{theorem} \label{LagQuot} (\cite[Theorem 4.1]{CCH1}) For any symplectic bundle $W$ over $C$, there is an integer $e(W)$ such that for each $e \le e(W)$, the {Lagrangian Quot scheme} $\LQ_e (W)$ is irreducible and of the expected dimension, and a general point corresponds to a subbundle which is a stable vector bundle. \end{theorem}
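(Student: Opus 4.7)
The strategy adapts the Popa--Roth approach (Theorem \ref{PRirr}) to the Lagrangian setting, with the isotropy condition handled via deformation theory of the symplectic form.

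First, I would stratify $\LQ_e(W)$ by saturation defect. For any Lagrangian subsheaf $E \subset W$ of degree $e$, the saturation $\tilde E$ is automatically Lagrangian, with $\deg \tilde E = e + \ell$ for some $\ell \geq 0$. Let $\LQ_{e, \ell}(W) \subset \LQ_e(W)$ denote the locally closed subscheme of subsheaves with torsion quotient $\tilde E/E$ of length $\ell$; the forgetful map $\LQ_{e, \ell}(W) \to \LQ_{e + \ell}^\circ(W)$, $E \mapsto \tilde E$, has fibers given by the classical Quot scheme of length-$\ell$ torsion quotients of $\tilde E$, each of dimension $n \ell$. Hence $\dim \LQ_{e, \ell}(W) = \dim \LQ_{e + \ell}^\circ(W) + n \ell$.

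Next, I would establish smoothness and the expected dimension of $\LQ_e^\circ(W) = \LQ_{e, 0}(W)$ via deformation theory. At a Lagrangian subbundle $E \subset W$, the tangent space is the symmetric part of $\Hom(E, W/E) = \Hom(E, E^* \otimes L)$, namely $H^0(C, \Sym^2 E^* \otimes L)$, with obstructions in $H^1(C, \Sym^2 E^* \otimes L)$. For $e \ll 0$, a Mukai--Sakai-type slope bound on subbundles of $E^*$ forces $H^1(C, \Sym^2 E^* \otimes L) = 0$ at every $E \in \LQ_e^\circ(W)$, so $\LQ_e^\circ(W)$ is smooth of the expected dimension $\chi(\Sym^2 E^* \otimes L)$. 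A direct Riemann--Roch comparison with the stratification then gives $\dim \LQ_e^\circ(W) - \dim \LQ_{e, \ell}(W) = \ell > 0$ for all $\ell > 0$, so every nonsaturated stratum is strictly smaller than the subbundle stratum.

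For irreducibility and stability of a general point, I would study the classifying rational map $\mu \colon \LQ_e^\circ(W) \dashrightarrow M(n, e)$ sending a Lagrangian subbundle to its isomorphism class. By Theorem \ref{PRirr}, for $e \ll 0$ a general Lagrangian subbundle is already a stable vector bundle, so $\mu$ dominates the irreducible moduli space $M(n, e)$. The fiber over stable $E$ is the quotient by $\Aut(E) = \mathbb{G}_m$ of the space of Lagrangian injections $E \hookrightarrow W$, a subvariety of $\Hom(E, W)$ cut out by the vanishing of $\sigma \circ (j \otimes j) \in H^0(C, \wedge^2 E^* \otimes L)$. Its irreducibility of the expected dimension can be established by relating it to the affine space of symplectic extensions of $E^* \otimes L$ by $E$.

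The main obstacle is establishing irreducibility of the fibers of $\mu$ uniformly and ensuring the $H^1$-vanishing at every point of $\LQ_e^\circ(W)$ rather than just at a general point. Both require careful control of subbundle slopes of $E$ as $E$ varies, likely via a Hirschowitz- or Mukai--Sakai-type estimate; an incidental subtlety is verifying that the moduli-theoretic and deformation-theoretic dimension counts agree once the fiber of $\mu$ is properly identified.
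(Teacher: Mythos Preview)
This theorem is not proven in the present paper; it is quoted from \cite{CCH1}. However, the paper's proof of the orthogonal analogue (Theorem \ref{Even}) is explicitly modeled on \cite{CCH1}, so one can read off the method: fix a rank $n$ isotropic subbundle $F \subset W$, realize $W$ as a symplectic extension $0 \to F \to W \to F^* \otimes L \to 0$ via a symmetric principal part, and define open cells $Q_F^e \subset \LQ_e(W)$ consisting of those $E$ with $\rank(E \cap F) = 0$. The projection $\pi_* \colon Q_F^e \to \Elm(F^* \otimes L)$ has fibers which are torsors over $H^0(C, A_\gamma)$ for a sheaf $A_\gamma$ depending only on $\gamma = \pi \circ j$; for $e \ll 0$ one forces $h^1(C, A_\gamma) = 0$ on a dense open, making each $Q_F^e$ irreducible of the expected dimension. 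One then shows the $Q_F^e$ cover $\LQ_e(W)$ and pairwise intersect. Irreducibility is thus established \emph{locally}, without ever needing global smoothness of $\LQ_e^\circ(W)$.

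Your approach has a genuine gap at exactly the point you flag. The claim that a Mukai--Sakai bound forces $H^1(C, \Sym^2 E^* \otimes L) = 0$ at \emph{every} $E \in \LQ_e^\circ(W)$ is false for arbitrary $W$: the degree $e$ being very negative constrains nothing about the instability of $E$ itself, and an unstable $E$ can easily have $h^1(C, \Sym^2 E^* \otimes L) > 0$. (Compare Lemma \ref{VeryStableSmooth}, which obtains such vanishing only for \emph{very stable} $V$.) Without this, your dimension count for $\LQ_e^\circ(W)$ collapses, and with it the stratification comparison: you need $\dim \LQ_{e+\ell}^\circ(W)$ to equal the expected value for all $\ell \ge 0$ up to the maximal degree of a Lagrangian subbundle, which is precisely the regime where the expected-dimension statement can fail. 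The cell method in \cite{CCH1} sidesteps this entirely by working with $A_\gamma$ rather than $\Sym^2 E^*$.

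There is also a circularity in your irreducibility argument. You invoke Theorem \ref{PRirr} to say that for $e \ll 0$ a general Lagrangian subbundle is stable, but Theorem \ref{PRirr} concerns the full Quot scheme $\Quot_{n,e}(W)$, not its Lagrangian locus; there is no a priori reason a general point of any component of $\LQ_e^\circ(W)$ should be general in $\Quot_{n,e}(W)$. And even granting dominance of $\mu$ with irreducible generic fiber, this exhibits only one component dominating $M(n,e)$; it does not exclude further components whose general member is unstable. The cell-covering argument handles both issues simultaneously, since every $E$ (stable or not) lies in some $Q_F^e$.
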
 

The goal of the present article is to prove another analogue of Theorem \ref{PRirr}, for \emph{orthogonal bundles}. An orthogonal bundle $V$ is defined in the same way as a symplectic one, except that the form $\sigma$ is \emph{symmetric} instead of antisymmetric. Note that an orthogonal bundle may have odd rank. Again, any isotropic subsheaf has rank at most $\frac{1}{2} \rank (V)$.

\begin{definition} \label{defnIQe} Let $V$ be an $L$-valued orthogonal bundle of rank $r = 2n$ or $2n+1$. For each integer $e$, we define the \textsl{isotropic Quot scheme} $\IQe$ by
\[ \IQe \ := \ \left\{ [j \colon E \to V] : E \hbox{ isotropic of rank $n$ in } V \right\} \ \subseteq \ \Quot_{n, e} (V) . \]
The same argument as in \cite[Lemma 2.2]{CCH1} shows that $\IQe$ is a closed subscheme of $\Quot_{n, e}(V)$. 
 We denote by $\IQeo$ the open subscheme consisting of saturated isotropic subsheaves; that is, isotropic subbundles. \end{definition}

The symmetric form $\sigma$ induces isomorphisms $V \isom V^* \otimes L$ and $\det (V)^2 \isom L^r$. 
 In the symplectic case, where $r = 2n$, we always have $\det V \cong L^n$ (see \cite{BGsymp}). However, if $V$ is $L$-valued orthogonal of even rank $2n$ then $\det (V)$ can be any square root of $L^{2n}$. It emerges that $V$ has an isotropic subbundle of rank $n$ if and only if $\det (V) \cong L^n$ (Lemma \ref{EvenRankExistence}). In contrast, an orthogonal bundle of odd rank $2n+1$ always has an isotropic subbundle of rank $n$ (Lemma \ref{OddRankExistence}).

When nonempty, the schemes $\IQe$ still exhibit more complicated behavior than $\Quot_{n,e}(V)$ and $\LQ_e(V)$. In {\S}\:\ref{EvenRankComponents} and {\S}\:\ref{OddRankComponents}, we show that $\IQe$ has multiple components consisting entirely of nonsaturated subsheaves, and that for $r = 2n$ it is not equidimensional, even for arbitrarily small $e$. We indicate three reasons for this behavior in the even rank case which do not apply to $\Quot_{n, e} (V)$ or $\LQ_e (W)$.

\begin{enumerate}
\item[(i)] If $V$ admits a rank $n$ isotropic subbundle, then the orthogonal Grassmann bundle $\OG(n, V)$ has two connected components (Proposition \ref{OGVTwoCpts}). When there are sections of both components defining subbundles of the same degree, $\IQeo$ cannot be connected. (See \cite[Theorem 5.3]{CH5}.)

\item[(ii)] Any two isotropic subbundles defining sections of the same component 
 of $\OG(n, V)$ have degrees of the same parity (Theorem \ref{ParityComponents}). 
 Thus a fixed component $\OG(n, V)_\delta$ admits no sections corresponding to isotropic subbundles of degree $e$ if $e$ does not have the appropriate parity. (In fact, if $\deg (L)$ is even, by \cite[Theorem 1.2 (1)]{CH3} the locus $\IQeo$ of saturated subsheaves is empty for infinitely many $e$.) However, by taking full rank subsheaves of a rank $n$ isotropic subbundle, we can produce \emph{nonsaturated} isotropic subsheaves in $\IQe$ whose saturations are sections of either component of $\OG(n, V)$. It follows that for infinitely many $e$, there are components of $\IQe$ consisting entirely of nonsaturated isotropic subsheaves.

\item[(iii)] For fixed $g$, the expected dimension of a component of $\IQeo$ is given by
\[  I (n, \ell, e)  := -(n-1)e  -\frac{n(n-1)}{2} (g-1-\ell) \]
where $\ell := \deg(L)$ (Proposition \ref{ExpDimSmoothness}). On the other hand, the locus of nonsaturated subsheaves $E$ of degree $e$ which are contained in a fixed rank $n$ isotropic subbundle $\bE$ of degree $\be$ is parameterized by $\Quot_{0, e}(\overline{E})$, which has dimension $n(\be - e)$. Hence the dimension of the nonsaturated locus of $\IQe$ is expected to exceed that of the saturated locus for $e \ll 0$. (For comparison; for a symplectic bundle $W$, the expected dimension of a component of $\LQ^\circ_e(W)$ is a linear function of $e$ with slope $-(n+1)$; see \cite[Proposition 2.4]{CCH1}.)
\end{enumerate}

Thus for rank $2n$, we cannot expect $\IQe$ to be irreducible or equidimensional, even for $e \ll 0$. However, the closure $\overline{\IQeo}$ of $\IQeo$ in $\IQe$ is better behaved. Let $w(V) \in \Z_2$ be the Stiefel--Whitney class of $V$, which will be discussed in {\S}\:\ref{StWh}. The following are the main results of this paper.

\begin{theorem} \label{Even} Let $V$ be an $L$-valued orthogonal bundle of even rank $2n \ge 4$ and of determinant $L^n$. Then there is an integer $e(V)$ such that:
\begin{enumerate}
\item[(a)] If $\ell$ is even, then for each $e \le e(V)$ with $e \equiv w (V) \mod 2$, the locus $\overline{\IQeo}$ has two nonempty connected irreducible components, both of which are generically smooth of dimension $I(n, \ell, e)$. Moreover, $\IQeo$ is empty when $e \not \equiv w (V) \mod 2$.
\item[(b)] If $\ell$ is odd, then for each $e \le e(V)$, the locus $\overline{\IQeo}$ is nonempty and irreducible, and generically smooth of dimension $I(n, \ell, e)$.
\end{enumerate} \end{theorem}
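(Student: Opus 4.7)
The plan is to decompose $\IQeo$ according to which connected component of the orthogonal Grassmann bundle $\OG(n, V)$ contains the section defined by a rank $n$ isotropic subbundle, and analyze each piece separately. By Proposition \ref{OGVTwoCpts} we have $\OG(n,V) = \OG(n, V)_+ \sqcup \OG(n, V)_-$; write $X_\delta \subseteq \IQeo$ for the locus of subbundles defining a section of $\OG(n, V)_\delta$. Then $\IQeo = X_+ \sqcup X_-$ as open and closed subschemes. Crucially, the component of the saturation is preserved under specialization inside $\IQe$: for a flat family of subsheaves $E_t \subseteq V$ with $t \ne 0$ saturated and lying in $X_\delta$, the corresponding sections of $\OG(n, V)$ extend to a limit section at $t = 0$ by properness, and that limit must lie in the closed component $\OG(n, V)_\delta$, so the saturation of $E_0$ is in $\OG(n, V)_\delta$. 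Consequently the closures $\overline{X_+}$ and $\overline{X_-}$ taken in $\IQe$ are disjoint.

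\medskip

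The next step is to settle the parity question. Combining Theorem \ref{ParityComponents} with the Stiefel--Whitney theory of {\S}\:\ref{StWh}, I would derive a formula giving the parity of $\deg(\overline{E})$ for $\overline{E}$ defining a section of $\OG(n, V)_\delta$, in terms of $\delta$, $\ell$, and $w(V)$. The expected outcome is: when $\ell$ is even, both components produce subbundles whose degrees are congruent to $w(V) \pmod 2$, which yields simultaneously the emptiness statement in (a) and the nonemptiness of both $X_+$ and $X_-$ for admissible $e \ll 0$; when $\ell$ is odd, the two components impose opposite parities of $\deg(\overline{E})$, so for each $e$ exactly one of $X_+, X_-$ is nonempty.

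\medskip

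For each $\delta$ with $X_\delta$ nonempty, I would then prove that for $e \le e(V)$ (with $e(V)$ uniform in $e$) the stratum $X_\delta$ is irreducible and generically smooth of dimension $I(n, \ell, e)$. Starting from a reference isotropic subbundle $\overline{E}_0 \in \OG(n, V)_\delta$ of large degree, guaranteed by Lemma \ref{EvenRankExistence}, iterated orthogonal elementary transformations would produce families of isotropic subbundles in $\OG(n, V)_\delta$ of arbitrarily small degree, giving nonemptiness. Smoothness at a general point and the dimension count $I(n, \ell, e)$ follow from Proposition \ref{ExpDimSmoothness}. For irreducibility of $X_\delta$ when $e \ll 0$, I would adapt the Popa--Roth argument used in Theorem \ref{PRirr} and the symplectic analogue of Theorem \ref{LagQuot}: the scheme of orthogonal elementary transformations of a general fixed isotropic subbundle should dominate $X_\delta$, and become irreducible once $e$ is sufficiently negative. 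Combined with the first paragraph, since any limit of subbundles in $X_\delta$ has saturation still in $\OG(n, V)_\delta$, nonsaturated boundary points form a proper closed subset of $\overline{X_\delta}$ rather than a new component; this yields in case (a) the two advertised components $\overline{X_+}$, $\overline{X_-}$, and in case (b) the single component $\overline{X_\delta}$.

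\medskip

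The principal obstacle is the irreducibility step: the Popa--Roth machinery must be adapted to enforce both the isotropy condition and the component constraint $\delta$, while delivering a bound $e(V)$ uniform in $e$. A secondary subtlety is the precise parity computation relating $w(V)$, $\ell$, and $\delta$ that separates cases (a) and (b); I expect this to follow from the cocycle description of $w(V)$ in {\S}\:\ref{StWh}, but it requires care when $\ell$ is odd to verify that $X_+$ and $X_-$ really accommodate complementary parities of $e$.
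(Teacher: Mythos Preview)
Your global architecture---split $\IQeo$ by the component $\delta$ of $\OG(n,V)$, sort out parities via Theorem \ref{ParityComponents}, and then prove irreducibility of each nonempty $X_\delta$---matches the paper's. The disjointness of $\overline{X_+}$ and $\overline{X_-}$ and the parity dichotomy between cases (a) and (b) are handled essentially as you outline.

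The genuine gap is your irreducibility step. Your proposed mechanism, that ``the scheme of orthogonal elementary transformations of a general fixed isotropic subbundle should dominate $X_\delta$'', does not work: elementary transformations of a fixed $\overline{E}_0$ produce \emph{nonsaturated} subsheaves with saturation $\overline{E}_0$, not a dominating family of isotropic \emph{subbundles}. The paper's argument is quite different and uses substantial machinery developed in {\S}{\S}\:\ref{OrthExt}--\ref{OrthExtII}. One fixes an isotropic subbundle $F$ in the \emph{opposite} component $\OG(V)_{\delta'}$ and considers the open cell $Q_F^e \subset \IQe_\delta$ of those $E$ with $\mathrm{rank}(E\cap F)=0$ and $\overline{E}/E$ of type $\cT$. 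Writing $V$ as an orthogonal extension $0 \to F \to V \to F^*\otimes L \to 0$ via principal parts (Lemma \ref{orthext}), the projection $\pi_*\colon Q_F^e \to \cT^{2t}(F^*\otimes L)$ is shown to be dominant with irreducible fibers (the fibers are torsors over $H^0(C,A_\gamma)$ by Lemma \ref{DifferentLiftings}); combined with smoothness (Proposition \ref{QFoSmooth}) this gives irreducibility of each $Q_F^e$. Finally, as $F$ varies over $\IQ^\circ_{f_{\delta'}}(V)_{\delta'}$ the cells $Q_F^e$ form an open cover of $\overline{\IQeo_\delta}$ with pairwise nonempty intersections (Proposition \ref{covering}), forcing irreducibility. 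None of this is a routine adaptation of Popa--Roth; the type $\cT$ constraint, the sheaf $A_\gamma$, and the deformation argument of Proposition \ref{QFoDense} are all specific to the orthogonal setting and are what make the dimension count and dominance work.
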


\begin{theorem} \label{Odd} Let $V$ be an $L$-valued orthogonal bundle of rank $2n+1 \ge 3$. There is an integer $e(V)$ such that for each $e \le e(V)$ with $e \equiv w(V) \mod 2$, the locus $\overline{\IQeo}$ is nonempty, irreducible and generically smooth of dimension $I(n+1, \ell, e - \frac{\ell}{2})$. \end{theorem}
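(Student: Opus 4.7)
The plan is to reduce the odd-rank case to Theorem~\ref{Even} by attaching a canonical rank-one orthogonal summand to $V$. Since $(\det V)^2 \cong L^{2n+1}$ (which in particular forces $\ell := \deg L$ to be even), the line bundle $N := \det(V) \otimes L^{-n}$ satisfies $N^2 \cong L$, so it is a distinguished square root of $L$ attached to $V$. Define $V' := V \oplus N$ with the orthogonal form $\sigma_V \oplus (-\tau_N)$, where $\tau_N \colon N \otimes N \isom L$ is the canonical pairing. Then $V'$ is $L$-valued orthogonal of rank $2n+2$ with $\det(V') = L^{n+1}$, so it admits rank $n+1$ isotropic subbundles by Lemma~\ref{EvenRankExistence}.

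The heart of the argument is to exhibit a natural correspondence between $\IQeo$ and an isotropic Quot scheme for $V'$. For each rank-$n$ isotropic subbundle $E \subset V$, a determinant computation yields $E^\perp / E \cong N$, and using this isomorphism we form the fibre product
\[ F \ := \ E^\perp \times_{E^\perp/E} N \ \subset \ V \oplus N \ = \ V' . \]
One verifies that $F$ is a rank-$n+1$ isotropic subbundle of $V'$ fitting into an exact sequence $0 \to E \to F \to N \to 0$, so $\deg F$ differs from $\deg E$ by $\deg N = \ell/2$. Conversely, any rank-$n+1$ isotropic $F \subset V'$ has $F \cap V$ of rank exactly $n$ (rank $n+1$ being impossible in $V$), and the intersection is automatically isotropic in $V$. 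These constructions are inverse to each other up to a $2{:}1$ ambiguity coming from the two spinor components of $\OG(n+1, V')$: for each $E$ there are exactly two lifts $F$, one in each component, reflecting the classical identification $\OG(n, 2n+1) \cong \OG(n+1, 2n+2)^{\pm}$ on fibres. Restricting to one spinor component yields an isomorphism between $\IQeo$ and one component of $\IQ^\circ_{e \pm \ell/2}(V')$, the sign of the shift being fixed by the conventions used.

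Applying Theorem~\ref{Even} to $V'$ then provides the irreducibility, generic smoothness and dimension formula for each component of $\overline{\IQ^\circ_{e'}(V')}$ whenever $e' \le e(V')$ and the appropriate parity condition is satisfied; transferring via the isomorphism above gives the stated properties for $\overline{\IQeo}$, including the dimension $I(n+1, \ell, e - \tfrac{\ell}{2})$. A Stiefel--Whitney comparison expresses $w(V')$ in terms of $w(V)$ and $\deg N \bmod 2$, showing that $e \equiv w(V) \bmod 2$ corresponds precisely to the parity condition for $V'$. The main technical difficulty is promoting the pointwise correspondence to a scheme-theoretic isomorphism and extending it to the closures $\overline{\IQeo}$ and $\overline{\IQ^\circ_{e'}(V')}$, whose points parameterize non-saturated isotropic subsheaves; while the fibre-product construction is natural in families, describing the inverse on degenerate (non-saturated) points requires careful bookkeeping with torsion quotients.
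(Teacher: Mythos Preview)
Your approach is essentially the same as the paper's: you form the orthogonal direct sum $V' = V \perp N$ with the distinguished square root $N = \det(V)\otimes L^{-n}$ of $L$, identify $\IQeo$ with a component of $\IQ^\circ_{e+\ell/2}(V')$ (the paper does this as Lemma~\ref{EvenToOdd}), and then invoke Theorem~\ref{Even} for $V'$. One point worth noting is that your closing worry about extending the correspondence to the closures is unnecessary: the paper only establishes the isomorphism on the saturated loci $\IQeo \cong \IQ^\circ_{e+\ell/2}(V')_\delta$, and then simply observes that the closure of an irreducible open set is irreducible, so the properties (nonemptiness, irreducibility, generic smoothness, dimension) transfer to $\overline{\IQeo}$ without any analysis of nonsaturated points.
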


In proving these results, we find a criterion for a nonsaturated isotropic subsheaf {of an orthogonal $V$ of even rank} to occur as a limit of isotropic subbundles. To state this, we make a definition.

\begin{definition} \label{TorsionTypeT}
\quad \begin{enumerate}
\item[(a)] We say that a torsion sheaf $\tau$ on $C$ \textsl{is of type} $\cT$ if there is a filtration
\[
0 = \tau_0 \:\subset \:\tau_1 \:\subset \:\cdots \: \subset \:\tau_k =\tau,
\]
where $\tau_i/\tau_{i-1} \cong \mathcal{O}_{x_i} \otimes \mathbb{C}^2$ for some $x_i \in C$ for $1 \le i \le k$.
\item[(b)] We say that an element $[E \to V]$ of $\IQe$ is \textsl{of type} $\cT$ if either $E \in \IQeo$ or the quotient $\bE/E$ is of type $\cT$, where $\bE$ is the saturation of $E$. \end{enumerate} \end{definition}We show the following.

\begin{theorem} \label{TypeT} Let $V$ be an orthogonal bundle {of even rank $2n \ge 4$}.
\begin{enumerate}
\item[(a)] If a point $[E \to V]$ of $\IQe$ lies in the closure of $\IQeo$, then it is of type $\cT$.
\item[(b)] There is an integer $e(V)$ such that for $e \le e(V)$, every point $[E \to V]$ in $\IQe$ of type $\cT$ lies in the closure of $\IQeo$.
\end{enumerate}
\end{theorem}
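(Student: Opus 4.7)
For part (a), suppose $[E \to V] \in \overline{\IQeo}$. Choose a smooth pointed curve $(T, 0)$ and a morphism to $\IQe$ mapping $T \setminus \{0\}$ into $\IQeo$ and $0$ to $E$; this yields a flat family $\cE \subset V \boxtimes \cO_T$ with $\cE_t$ a rank-$n$ isotropic subbundle for $t \ne 0$. Since isotropy is a closed condition, the saturation $\bE$ of $E$ in $V$ is a rank-$n$ isotropic subbundle, and $\tau := \bE/E$ is torsion. Being of type $\cT$ is a local property, so I would analyze $\tau$ at each support point $x$. Trivializing $V$ formally at $x$ so that $\sigma$ becomes a standard hyperbolic form $J$ and $\bE$ the standard Lagrangian, the family $\cE$ is represented by a $2n \times n$ matrix $M(z, t)$ over $\cO_{C,x}[[t]]$ satisfying $M^\top J M = 0$, with $\rank M = n$ off the point $(z, t) = (x, 0)$.

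The invariants of $\tau_x$ are the elementary divisors of $M(z, 0)$ at $z = x$. The crucial step is a ``Lagrangian Smith form'' lemma: orthogonal-group row/column operations compatible with the constraint $M^\top J M \equiv 0$ and with the existence of a one-parameter deformation reduce $M(z, 0)$ to a normal form in which the elementary divisors at $z = x$ come in equal pairs $a_1 = a_2 \ge a_3 = a_4 \ge \cdots$. Equivalently, $\tau_x \cong \bigoplus_j (\cO_{C,x}/(z^{b_j}))^2$, which visibly admits a filtration with graded pieces $\cO_x \otimes \cc^2$; patching these local filtrations over the finitely many support points of $\tau$ gives the global type $\cT$ filtration.

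For part (b), I would induct on the length $k$ of the $\cT$-filtration of $\bE/E$; the base $k = 0$ is trivial. The core geometric construction is the case $k = 1$, where $\bE/E \cong \cO_x \otimes \cc^2$ and $E|_x = K$ is a codimension-$2$ subspace of $\bE|_x$. In $V|_x$, the Lagrangians contained in the same component of $\OG(n, V|_x)$ as $\bE|_x$ and containing $K$ form a smooth $\pp^1$ (the minimal rational curve through $\bE|_x$ in the relevant spinor variety). For each $\Lambda$ on this $\pp^1$ distinct from $\bE|_x$, an elementary transformation of $\bE$ in the direction $\Lambda$ produces an isotropic rank-$n$ subbundle $F_\Lambda \subset V$ of degree $\be - 2 = e$, giving a morphism $\pp^1 \setminus \{\bE|_x\} \to \IQeo$ that extends by properness of $\IQe$ to $\pp^1 \to \IQe$. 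The value at $\bE|_x$ is a nonsaturated isotropic subsheaf with saturation $\bE$ and fiber $K$ at $x$, which by uniqueness equals $E$, so $E \in \overline{\IQeo}$.

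For $k \ge 2$, choose $E \subset E^{(1)} \subset \bE$ with $E^{(1)}/E \cong \cO_{x_1} \otimes \cc^2$ corresponding to the first filtration step; then $\bE/E^{(1)}$ is of type $\cT$ with length $k - 1$, and by induction $E^{(1)} \in \overline{\IQo_{e+2}(V)}$. Choosing a family $\{F_s\}$ of isotropic subbundles of degree $e + 2$ with $F_s \to E^{(1)}$, I apply the $k = 1$ construction relatively, using a point $x_1(s) \to x_1$ and a codimension-$2$ subspace $K(s) \subset F_s|_{x_1(s)}$ varying continuously with $s$ to produce a two-parameter family of degree-$e$ subbundles; extracting a diagonal one-parameter family yields subbundles in $\IQeo$ converging to $E$. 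The hypothesis $e \le e(V)$ is used at each inductive step to guarantee that $\IQo_{e + 2i}(V)$ is nonempty and sufficiently well behaved (via Theorem \ref{Even}), so the induction starts and propagates. The main obstacle is this inductive step: matching the Hecke data $(x_1(s), K(s))$ continuously with $s$ so that the diagonal limit is the prescribed $E$ rather than some other type $\cT$ subsheaf with the same saturation, which requires a careful relative construction over the base family $\{F_s\}$.
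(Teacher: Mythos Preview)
For part (a), your ``Lagrangian Smith form'' is the entire content of the statement and you have not proven it. The constraint $M^\top J M = 0$ by itself does not force the elementary divisors of $\bE/E$ to pair up (any corank-one elementary transformation of an isotropic subbundle is again isotropic but not of type $\cT$), so the deformation must enter essentially; you have only asserted that it does. The paper's argument (Proposition \ref{Smoothable}) is global and avoids any such local normal form: given $E \in \overline{\IQeo}$, it constructs an auxiliary rank-$n$ isotropic subbundle $F \subset V$ meeting $\bE$ transversely at every point of $\Supp(\bE/E)$. For any isotropic \emph{subbundle} $E'$ transverse to $F$, Lemma \ref{lemmaTypeT} (via the normal form of antisymmetric principal parts) shows that $(F^*\otimes L)/E'$ is of type $\cT$; since the type-$\cT$ locus is closed in $\Elm^{2k}(F^*\otimes L)$ (Lemma \ref{TypeTParameterSpace}), the same holds for the limit $(F^*\otimes L)/E$. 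The choice of $F$ makes $\Supp(\bE/E)$ and $\Supp\!\left((F^*\otimes L)/\bE\right)$ disjoint, so $\bE/E$ is itself of type $\cT$.

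For part (b), your base case $k=1$ is wrong as stated. Whatever ``elementary transformation of $\bE$ in the direction $\Lambda$'' is meant to be, it cannot in general produce an isotropic subbundle of $V$: Remark \ref{SmoothabilityCounterexample} gives, for $n=2$ and $g \ge 5$, an orthogonal $V$ and a type-$\cT$ point $E = \bE(-x)$ (so $\bE/E \cong \cO_x \otimes \cc^2$, i.e.\ $k=1$) such that $V$ has \emph{no} isotropic rank-$2$ subbundle of degree $\be - 2$ whatsoever. Hence your $\pp^1$ in $\IQeo$ cannot exist there, and the bound $e \le e(V)$ is already essential at $k=1$, not only in the inductive step you flag; it cannot be supplied by quoting nonemptiness of higher strata. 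The paper's proof (Proposition \ref{QFoDense}, {\S}\:\ref{QFoDenseProof}) is quite different and does not induct on $k$: choosing an auxiliary $F$ transverse to $\bE$, one writes $\bE = \Ker(p)$ and $E = \Ker(p + q)$ for antisymmetric principal parts $p, q$ with disjoint supports, then deforms $p+q$ within its fixed cohomology class $[p]$ to a general $\hY$-valued principal part $p'_s$. The degree bound guarantees precisely that the number of summands exceeds $h^1(C, L^{-1}\otimes\wedge^2 F)$, so one can place their classes in general position while keeping their sum equal to $[p]$; then $\Ker(p'_s)$ is a saturated isotropic subsheaf of $V$ for $s \ne 0$ whose flat limit is $E$.
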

Therefore by Theorem \ref{Even}, the space $\IQ_e^\cT (V)$ of subsheaves of type $\cT$ coincides with $\overline{\IQeo}$ for $e \le e(V)$, and provides a compactification of $\IQeo$ which behaves better than $\IQe$. In this sense, the scheme $\IQ_e^\cT (V)$ can be considered a ``correct'' analog of the Lagrangian Quot scheme $\LQ_e (W)$ of a symplectic bundle $W$. Moreover, $\overline{\IQeo}$ naturally has a modular interpretation and inherits a universal family from $Q_{n, e} (V)$, which may in some situations be an advantage compared to compactifications of $\IQeo$ given by Hilbert schemes or moduli of stable maps. In a forthcoming paper, we shall develop an intersection theory on $\overline{\IQeo}$ using Gromov--Witten invariants, as done for Lagrangian Quot schemes in \cite{CCH2}, with enumeration of maximal isotropic subbundles as an application.\\
\par
Here is a summary of the paper. In {\S}\:\ref{existence}, we characterize those orthogonal bundles of rank $2n$ admitting a rank $n$ isotropic subbundle, and study the orthogonal Grassmann bundles of such $V$. From {\S}\:\ref{RankTwo} through {\S}\:\ref{EvenRank}, we assume that $\rank (V) = 2n \ge 4$, as the statements for the odd rank case turn out to follow relatively easily from the even rank case.

In {\S}\:\ref{IsotQuotSch}, we find the expected dimension of $\IQeo$ and compute the Zariski tangent spaces $T_E \IQe$, including the case where $E$ is not saturated. In {\S}\:\ref{OrthExt}, we recall or prove some facts on orthogonal extensions and principal parts, extending results in \cite{Hit1}. In {\S}\:\ref{EvenRankComponents} we describe the totally nonsaturated components of $\IQe$ and prove Theorem \ref{TypeT} (a). In {\S}\:\ref{OrthExtII} we develop further results on orthogonal extensions, liftings and geometry in extension spaces. These are applied in {\S}\:\ref{EvenRank} to prove Theorems \ref{Even} and \ref{TypeT} (b). In {\S}\:\ref{OddRank} we discuss the odd rank case, and prove Theorem \ref{Odd}.

The proofs of some (but not all) results in {\S\S} \ref{OrthExt}, \ref{OrthExtII} and \ref{EvenRank} are virtually identical to their symplectic counterparts in \cite[{\S}\:3]{CCH1}. We indicate where this is the case, and in some cases we omit details.


\subsection*{Acknowledgements} We are grateful to the referee of an early version of \cite{CCH1} for comments which also improved the present work. The first named author was supported by Basic Science Research Program through the National Research Foundation of Korea (NRF) funded by the Ministry of Education (2018R1D1A3B07045594) and Korea Institute for Advanced Study(KIAS) grant funded by the Korea government (MSIP). The second named author was supported by Basic Science Research Programs through the National Research Foundation of Korea (NRF) funded by the Ministry of Education (NRF-2017R1D1A1B03034277). The third named author sincerely thanks Konkuk University, Hanyang University and the Korea Institute of Advanced Study for financial support and hospitality, and acknowledges gratefully a period of research leave supported by Oslo Metropolitan University in 2017-2018.

\subsection*{Notation} Throughout, $C$ denotes a complex projective smooth curve of genus $g \ge 2$. For a vector bundle $W$ over $C$, we write $W|_x$ for the fiber of $W$ at $x \in C$. For a subsheaf $E \subset W$, we denote by $\bE$ the saturation, which is a vector subbundle of $W$. We write $\Quot_{n, d} (W)$ for the Quot scheme parameterizing subsheaves $[j \colon E \to W]$ of rank $n$ and degree $d$. A point $[j \colon E \to W] \in \Quot_{n, d} (W)$ may be denoted by $[E \to W]$ or simply $j$ or $E$ to ease notation. As a special case, for $t \ge 0$, we denote by $\Elm^t (W)$ the Quot scheme $\Quot_{\rank(W), \deg(W) - t} (W)$ parameterizing elementary transformations $[W' \subset W]$ where $W/W'$ is torsion of length $t$.

\section{Orthogonal bundles and isotropic subbundles} \label{existence}

\begin{definition} A vector bundle $V \to C$ of rank $r$ is called an \textsl{$L$-valued orthogonal bundle} if there is a bilinear nondegenerate symmetric form $\sigma \colon V \otimes V \to L$ for some line bundle $L$; equivalently, if there is a symmetric isomorphism $V \isom V^* \otimes L$. \end{definition}

\subsection{Determinants of orthogonal bundles} \label{determinants}

If $V$ is an $L$-valued orthogonal bundle of rank $r$, then by the isomorphism $V \isom V^* \otimes L$, we have $\det(V)^2 = L^r$. It follows that if $r = 2n+1$, then $\deg (L)$ is even.

If $r = 2n$, then $\det (V) = \eta \otimes L^n$ for some $\eta$ of order two in $\Pic^0 (C)$, which may be nontrivial. For example,  the orthogonal direct sum $V \cong \Oc \perp \eta$ for a nontrivial $\eta$  of order two is an $\Oc$-valued orthogonal bundle with $\det (V) = \eta$. (In contrast, an $L$-valued symplectic bundle of rank $2n$ always has determinant $L^n$; see \cite[{\S}\:2]{BGsymp}.)

\begin{remark} \label{LnotUnique} Note that $V = \Oc \oplus \eta$ also admits the $\eta$-valued quadratic form
\[ \left( ( \lambda_1 , \nu_1 ) , ( \lambda_2 , \nu_2 ) \right) \ \mapsto \ \lambda_1 \nu_2 + \lambda_2 \nu_1 . \]
Thus in general $L$ depends not only on the underlying vector bundle $V$, but also on $\sigma$. \end{remark}

\begin{lemma} \label{twistEven} Let $V$ be an $L$-valued orthogonal bundle of rank $2n$ with $\det V = \eta \otimes L^n$ for some $\eta$ of order two in $\Pic^0(C)$. 
\begin{enumerate}
\item[(a)] If $\deg L$ is even, then there is a line bundle $M$ such that the twist $V \otimes M^{-1}$ is an $\Oc$-valued orthogonal bundle of determinant $\eta$.
\item[(b)] If $\deg L$ is odd, then  there is a line bundle $M$ such that the twist $V \otimes M^{-1}$ is an $\Oc (x) $-valued orthogonal bundle of determinant $\eta (nx)$ for some $x \in C$.
 \end{enumerate}
 \end{lemma}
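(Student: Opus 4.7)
The plan is to exploit the elementary observation that twisting an $L$-valued orthogonal bundle by $M^{-1}$ produces an $(L \otimes M^{-2})$-valued orthogonal bundle, since the form $\sigma \colon V \otimes V \to L$ induces a symmetric nondegenerate form
\[
\sigma \otimes \mathrm{id}_{M^{-2}} \colon (V \otimes M^{-1}) \otimes (V \otimes M^{-1}) \ \to \ L \otimes M^{-2}.
\]
Thus the whole content of the lemma is to choose $M$ so that $L \otimes M^{-2}$ equals the target line bundle we want, and then to check the determinant.

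For part (a), with $\deg L$ even, I would observe that the squaring map $\Pic(C) \to \Pic(C)$ restricted to $\Pic^0(C)$ is surjective (it is a finite isogeny of the Jacobian), so any line bundle of even degree admits a square root. Pick $M$ with $M^2 \cong L$. Then $L \otimes M^{-2} \cong \Oc$, and
\[
\det(V \otimes M^{-1}) \ = \ \det(V) \otimes M^{-2n} \ \cong \ \eta \otimes L^n \otimes L^{-n} \ = \ \eta,
\]
as required.

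For part (b), with $\deg L$ odd, I would fix any point $x \in C$, so that $L(-x)$ has even degree; by the same argument there exists $M$ with $M^2 \cong L(-x)$. Then $L \otimes M^{-2} \cong \Oc(x)$, and the determinant computation gives
\[
\det(V \otimes M^{-1}) \ \cong \ \eta \otimes L^n \otimes L(-x)^{-n} \ = \ \eta(nx).
\]
Since the construction is essentially forced, there is no real obstacle; the only subtlety worth mentioning explicitly is the existence of a square root of an even-degree line bundle, which is what dictates the split into the two cases based on the parity of $\deg L$. A short remark at the end could note that $M$ is determined only up to a $2$-torsion twist, which accounts for the $\eta$ ambiguity and is consistent with Remark \ref{LnotUnique}.
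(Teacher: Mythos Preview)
Your proof is correct and follows exactly the same approach as the paper: choose $M$ to be a square root of $L$ for (a) and of $L(-x)$ for (b), then verify the induced form and determinant. The paper's own proof is the single line ``Choose a square root $M$ of $L$ and $L(-x)$ for (a) and (b), respectively,'' so your version simply spells out the straightforward verifications that the paper leaves implicit.
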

 \begin{proof}
Choose a square root $M$ of $L$ and $L(x)$ for (a) and (b), respectively. 
 \end{proof}
\begin{lemma} \label{twistOdd} Let $V$ be an $L$-valued orthogonal bundle of rank $2n+1$. Then there is a line bundle $M$ such that the twist $V \otimes M^{-1}$ is an $\Oc$-valued orthogonal bundle of trivial determinant.
 \end{lemma}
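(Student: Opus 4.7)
The plan is to construct $M$ as a carefully chosen square root of $L$, using the fact that the rank $2n+1$ is odd.

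First, I would note that the isomorphism $V \isom V^* \otimes L$ induced by $\sigma$ gives, upon taking determinants, $\det (V)^2 \cong L^{2n+1}$. In particular $\deg L$ is forced to be even, so square roots of $L$ exist on $C$. Next, for any line bundle $M$, the twist $V \otimes M^{-1}$ inherits a symmetric nondegenerate form with values in $L \otimes M^{-2}$, and its determinant is $\det (V) \otimes M^{-(2n+1)}$. Thus the problem reduces to finding a line bundle $M$ satisfying the two conditions
\begin{equation*}
M^2 \cong L \quad \text{and} \quad M^{2n+1} \cong \det (V).
\end{equation*}

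For the construction, I would start by picking any square root $M_0$ of $L$; one exists because $\deg L$ is even. Then $M_0^{2(2n+1)} \cong L^{2n+1} \cong \det(V)^2$, so the line bundle $\eta := \det(V) \otimes M_0^{-(2n+1)}$ is of order two in $\Pic^0 (C)$. I now set $M := M_0 \otimes \eta$. Since $\eta^2 \cong \Oc$, we have $M^2 \cong M_0^2 \cong L$, so $M$ is still a square root of $L$ and the twisted form takes values in $\Oc$. For the determinant, the fact that $2n+1$ is odd gives $\eta^{2n+1} \cong \eta$, so
\begin{equation*}
M^{2n+1} \cong M_0^{2n+1} \otimes \eta^{2n+1} \cong M_0^{2n+1} \otimes \eta \cong \det (V),
\end{equation*}
whence $\det (V \otimes M^{-1}) \cong \Oc$, as required.

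There is no real obstacle; the only point to watch is the parity argument in the last step. This is precisely where the odd rank case diverges from Lemma \ref{twistEven}: because $(2n+1)$ is odd, the map $\eta' \mapsto (\eta')^{2n+1}$ from $\Pic^0 (C)[2]$ to itself is the identity, so adjusting the initial square root $M_0$ by a $2$-torsion element lets us correct the determinant discrepancy and produce trivial $\det (V \otimes M^{-1})$. By contrast, in even rank $2n$ one is forced to allow a residual $2$-torsion factor $\eta$ in the determinant.
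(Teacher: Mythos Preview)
Your proof is correct and arrives at exactly the same line bundle as the paper: once you simplify $M = M_0 \otimes \eta = \det(V) \otimes M_0^{-2n} = \det(V) \otimes L^{-n}$, this is the inverse-free form of the paper's one-line choice $M = L^{n+1} \otimes (\det V)^{-1}$ (using $(\det V)^2 \cong L^{2n+1}$). The paper simply writes down this formula and leaves the verification implicit, whereas you derive it by first picking an arbitrary square root and then correcting by the $2$-torsion discrepancy; both are the same argument.
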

 \begin{proof}
 The line bundle $M = L^{n+1} \otimes (\det V)^{-1}$ satisfies the desired property. 
  \end{proof}

\subsection{Isotropic subbundles of orthogonal bundles} \label{isotsubbundles}

Let $V$ be a bundle of rank $r$ with orthogonal form $\sigma \colon V \otimes V \to L$. We recall that a subsheaf $E$ of $V$ is called \textsl{isotropic} if $\sigma|_{E \otimes E} \equiv 0$. Our focus will be on isotropic subsheaves of the maximal possible rank, which by linear algebra is $\left\lfloor \frac{r}{2} \right\rfloor$. We write $n := \left\lfloor \frac{r}{2} \right\rfloor$, so $r = 2n$ or $2n + 1$. Not all orthogonal bundles of rank $2n$ have isotropic subbundles of rank $n$, and we shall now characterize those which do. Firstly, we recall the notion of an orthogonal Hecke transformations from \cite[{\S}\:3]{BG}.

Let $V$ be an $L$-valued orthogonal bundle of rank $2n$. Let $\Lambda$ be an $n$-dimensional isotropic subspace of a fiber $V|_x$. Let $\tV$ be the subsheaf of $V$ of sections with values in $\Lambda$ at $x$. This is an elementary transformation
\begin{equation} 0 \ \to \ \tV \ \to \ V \ \to \ \cO_x \otimes \cc^n \ \to \ 0 . \label{OrthHecke} \end{equation}
By \cite[Proposition 3.1]{BG}, the bundle $\tV$ is $L(-x)$-valued orthogonal, and clearly of determinant $\det (V) \otimes \Oc (-nx)$. We call $\tV$ an \textsl{orthogonal Hecke transformation of $V$}. 

\begin{lemma}  \label{EvenRankExistence}
Let $V$ be an $L$-valued orthogonal bundle of rank $2n$. Then the following statements are equivalent.
\begin{enumerate}
\item[(1)] $V$ has an isotropic subbundle of rank $n$.
\item[(2)] $\det(V) = L^n$.
\item[(3)] There is a line bundle $M$ such that either $V \otimes M^{-1}$ or an orthogonal Hecke transformation of $V \otimes M^{-1}$ is an $\Oc$-valued orthogonal bundle of trivial determinant.
\item[(4)] There exists a nonempty Zariski open subset $U$ of $C$ together with a trivialization $V|_U \cong \cc^{2n} \times U$ such that the standard basis of $\cc^{2n}$ forms an orthonormal frame.
\end{enumerate}
\end{lemma}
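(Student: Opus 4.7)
The plan is to prove the cycle $(1) \Rightarrow (2) \Rightarrow (3) \Rightarrow (4) \Rightarrow (1)$. The implications $(1) \Rightarrow (2)$ and $(4) \Rightarrow (1)$ are the easy ends. For $(1) \Rightarrow (2)$, if $E \subset V$ is a rank $n$ isotropic subbundle, then the orthogonal form induces an isomorphism $V/E \cong E^* \otimes L$, and taking determinants gives $\det V = \det E \otimes \det (E^* \otimes L) = L^n$. For $(4) \Rightarrow (1)$, given an orthonormal frame $e_1, \ldots, e_{2n}$ on an open $U$, the local sections $e_1 + i e_2,\, e_3 + i e_4,\, \ldots,\, e_{2n-1} + i e_{2n}$ generate a rank $n$ isotropic subsheaf of $V|_U$ (as a direct check with $\sigma(e_j, e_k) = \delta_{jk}$ shows); extending uniquely to a subsheaf of $V$ on $C$ and saturating yields the required isotropic subbundle.

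For $(2) \Rightarrow (3)$, I would split cases on the parity of $\ell = \deg L$. If $\ell$ is even, then Lemma \ref{twistEven}(a) with $\eta = \Oc$ (which holds by (2)) gives $V \otimes M^{-1}$ as an $\Oc$-valued orthogonal bundle of trivial determinant. If $\ell$ is odd, Lemma \ref{twistEven}(b) provides $M$ such that $V \otimes M^{-1}$ is $\Oc(x)$-valued of determinant $\Oc(nx)$; now I would choose an $n$-dimensional isotropic subspace $\Lambda$ of the fiber $(V \otimes M^{-1})|_x$, which exists because every nondegenerate even-dimensional symmetric bilinear form over $\cc$ is split. Performing the orthogonal Hecke transformation along $\Lambda$ produces an $\Oc$-valued orthogonal bundle whose determinant is $\Oc(nx) \otimes \Oc(-nx) = \Oc$, as required.

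The main step is $(3) \Rightarrow (4)$. Let $W$ denote the $\Oc$-valued orthogonal bundle of trivial determinant supplied by (3). Over $K := \cc(C)$, the form on $W$ restricts to a nondegenerate quadratic form in $2n$ variables of trivial discriminant. Tsen's theorem says $K$ is a $C_1$-field, so this form is isotropic; combined with trivial discriminant and even dimension, Witt decomposition splits it over $K$ as a sum of $n$ hyperbolic planes. Since $i \in \cc \subset K$, the identity $(u+iv)(u-iv) = u^2 + v^2$ converts each hyperbolic plane into a sum of two squares, yielding an orthonormal $K$-basis of $W$. These $2n$ rational sections of $W$ are regular and orthonormal on some Zariski open $U \subset C$. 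Finally, reversing the Hecke (by removing the Hecke point from $U$) and the twist (by shrinking $U$ further so that $M$ is trivializable on it) transports this frame to the desired orthonormal frame of $V$ on an open subset. The principal obstacle is precisely this generic splitting of the orthogonal form, which I would resolve by combining Tsen's theorem with the crucial fact that $\cc$ contains $i$, so that hyperbolic planes can be rewritten as sums of squares; without the latter, one would only obtain an orthogonal basis with non-unit norms, which would not yield condition (4).
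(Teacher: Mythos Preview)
Your proof is correct and follows the same cycle $(1) \Rightarrow (2) \Rightarrow (3) \Rightarrow (4) \Rightarrow (1)$ as the paper, with essentially identical arguments for $(1) \Rightarrow (2)$, $(2) \Rightarrow (3)$, and $(4) \Rightarrow (1)$. The genuine difference is in $(3) \Rightarrow (4)$. The paper observes that an $\Oc$-valued orthogonal bundle $V_0$ with $\det V_0 = \Oc$ lifts to a principal $\SO_{2n}$-bundle, and then invokes the general theorem (citing Ramanathan) that a principal bundle with connected structure group over a curve is Zariski locally trivial; the orthonormal frame drops out immediately. You instead argue directly with the quadratic form over $K = \cc(C)$: the hypothesis $\det W = \Oc$ makes the discriminant a square in $K$, Tsen's theorem (plus the fact that $-1 \in (K^*)^2$ preserves the discriminant condition when splitting off hyperbolic planes) forces the form to be totally hyperbolic, and $\sqrt{-1}\in\cc$ converts each hyperbolic plane to a sum of two squares. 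Your route is more self-contained and effectively unpacks, for $\SO_{2n}$, exactly the content of the local-triviality theorem; the paper's route is shorter and situates the statement in the general principal-bundle framework. One point you should make explicit: the assertion ``trivial discriminant'' needs the observation that a global nowhere-vanishing section $\omega$ of $\det W$ lets one compare any rational frame to $\omega$, yielding $\det G = c\,h^2$ with $c\in\cc^*$ constant and $h\in K^*$, hence $\det G\in (K^*)^2$; this is precisely where the hypothesis $\det W = \Oc$ enters your argument.
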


\begin{proof} $(1) \Rightarrow (2)$: Suppose $V$ has a rank $n$ isotropic subbundle $E$. Since $E$ coincides with its orthogonal complement $E^\perp$, there is an exact sequence $0 \to E \to V \to \Hom(E, L) \to 0$. From this it follows that $\det (V)  = L^n$.

$(2) \Rightarrow (3)$: Using Lemma \ref{twistEven}: If $\deg L$ is even, then some twist $V \otimes M^{-1}$ is an $\Oc$-valued orthogonal bundle of trivial determinant. If $\deg L$ is odd, some twist $V \otimes M^{-1}$ is $\Oc(x)$-valued orthogonal of determinant $\Oc (nx)$, for some $x \in C$. Choose any $n$-dimensional isotropic subspace $\Lambda \subset (V \otimes M)|_x$ and consider the associated orthogonal Hecke transformation
\[ 0 \ \to \ \widetilde{V \otimes M^{-1}} \ \to \ V \otimes M^{-1} \ \to \ \cO_x \otimes \cc^n \ \to \ 0 . \]
Then $\widetilde{V \otimes M^{-1}}$ is $\Oc$-valued orthogonal of trivial determinant.

$(3) \Rightarrow (4)$: Let $V_0$ be an $\Oc$-valued orthogonal bundle of trivial determinant obtained from $V$ by tensor product and/or Hecke transformations as stated. Then we can find a principal $\SO_{2n}$-bundle $P_0$ whose associated vector bundle is $V_0$. (In general, there are two choices of $P_0$, by \cite{Serm12}.) As $\dim (C) = 1$ and $\SO_{2n}$ is connected, $P_0$ is Zariski locally trivial (see \cite[2.11]{RamI}). In particular,  we can find 
 an orthonormal frame for $V_0|_U$ for some Zariski open subset $U \subseteq C$. Then for any line bundle $M$, any orthogonal Hecke transform $V_1$ of $V_0 \otimes M^{-1}$ inherits the orthonormal frame over $U \setminus B$ for some finite set $B$. In particular, $V$ admits an orthonormal frame over a Zariski open subset of $C$.

$(4) \Rightarrow (1)$: Given an orthonormal frame over a Zariski open subset $U \subseteq C$, we can choose a rank $n$ isotropic subbundle $E_U \subset V|_U$. As $\dim (C) = 1$, we can extend $E_U$ to a rank $n$ isotropic subbundle $E \subset V$. \end{proof}

\begin{remark} This implies that if $V$ is $L$-valued orthogonal of even rank and  $\det (V) = \eta \otimes L^n$ for $\eta \neq \Oc$, then $V$ has \emph{no} rank $n$ isotropic subbundle. Let us see directly why this slightly surprising fact holds in a special case. We consider again the orthogonal direct sum $V = \Oc \perp \eta$ for a nontrivial $\eta$ of order two, with $\Oc$-valued quadratic form given by
\[ \left( ( \lambda_1, \nu_1 ) , ( \lambda_2 , \nu_2 ) \right) \ \mapsto \ \lambda_1 \lambda_2 + \nu_1 \otimes \nu_2 . \]
The locus of isotropic lines in $\pp V$ is given by $\{ ( 1 : \nu/\lambda ) : ( \nu / \lambda )^{\otimes 2} \ = \ -1 \}$. 
Clearly this is isomorphic to the \'etale double cover $C_\eta \to C$ determined by $\eta$. But since $\eta$ is nontrivial, $C_\eta$ is irreducible, so the cover admits no section over any Zariski open subset of $C$. It follows that $\Oc \perp \eta$ has no line subbundle isotropic with respect to this choice of quadratic form. \end{remark}

The next two results show that isotropic subbundles exist in all remaining cases.

\begin{lemma}  \label{OddRankExistence}
Every orthogonal bundle $V$ of  odd rank $2n+1 \ge 1$ admits an isotropic subbundle of rank $n$.
\end{lemma}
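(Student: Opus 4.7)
The plan is to reduce the odd rank case to the simplest situation and then mimic the implications $(3)\Rightarrow(4)\Rightarrow(1)$ from the proof of Lemma \ref{EvenRankExistence}. The key observation is that, unlike the even rank case, for odd rank there is no topological obstruction analogous to the $\eta$ in $\det(V)=\eta\otimes L^n$, so no Hecke transformation is needed.

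First, I would invoke Lemma \ref{twistOdd} to find a line bundle $M$ such that $V_0 := V \otimes M^{-1}$ is an $\Oc$-valued orthogonal bundle of trivial determinant. Note that twisting is harmless for our purposes: if $E_0 \subset V_0$ is an isotropic subbundle of rank $n$ with respect to the $\Oc$-valued form, then $E_0 \otimes M \subset V$ is an isotropic subbundle of rank $n$ with respect to the original $L$-valued form, since isotropy is a fiberwise linear condition preserved by tensoring with a line bundle. Hence it suffices to produce a rank $n$ isotropic subbundle of $V_0$.

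Next, since $V_0$ is $\Oc$-valued orthogonal of rank $2n+1$ with trivial determinant, it is associated to a principal $\SO_{2n+1}$-bundle $P_0$ over $C$. The group $\SO_{2n+1}$ is connected, so by \cite[2.11]{RamI} the bundle $P_0$ is Zariski locally trivial. Consequently, there is a nonempty Zariski open $U \subseteq C$ and a trivialization $V_0|_U \cong \cc^{2n+1} \times U$ under which the standard basis of $\cc^{2n+1}$ forms an orthonormal frame with respect to the standard symmetric form on $\cc^{2n+1}$.

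Finally, pick any $n$-dimensional isotropic subspace $\Lambda \subset \cc^{2n+1}$ for the standard form (which exists since the maximal isotropic subspaces of a $(2n+1)$-dimensional quadratic space have dimension $n$). This gives a rank $n$ isotropic subbundle $E_U := \Lambda \times U \subset V_0|_U$. Because $C$ is a smooth curve and $V_0$ is locally free of rank $2n+1$, the subsheaf $E_U$ extends uniquely to a rank $n$ subsheaf $E_0 \subset V_0$ whose saturation is a vector subbundle of rank $n$; this saturation is still isotropic since the isotropy condition is closed. Twisting back by $M$ produces the desired rank $n$ isotropic subbundle of $V$. I do not expect any real obstacle here: the connectedness of $\SO_{2n+1}$ and the absence of a nontrivial determinantal torsor make the argument strictly easier than the even rank case already treated.
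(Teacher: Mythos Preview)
Your proof is correct and follows essentially the same approach as the paper: twist by Lemma~\ref{twistOdd} to reduce to an $\Oc$-valued bundle of trivial determinant, use Zariski local triviality of the associated $\SO_{2n+1}$-bundle (via connectedness and \cite[2.11]{RamI}) to find a constant isotropic subbundle over an open set, and extend over the curve. The paper's version is just a terser statement of the same argument.
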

\begin{proof}
By Lemma \ref{twistOdd}, there is a line bundle $M$ such that the twist $V \otimes M^{-1}$ is $\Oc$-valued orthogonal bundle of trivial determinant. As in the case of even rank, the associated $\SO_{2n+1}$-bundle is Zariski locally trivial and we can find an isotropic subbundle  by choosing a trivial isotropic subbundle over a Zariski open subset and extending it to $C$.
\end{proof}

\begin{corollary} Every orthogonal bundle of rank $2n \ge 4$ admits an isotropic subbundle of rank $n-1$. \end{corollary}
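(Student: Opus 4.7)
The plan is to embed $V$ as a sub-orthogonal-bundle in an orthogonal bundle $W$ of odd rank $2n+1$, apply Lemma \ref{OddRankExistence} to produce a rank-$n$ isotropic subbundle $E \subset W$, and extract a rank-$(n-1)$ isotropic subbundle of $V$ from $E$. The key observation is that $E/(E \cap V)$ injects into $W/V$, a rank-$1$ sheaf, forcing $\rank(E \cap V) \ge n-1$; the saturation of $E \cap V$ in $V$ then furnishes the desired subbundle, with isotropy preserved under saturation since the vanishing condition for the symmetric form is closed and already holds on a dense open subset.

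First I would handle the case where $\deg L$ is even. Take $M$ to be a square root of $L$; the isomorphism $M \otimes M \isom L$ gives $M$ a non-degenerate $L$-valued symmetric form, and $W := V \perp M$ is an $L$-valued orthogonal bundle of rank $2n+1$. Lemma \ref{OddRankExistence} yields a rank-$n$ isotropic subbundle $E \subset W$. In the borderline case $E \subset V$ (equivalent to $\rank(E \cap V) = n$), the bundle $V$ already carries a rank-$n$ isotropic subbundle and any rank-$(n-1)$ sub-bundle of it suffices; otherwise $\rank(E \cap V) = n-1$ and its saturation in $V$ is the desired subbundle.

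For $\deg L$ odd, $L$ has no square root and the construction above breaks down; the plan is to first perform an orthogonal Hecke transform (\ref{OrthHecke}) at some $x \in C$, using any $n$-dimensional isotropic subspace of the non-degenerate quadratic space $V|_x$. This produces an $L(-x)$-valued orthogonal bundle $\tV \subset V$ of rank $2n$ with $\deg L(-x)$ now even, to which the previous step applies, giving a rank-$(n-1)$ isotropic subbundle of $\tV$. Since the $L(-x)$-valued form on $\tV$ factors the restriction of the $L$-valued form on $V$ through the inclusion $L(-x) \hookrightarrow L$, this subbundle is still isotropic as a subsheaf of $V$, and its saturation in $V$ completes the proof. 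The main obstacle is precisely this parity issue with $\deg L$; once the Hecke detour is arranged, the rank count for $E \cap V$ and the verification of isotropy are elementary.
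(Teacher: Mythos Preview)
Your proof is correct and follows essentially the same approach as the paper: reduce to the case $\deg L$ even via an orthogonal Hecke transformation, form the rank-$(2n+1)$ orthogonal bundle $V \perp M$ with $M$ a square root of $L$, apply Lemma~\ref{OddRankExistence}, and intersect the resulting rank-$n$ isotropic subbundle with $V$. You supply more detail than the paper does (the rank count via $E/(E\cap V)\hookrightarrow W/V$, and the compatibility of the $L(-x)$- and $L$-valued forms under Hecke), but the argument is the same.
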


\begin{proof} Let $V$ be $L$-valued orthogonal of rank $2n$ and determinant $\eta \otimes L^n$. As the property of admitting an isotropic subbundle of rank $n-1$ is preserved by Hecke transforms, we may assume that $\deg (L)$ is even. Let $M$ be a square root of $L$. Then the orthogonal direct sum $V \perp M$ is $L$-valued orthogonal of rank $2n+1$. By Lemma \ref{OddRankExistence}, we can find a rank $n$ isotropic subbundle $F \subset V \perp M$. Clearly $F \cap V$ contains an isotropic subbundle of rank $n-1$ in $V$. \end{proof}

\subsection{Stiefel--Whitney classes} \label{StWh}

Let $V$ be an $L$-valued orthogonal bundle of rank $r = 2n$ or $2n+1$. If $L = \Oc$ and $\det(V) = \Oc$, then $V$ determines a principal $\SO_r$-bundle over $C$ (possibly up to a choice of orientation; see \cite{Serm12}). The \textsl{Stiefel--Whitney class} $w_2 ( V ) \in H^2 (C, \Z_2 )$ is the obstruction to lifting this $\SO_r$-bundle to a $\Spin_r$-bundle. See \cite[{\S}\:1]{Bea}.

Suppose now that $\ell = \deg(L)$ is even. If $r = 2n$, suppose in addition that $\det(V) = L^n$. Then by Lemma \ref{twistEven} and \ref{twistOdd} there exists $M$ such that $V \otimes M^{-1}$ is $\Oc$-valued orthogonal of trivial determinant. 

\begin{definition} Under the above assumptions, we define 
\[ w(V) \ := \ w_2 (V \otimes M^{-1}) + \frac{n \ell}{2} \  \ \text{in} \  H^2 (C, \Z_2 ) \cong \Z_2. \] 
 \end{definition}

\noindent Clearly this is independent of the choice of $M$. It will emerge in Theorem \ref{ParityComponents} (a) and Theorem \ref{Odd} that $w(V)$ is the parity of the degree of any rank $n$ isotropic subbundle of $V$.

\subsection{Orthogonal bundles of rank two} \label{RankTwo}

Here we characterize $L$-valued orthogonal bundles of rank two admitting an isotropic line subbundle. By Lemma \ref{EvenRankExistence}, such a bundle must have determinant $L$.

\begin{proposition} \label{rank2ortho} Let $V$ be a vector bundle of rank two with $\det(V) = L$. Then $V$ has an $L$-valued orthogonal structure if and only if $V $ is a direct sum of two line subbundles. Also in this case, the direct summands are isotropic subbundles.
\end{proposition}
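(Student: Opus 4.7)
The plan is to prove both directions, and in each case produce a decomposition $V = A \oplus B$ whose summands are isotropic line subbundles.

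For the direction $(\Leftarrow)$, suppose $V = A \oplus B$ for line subbundles $A$ and $B$; then necessarily $A \otimes B = \det V = L$. I would define the bilinear form $\sigma \colon V \otimes V \to L$ by
\[
\sigma\bigl((a_1, b_1), (a_2, b_2)\bigr) \ := \ a_1 \otimes b_2 + a_2 \otimes b_1,
\]
using the canonical pairing $A \otimes B \to L$. This is manifestly symmetric and nondegenerate, and both $A$ and $B$ are isotropic since $\sigma$ vanishes whenever both arguments lie in the same summand.

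For the converse, suppose $(V, \sigma)$ is $L$-valued orthogonal with $\det V = L$. By Lemma \ref{EvenRankExistence}, $V$ admits a rank one isotropic subbundle $E \subset V$. The key geometric input is that the fiberwise isotropic locus $Y \subset \pp V$ is an \'etale double cover of $C$: on each fiber $V|_x$ the nondegenerate binary quadratic form is projectively equivalent to $xy$, whose zero locus in $\pp^1$ consists of two distinct reduced points. The subbundle $E$ defines a section $s_E \colon C \to Y$, whose image $s_E(C)$ is a connected component of $Y$ mapping isomorphically to $C$. Since the projection $\pi \colon Y \to C$ has degree $2$, the complement $Y' := Y \setminus s_E(C)$ is a second component also mapping isomorphically to $C$, and so corresponds to a second isotropic line subbundle $F \subset V$.

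Finally, I would observe that the natural inclusion $E \oplus F \hookrightarrow V$ is an isomorphism of bundles: fiberwise, $E|_x$ and $F|_x$ are the two distinct isotropic lines in $V|_x$, so the map is a bijection on every fiber. The main point requiring care is the assertion that the isotropic locus $Y$ is \'etale over $C$, which rests on the nondegeneracy of $\sigma$ on every fiber (guaranteeing $Y$ is reduced and unramified); once this is in place, the rest is formal.
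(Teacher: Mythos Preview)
Your proposal is correct and follows essentially the same route as the paper: both directions match, and for the forward implication you use the \'etale double cover of isotropic lines in $\pp V$, invoke Lemma \ref{EvenRankExistence} to produce a section, split the cover, and read off the two isotropic summands. The paper's version is terser but identical in substance.
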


\begin{proof} Suppose $V$ is $L$-valued orthogonal of rank two. For each $x \in C$, there are exactly two points of $\pp V$ corresponding to isotropic lines in $V|_x$. Thus we get an \'etale double cover $\pi \colon \widetilde{C} \to C$. Since $\det(V) = L$, by Lemma \ref{EvenRankExistence} there is an isotropic line subbundle. This gives a section of $\pi$, so $\pi$ must be a split double cover. It follows that $V$ is the direct sum of two isotropic line subbundles.

Conversely, a decomposable bundle $V$ of rank two and determinant $L$ is necessarily of the form $N \oplus N^{-1} L$. Such a bundle admits the $L$-valued orthogonal form
\[ \left( ( v_1 , w_1 ) , ( v_2 , w_2 ) \right) \ \mapsto \ v_1 \otimes w_2 + v_2 \otimes w_1 , \]
with respect to which $N$ and $N^{-1}L$ are isotropic.
\end{proof}

In view of Proposition \ref{rank2ortho}, it is easy to describe the isotropic Quot schemes of an orthogonal bundle of rank two. Thus we shall henceforth assume $r \ge 3$. Moreover, as the results for the odd rank case will be derived from the even rank case in {\S}\:\ref{OddRank}, \textbf{we shall assume that $V$ has rank $2n \ge 4$ until {\S}\:\ref{OddRank}}.

\subsection{Orthogonal Grassmann bundles}

Let $\sigma$ be a nondegenerate symmetric bilinear form on $\cc^{2n}$. The \textsl{orthogonal Grassmannian} $\OG ( n , 2n )$ parameterizes linear subspaces of dimension $n$ in $\cc^{2n}$ which are isotropic with respect to $\sigma$. More geometrically; $\sigma$ defines a smooth quadric hypersurface in $\pp^{2n-1}$, and $\OG (n, 2n)$ parameterizes projective linear subspaces of dimension $n-1$ contained in this quadric. The following well known fact will be used several times.

\begin{lemma} \label{GH} The orthogonal Grassmannian $\OG(n, 2n)$ has two irreducible connected components. Two $n$-dimensional isotropic subspaces $\Lambda_1$ and $\Lambda_2$ belong to the same component if and only if $\dim ( \Lambda_1 \cap \Lambda_2 ) \equiv n \mod 2$. \end{lemma}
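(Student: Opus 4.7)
My plan has two parts: a group-theoretic argument showing $\OG(n,2n)$ has exactly two irreducible connected components, and a local computation in an affine chart verifying the parity criterion.

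First, Witt's extension theorem implies that $O(2n)$ acts transitively on $\OG(n,2n)$. Choose a hyperbolic basis $e_1,\ldots,e_n,f_1,\ldots,f_n$ of $\cc^{2n}$ with $\Lambda_0 = \mathrm{span}(e_1,\ldots,e_n)$ and $\sigma(e_i,f_j) = \delta_{ij}$. A block-matrix computation identifies the stabilizer $P$ of $\Lambda_0$ as the set of matrices
\[
\begin{pmatrix} A & AS \\ 0 & (A^T)^{-1} \end{pmatrix}, \qquad A \in GL_n(\cc),\ S \in \Lambda^2 \Lambda_0^*.
\]
Since $GL_n(\cc)$ is connected and the antisymmetric matrices form an affine space, $P$ is connected; moreover the determinant of such an element equals $\det(A)\cdot\det((A^T)^{-1}) = 1$, so $P \subset \SO(2n)$. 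Hence $\OG(n,2n) \cong O(2n)/P$ has $|O(2n)/\SO(2n)| = 2$ connected components, and each is an orbit of the connected algebraic group $\SO(2n)$ on a projective homogeneous space and therefore irreducible.

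For the parity criterion I would prove the stronger statement that the function
\[
(\Lambda_1,\Lambda_2) \;\longmapsto\; \dim(\Lambda_1\cap\Lambda_2) \pmod 2
\]
is locally constant on $\OG(n,2n)\times\OG(n,2n)$. Given a reference pair $(\Lambda_1^0,\Lambda_2^0)$, pick a maximal isotropic subspace $\Lambda'$ transverse to \emph{both} $\Lambda_1^0$ and $\Lambda_2^0$; such a $\Lambda'$ exists because the simultaneous transversality condition is Zariski open and nonempty. In the resulting affine neighborhood of maximal isotropics transverse to $\Lambda'$, every $\Lambda$ is the graph $\{v + Sv : v \in \Lambda_2^0\}$ of a unique antisymmetric form $S$ on $\Lambda_2^0$, via the isomorphism $\Lambda' \cong (\Lambda_2^0)^*$ induced by $\sigma$. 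A direct computation yields
\[
\dim(\Lambda_{S_1}\cap\Lambda_{S_2}) \;=\; n - \mathrm{rank}(S_1 - S_2),
\]
and since $S_1 - S_2$ is antisymmetric its rank is always even; thus the parity equals $n \pmod 2$ throughout the neighborhood, giving local constancy of the function above.

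To finish, I will evaluate the parity on representatives of each pair of components. The diagonal pair $(\Lambda_0,\Lambda_0)$ has intersection dimension $n$, so pairs in the same component have parity $\equiv n \pmod 2$. Taking the reflection $\tau \in O(2n)\setminus\SO(2n)$ exchanging $e_n \leftrightarrow f_n$ (of determinant $-1$), the pair $(\Lambda_0,\tau\Lambda_0) = (\Lambda_0,\mathrm{span}(e_1,\ldots,e_{n-1},f_n))$ lies in distinct components by the first part and has intersection dimension $n - 1$, confirming that pairs in different components have parity $\equiv n+1 \pmod 2$. The main technical subtlety is the local computation, which relies crucially on choosing $\Lambda'$ transverse to both reference points simultaneously; once this is arranged the whole conclusion rests on the elementary fact that every antisymmetric matrix has even rank.
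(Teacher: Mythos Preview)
The paper's own proof is simply a citation to Griffiths--Harris, so your self-contained argument is a genuinely different route. Your group-theoretic argument that $\OG(n,2n)$ has exactly two irreducible connected components is correct and clean.

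However, there is a genuine gap in the parity argument. You claim that for \emph{any} reference pair $(\Lambda_1^0,\Lambda_2^0)$ one can find a maximal isotropic $\Lambda'$ transverse to both, ``because the simultaneous transversality condition is Zariski open and nonempty''. This is false precisely when $\Lambda_1^0$ and $\Lambda_2^0$ lie in different components. Indeed, your own chart description shows that $\{\Lambda : \Lambda\cap\Lambda' = 0\}$ is an affine space, hence connected, hence contained in a single component of $\OG(n,2n)$. Symmetrically, the set $T(\Lambda_i^0):=\{\Lambda' : \Lambda'\cap\Lambda_i^0=0\}$ is a connected affine chart lying in one component. A continuity argument shows that the component containing $T(\Lambda)$ is locally constant in $\Lambda$, hence constant on each component of $\OG(n,2n)$; so when $\Lambda_1^0,\Lambda_2^0$ lie in different components, $T(\Lambda_1^0)$ and $T(\Lambda_2^0)$ are dense opens in \emph{different} components and therefore disjoint. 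Thus no common $\Lambda'$ exists, and your local-constancy argument applies only to pairs in the same component. This correctly proves one implication (same component $\Rightarrow \dim(\Lambda_1\cap\Lambda_2)\equiv n$), but the single example $(\Lambda_0,\tau\Lambda_0)$ does not by itself show that \emph{every} cross-component pair has parity $n-1$.

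The gap is repairable in the spirit of your argument. For instance, keeping $\Lambda_1=\Lambda_0$ fixed, any $\Lambda_2$ in the opposite component satisfies $\tau\Lambda_2$ in the component of $\Lambda_0$; for $\tau\Lambda_2$ in the chart $T(\mathrm{span}(f_1,\ldots,f_n))$ one has $\tau\Lambda_2=\Gamma_S$ and then
\[
\dim(\Lambda_0\cap\Lambda_2)=\dim(\tau\Lambda_0\cap\Gamma_S)=(n-1)-\mathrm{rank}(S'),
\]
where $S'$ is the antisymmetric $(n{-}1)\times(n{-}1)$ block of $S$ obtained by deleting the $n$th row and column. Since $\mathrm{rank}(S')$ is even, this gives parity $n-1$ on a dense open of the opposite component, and varying the reflection $\tau$ covers the rest. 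But as written, your proof omits this step.
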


\begin{proof} This is a restatement of \cite[Proposition 2, p.\ 735]{GH}. \end{proof}

For an $L$-valued orthogonal bundle $V$ of rank $2n$, we denote by $\OG (V)$ the closed subvariety of the Grassmann bundle $\Gr ( n, V )$ whose fiber at $x \in C$ is the orthogonal Grassmannian $\OG(n, V|_x) \subset \Gr(n, V|_x)$. 

\begin{proposition} \label{OGVTwoCpts} Let $V$ be an $L$-valued orthogonal bundle of rank $2n$ satisfying the equivalent conditions of Lemma \ref{EvenRankExistence}. Then $\OG (V)$ has two connected components. \end{proposition}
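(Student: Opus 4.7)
The plan is to study the Stein factorization $\pi = \pi' \circ f$ of the projection $\pi \colon \OG(V) \to C$: I will show that $\pi' \colon S \to C$ is a finite \'etale double cover, and then exhibit a section of $\pi'$ using a rank-$n$ isotropic subbundle of $V$ provided by Lemma \ref{EvenRankExistence}, which forces $S \cong C \sqcup C$.

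Since $\OG(V)$ is the closed subvariety of the Grassmann bundle $\Gr(n, V)$ cut out fiberwise by the isotropy condition (which is transverse because $\sigma$ is nondegenerate), the map $\pi$ is smooth and proper, and by Lemma \ref{GH} each fiber $\OG(n, V|_x) \cong \OG(n, 2n)$ has exactly two geometrically connected components. Cohomology and base change then shows that $\pi_{\ast} \cO_{\OG(V)}$ is locally free of rank $2$, with fiber at $x \in C$ equal to the semisimple algebra $H^0(\OG(n, V|_x), \cO) \cong \cc \oplus \cc$, one factor per component. Hence $\pi' \colon S := \mathrm{Spec}_C(\pi_{\ast} \cO_{\OG(V)}) \to C$ is finite \'etale of degree $2$, and since $f$ has geometrically connected fibers, $\OG(V)$ has the same number of connected components as $S$.

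By Lemma \ref{EvenRankExistence}, $V$ admits a rank-$n$ isotropic subbundle $E$, which defines a section $s_E \colon C \to \OG(V)$; the composition $f \circ s_E$ is then a section of $\pi'$. Since a connected \'etale double cover of a connected base admits no section, $\pi'$ must be the trivial cover $C \sqcup C$, and therefore $S$, and hence $\OG(V)$, has exactly two connected components. The main technical obstacle I foresee is the verification that $\pi_{\ast} \cO_{\OG(V)}$ is locally free of rank $2$ and that $\pi'$ is \'etale: this is what globalizes Lemma \ref{GH} from a fiberwise statement to a statement about $\OG(V)$ itself, and it amounts to the local constancy of the number of geometric components of the fibers of $\pi$, which follows from the smoothness and properness of $\pi$ combined with Lemma \ref{GH}.
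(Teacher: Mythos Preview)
Your proof is correct and shares the paper's core idea: use the rank-$n$ isotropic subbundle from Lemma \ref{EvenRankExistence} to produce a section, which forces the two fiberwise components of $\OG(V)$ given by Lemma \ref{GH} to separate globally. The paper argues this in one line as a fiber-bundle statement (a section of $\pi$ singles out one component in each fiber, and the other is the complement), whereas you make the passage from ``two components in every fiber'' to ``\'etale double cover of $C$'' explicit via Stein factorization and cohomology-and-base-change. Your version is more detailed and works uniformly in the algebraic category; the paper's version is quicker but leaves the globalization step implicit. Either way the content is the same.
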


\begin{proof} By hypothesis, $V$ has a rank $n$ isotropic subbundle. Let $\pi \colon \OG (V) \to C$ be the projection. By Lemma \ref{GH}, for each $x \in C$ the fiber $\pi^{-1} (x)$ has two connected components. A rank $n$ isotropic subbundle gives a section of the fiber bundle $\pi$. Hence there are two connected components of $\OG(V)$, one of which contains that section and the other not.
\end{proof}

\begin{theorem} \label{ParityComponents} Let $V$ be as in Proposition \ref{OGVTwoCpts}.
\begin{enumerate}
\item[(a)] Suppose $\deg(L)$ is even. Then a section belonging to either component of $\OG(V)$ defines a rank $n$ isotropic subbundle of $V$ whose degree has the same parity as $w(V)$.
\item[(b)] Suppose $\deg(L)$ is odd. Then two rank $n$ isotropic subbundles $E$ and $E'$ of $V$ define sections of the same component of $\OG(V)$ if and only if $\deg (E)$ and $\deg (E')$ have the same parity.
\end{enumerate} \end{theorem}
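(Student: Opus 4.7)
The plan is to prove (a) first by reducing to the case of an $\Oc$-valued orthogonal bundle of trivial determinant and applying the theory of Spin lifts, then to deduce (b) from (a) via an orthogonal Hecke transform which converts the odd-$\ell$ situation into the even-$\ell$ one.

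For (a), I would first apply Lemma \ref{twistEven}(a) to choose a square root $M$ of $L$ and pass to $V_0 := V \otimes M^{-1}$, which is $\Oc$-valued orthogonal of trivial determinant. A rank-$n$ isotropic subbundle $E \subset V$ corresponds to $E_0 := E \otimes M^{-1} \subset V_0$, with $\deg E_0 = \deg E - n\ell/2$, so by the definition of $w(V)$ it suffices to prove $\deg E_0 \equiv w_2(V_0) \mod 2$. Here $V_0$ is a principal $\SO_{2n}$-bundle, and the Lagrangian $E_0$ reduces its structure group to the Levi $\mathrm{GL}_n \subset \SO_{2n}$ via the frame bundle of $E_0$. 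The obstruction $w_2(V_0)$ to a $\Spin_{2n}$-lift thus equals the obstruction to lifting this $\mathrm{GL}_n$-reduction to the pullback central extension $\widetilde{\mathrm{GL}}_n := \mathrm{GL}_n \times_{\SO_{2n}} \Spin_{2n}$. A short computation with the determinant loop $t \mapsto \mathrm{diag}(e^{2\pi i t}, 1, \dots, 1)$ in $\mathrm{GL}_n$, which embeds via the Levi as a full rotation in a single $\SO_2$-plane in $\SO_{2n}$, identifies the induced map $\pi_1(\mathrm{GL}_n) = \Z \to \pi_1(\SO_{2n}) = \Z_2$ as reduction mod $2$; hence $\widetilde{\mathrm{GL}}_n \to \mathrm{GL}_n$ is the square-root-of-determinant cover. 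It follows that the $\mathrm{GL}_n$-bundle associated to $E_0$ lifts iff $\det E_0$ admits a square root, iff $\deg E_0$ is even. Thus $\deg E_0 \equiv w_2(V_0) \mod 2$, proving (a).

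For (b), given Lagrangians $E, E' \subset V$, I would pick any point $x \in C$ and any Lagrangian $\Lambda \subset V|_x$ and form the orthogonal Hecke transform $\tV$ of $V$ at $x$ with datum $\Lambda$, as in (\ref{OrthHecke}). Then $\tV$ is $L(-x)$-valued orthogonal of rank $2n$ with $\det \tV = L(-x)^n$, and $\deg L(-x) = \ell - 1$ is even. Set $j_E := \dim(E|_x \cap \Lambda)$ and $j_{E'} := \dim(E'|_x \cap \Lambda)$, and define $E_1 := E \cap \tV$, the kernel of the composite $E \hookrightarrow V \twoheadrightarrow V/\tV = \cO_x \otimes (V|_x / \Lambda)$. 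The image has dimension $n - j_E$ at $x$, so $E_1$ has rank $n$ and degree $\deg E - (n - j_E)$. A short local calculation using a local basis of $E$ adapted to $\Lambda$ at $x$ shows $(E_1)|_x$ is $n$-dimensional, so $E_1$ is a vector subbundle of $\tV$, and it is isotropic in $\tV$ because $E$ is isotropic in $V$. Define $E'_1$ analogously. Applying (a) to $\tV$ gives $\deg E_1 \equiv \deg E'_1 \mod 2$, which rearranges to
\[ \deg E - \deg E' \equiv j_E + j_{E'} \mod 2 . \]
On the other hand, Lemma \ref{GH} in the fibre $V|_x$ says that $E|_x$ lies in the same component of $\OG(n, V|_x)$ as $\Lambda$ iff $j_E \equiv n \mod 2$, and analogously for $E'|_x$; hence $E|_x$ and $E'|_x$ lie in the same component iff $j_E \equiv j_{E'} \mod 2$. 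Combining, $E$ and $E'$ define sections of the same component of $\OG(V)$ iff $\deg E \equiv \deg E' \mod 2$, which is (b).

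The main technical obstacle is the Lie-theoretic step in (a) identifying $\widetilde{\mathrm{GL}}_n \to \mathrm{GL}_n$ as the square-root-of-determinant cover; this can alternatively be understood via the half-spin line bundle on a component $\OG^\pm(n, 2n)$, whose square is $\det E^\vee$ for the tautological Lagrangian $E$ and which globalises over $\OG^\pm(V)$ precisely when $V$ carries a Spin structure. The local check in (b) that $E_1$ is a subbundle of $\tV$ is routine but essential for the degree computation.
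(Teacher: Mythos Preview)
Your proof is correct and, for part (b), follows the same strategy as the paper: perform an orthogonal Hecke transform at a point $x$ to pass to an orthogonal bundle with even $\deg L$, then invoke part (a) together with Lemma \ref{GH} to match the parity of $\dim(E|_x \cap \Lambda)$ with the parity of $\deg E$. The paper first twists by Lemma \ref{twistEven} (b) so that the Hecke transform lands in the $\Oc$-valued case, but this is cosmetically the same as your direct Hecke at an arbitrary $x$.

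For part (a), the paper simply reduces to $L = \Oc$ and cites \cite[Theorem 1.2 (1)]{CH3}. You instead give a self-contained argument: reduce the structure group to the Levi $\mathrm{GL}_n \subset \SO_{2n}$ via the frame bundle of the isotropic $E_0$, identify the pullback of the Spin cover as the square-root-of-determinant cover of $\mathrm{GL}_n$, and conclude $w_2(V_0) \equiv \deg E_0 \bmod 2$. This is essentially the content of the cited result, so your route is not genuinely different---it just unpacks what the paper outsources. The advantage is that your version is self-contained; the paper's is shorter.
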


\begin{proof} 
(a) 
By Lemma \ref{twistEven} it suffices to consider the case where $L = \Oc$ and $\det (V) = \Oc$. This is proven in \cite[Theorem 1.2 (1)]{CH3}.

(b) By Lemma \ref{twistEven} we may assume $L = \Oc(x)$ and $\det (V) = \Oc(nx)$ for some $x \in C$. Choose an $n$-dimensional isotropic subspace $\Lambda \subset V|_x$ and let $0 \to \tV \to V \to \cO_x \otimes \cc^n \to 0$ be the associated orthogonal Hecke transformation. Fix $\delta \in \{ 1 , 2 \}$. If $E \subset V$ is a rank $n$ isotropic subbundle with $\deg(E) \equiv \delta \mod 2$, we have a diagram
\[ \xymatrix{ 0 \ar[r] & \tV \ar[r] & V \ar[r] & \cc^n_x \ar[r] & 0 \\
0 \ar[r] & \tE \ar[r] \ar[u] & E \ar[r] \ar[u] & \cc^k_x \ar[r] \ar[u] & 0 } \]
where $k = n - \dim ( E|_x \cap \Lambda )$. Thus $\deg(\tE) \ = \ \deg(E) - n + \dim ( E|_x \cap \Lambda )$, so
\[ \dim ( E|_x \cap \Lambda ) \ \equiv \ \deg ( \tE ) + \delta + n \mod 2 . \]
Now since $\tV$ is $\Oc$-valued orthogonal, by part (a) the number $\deg(\tE)$ is constant modulo 2. Hence the expression on the right is constant modulo $2$ as $E$ varies over the rank $n$ isotropic subbundles of $V$ with $\deg (E) \equiv \delta \mod 2$. By Lemma \ref{GH}, the fibers $E|_x$ of all such $E$ belong to the same component of $\OG(V)|_x$. Therefore, all such $E$ define sections of the same component of $\OG(V)$. \end{proof}

\begin{definition} \label{labeling} We label the components $\OG(V)_\delta$, where $\delta \in \{ 1 , 2 \}$. When $\deg(L)$ is even, we choose an arbitrary labeling, just to distinguish one from the other. When $\deg(L)$ is odd, we denote by $\OG(V)_\delta$ the component containing the subbundles $E$ with $\deg (E) \equiv \delta \mod 2$ (cf.\ Theorem \ref{ParityComponents} (b)). \end{definition}

In view of Lemma \ref{EvenRankExistence}, \textbf{for the duration of sections 3-7, we will assume that $\det(V) = L^n$ whenever we consider an $L$-valued orthogonal bundle $V$ of rank $2n$}.

\section{Isotropic Quot schemes} \label{IsotQuotSch}

Let $V$ be an $L$-valued orthogonal bundle of rank $2n$. We shall now study the isotropic Quot schemes $\IQe$ (Definition \ref{defnIQe}). We begin with another definition.

\begin{definition} For $\delta \in \{ 1, 2 \}$ we write $\IQe_\delta$ (respectively, $\IQeo_\delta$) for the locus in $\IQe$ of subsheaves (respectively, subbundles) whose saturations define sections of $\OG(V)_\delta$. \end{definition}

\noindent Clearly $\IQe = \IQe_1 \: \sqcup \: \IQe_2$, but still each $\IQe_\delta$ may have several components.

\subsection{Tangent spaces of isotropic Quot schemes}

We now describe the Zariski tangent space to the isotropic Quot scheme $\IQe$ at a point $[E \to V]$ corresponding to a rank $n$ isotropic subsheaf $E \subset V$ which need not be saturated.
 Let $\bE$ be the saturation of $E$ in $V$. As $\bE = \bE^\perp$, the quotient $V / \bE$ is isomorphic to $\bE^* \otimes L$. Then the torsion sheaf $T := \bE / E$ coincides with the torsion subsheaf of $V/E$. 
 The dual elementary transformation gives a sequence $0 \to \bE^* \otimes L \to E^* \otimes L \to T' \to 0$, where $T' = {\mathcal Ext}^1 (T, L)$ is (noncanonically) isomorphic to $T$. This gives rise to a diagram
\begin{equation} \xymatrix{ H^0 ( C, \Hom (E, T )) \ar@{^{(}->}[r] & H^0 ( C, \Hom (E, V/E)) \ar@{->>}[r] \ar[dr]_c & H^0 ( C, \Hom ( E, \bE^* \otimes L ) ) \ar@{^{(}->}[d] \\
 & & H^0 ( C, \Hom (E, E^* \otimes L )) \ar[d] \\
 & & H^0 ( C, \Hom (E, T')) }
\label{TangSpaceDiag} \end{equation}
We have now a generalization of \cite[Lemma 4.3]{CH3}.

\begin{lemma} \label{Zariski} Let $V$ and $E$ be as above. The Zariski tangent space to $\IQe$ at $[ E \to V ]$ is the inverse image of $H^0 ( C, \wedge^2 E^* \otimes L)$ by the map $c$ in (\ref{TangSpaceDiag}). In particular, if $E$ is saturated then $T_E \IQe \cong H^0 (C, \wedge^2 E^* \otimes L)$. \end{lemma}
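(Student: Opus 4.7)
The plan is to start from the standard identification $T_{[E \to V]} \Quot_{n,e}(V) = H^0(C, \Hom(E, V/E))$, under which a tangent vector $\phi$ corresponds to the first-order deformation $E_\epsilon \subset V \otimes \cc[\epsilon]/(\epsilon^2)$ locally generated by sections of the form $e + \epsilon \tilde\phi(e)$, where $\tilde\phi \colon E \to V$ is any local lift of $\phi$. Since $\IQe$ is the closed subscheme of $\Quot_{n,e}(V)$ cut out by the isotropy condition, $T_{[E\to V]} \IQe$ consists of those $\phi$ for which $E_\epsilon$ remains isotropic. The heart of the proof is to translate this first-order isotropy into a statement about $c(\phi)$.

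To do so, I would expand $\sigma\bigl(e_1 + \epsilon \tilde\phi(e_1),\, e_2 + \epsilon \tilde\phi(e_2)\bigr)$ for local sections $e_1, e_2$ of $E$; using $\sigma|_{E \otimes E} = 0$, the vanishing of the first-order term reads
\[ \sigma(\tilde\phi(e_1), e_2) + \sigma(e_1, \tilde\phi(e_2)) = 0. \]
The bilinear form $\alpha(e_1, e_2) := \sigma(\tilde\phi(e_1), e_2)$ on $E$ is independent of the choice of lift $\tilde\phi$, because any ambiguity lies in $E$ and pairs trivially with $E$; hence it descends to a well-defined map $E \otimes E \to L$. This map is precisely $c(\phi)$, obtained by composing $\phi \colon E \to V/E$ with the $\sigma$-induced arrow $V/E \to E^* \otimes L$ appearing in (\ref{TangSpaceDiag}). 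Symmetry of $\sigma$ rewrites the displayed equation as $\alpha(e_1, e_2) + \alpha(e_2, e_1) = 0$, which says exactly that $c(\phi)$, viewed as a section of $\Hom(E \otimes E, L)$, is skew-symmetric; equivalently, $c(\phi) \in H^0(C, \wedge^2 E^* \otimes L)$. This identifies $T_{[E\to V]}\IQe$ with $c^{-1}\bigl( H^0(C, \wedge^2 E^* \otimes L) \bigr)$.

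The saturated case then follows at once: when $\bE = E$, the torsion sheaves $T$ and $T'$ in (\ref{TangSpaceDiag}) vanish and $V/E \to E^* \otimes L$ is an isomorphism, so $c$ is an isomorphism and the inverse image description collapses to $T_E \IQe \cong H^0(C, \wedge^2 E^* \otimes L)$. The main subtlety, and the one new point relative to the saturated case handled in \cite[Lemma 4.3]{CH3}, is to verify that the entire kernel $H^0(\Hom(E, T))$ of $c$ is \emph{fully} tangent to $\IQe$. Geometrically, a $\phi$ in this kernel deforms $E$ only within its own isotropic saturation $\bE$, so $E_\epsilon \subset \bE \otimes \cc[\epsilon]/(\epsilon^2)$ is automatically isotropic; algebraically this matches the inverse image description, since $c(\phi) = 0$ lies trivially in $H^0(C, \wedge^2 E^* \otimes L)$. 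Once this consistency is spelled out, the two assertions of the lemma are immediate.
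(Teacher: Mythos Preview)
Your argument is correct. Both your proof and the paper's start from the standard identification $T_{[E\to V]}\Quot_{n,e}(V)=H^0(C,\Hom(E,V/E))$ and characterize the tangent space to $\IQe$ as those $\phi$ for which the induced first-order deformation remains isotropic; the difference is in how this condition is translated into the statement $c(\phi)\in H^0(C,\wedge^2 E^*\otimes L)$.

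You work globally: you expand $\sigma$ on the $\epsilon$-deformed subsheaf, observe that the first-order obstruction is the symmetric part of the pairing $(e_1,e_2)\mapsto\sigma(\tilde\phi(e_1),e_2)$, and identify this pairing with $c(\phi)$ directly. The paper instead restricts to a dense open $U\subset C$ disjoint from $\Supp(T)$, where $E|_U=\bE|_U$ is a subbundle; there the isotropy condition is read off fiberwise from the known description $T_{E|_x}\OG(n,V|_x)=\wedge^2 E^*|_x$, and density of $U$ transports the conclusion back to $C$. Your approach is slightly more direct and avoids the restriction-plus-density step; the paper's approach has the advantage of reducing immediately to the saturated case (already treated in \cite[Lemma 4.3]{CH3}), so no separate check of the kernel $H^0(\Hom(E,T))$ is needed. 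Either way the content is the same.
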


\begin{proof} Let $\alpha \colon E \to V/E \cong T \oplus (\bE^* \otimes L)$ represent a tangent vector to $\Quot_{n, e}(V)$ at $[E \to V]$. Let $U$ be a nonempty open subset of $C$ containing no points of $\Supp (T)$, so that $E|_U = \bE|_U$ as subbundles of $V|_U$. Shrinking $U$ if necessary, we may also assume that $L|_U$ is trivial. Then $V|_U$ is an extension
\[ 0 \ \to \ E|_U \ \to \ V|_U \ \to \ E^*|_U \ \to \ 0 \]
and $\alpha|_U$ defines a first-order deformation of $\left[ E|_U \to V|_U \right]$.

For each $x \in U$, the section $\alpha$ defines an element $\alpha(x) \in T_{(E|_x)} \Gr(n, V|_x)$, and the deformation preserves isotropy of $E|_U$ if and only if
\[ \alpha(x) \ \in \ T_{(E|_x)} \OG ( n, V|_x ) \ \subset \ T_{(E|_x)} \Gr (n, V|_x) , \]
for all $x \in U$; that is,
\begin{equation} \alpha|_U \ \in \ H^0 ( U , \wedge^2 E^*|_U ) \ \subset \ H^0 ( U, E^* \otimes E^*|_U) . \label{alphaU} \end{equation}
As $U$ contains no points of $\Supp (T)$, the restriction $\alpha|_U$ is canonically identified with $c(\alpha)|_U$ in (\ref{TangSpaceDiag}). Therefore, since $U$ is dense, (\ref{alphaU}) is equivalent to $c(\alpha) \in H^0 ( C, \wedge^2 E^* \otimes L )$. \end{proof}

Now let $E$ be any bundle of rank $n$ and degree $e$, and $L$ a line bundle of degree $\ell$. To ease notation, we set
\[ I(n, \ell, e) \ := \ \chi ( C, \wedge^2 E^* \otimes L ) \ = \ -(n-1)e - \frac{1}{2}n(n-1)(g-1-\ell) . \]

\begin{proposition} \label{ExpDimSmoothness} Let $V$ be an $L$-valued orthogonal bundle of rank $2n$.
\begin{enumerate}
\item[(a)] Any component of $\IQeo$ has dimension at least $I(n, \ell, e)$.
\item[(b)] For any point $[j \colon E \to V] \in \IQeo$, if $h^1 (C, L \otimes \wedge^2 E^*) = 0$ then $\IQeo$ is smooth of dimension $I(n, \ell, e)$ at $j$. 
\end{enumerate} 
\end{proposition}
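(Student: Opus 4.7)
The plan is to apply standard obstruction-theoretic arguments, using the tangent space computation already supplied by Lemma \ref{Zariski}. Indeed, at a saturated point $[j \colon E \to V] \in \IQeo$ the Zariski tangent space is canonically $H^0(C, \wedge^2 E^* \otimes L)$, whose dimension by Riemann--Roch equals $I(n, \ell, e) + h^1(C, \wedge^2 E^* \otimes L)$. Granted this, both parts will follow at once from a control of the obstructions, so the real work is to pin down an obstruction space and show it embeds into $H^1(C, \wedge^2 E^* \otimes L)$.

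To identify the obstruction space I would work \v{C}ech-locally on $C$. Because $E$ is maximal isotropic, $E = E^{\perp}$ and $V/E \cong E^* \otimes L$; since $\dim C = 1$, over any sufficiently small affine open $U_i \subset C$ the orthogonal form restricts to the hyperbolic pairing on a splitting $V|_{U_i} \cong E|_{U_i} \oplus (E^* \otimes L)|_{U_i}$. A first-order isotropic deformation of $j$ over $U_i$ is then the graph of a section $\phi_i$ of the antisymmetric summand $\wedge^2 E^*|_{U_i} \otimes L$ inside $\mathcal{H}om(E, V/E)|_{U_i}$, and lifting to second order on each $U_i$ is unobstructed. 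The failure of the second-order lifts to glue produces a \v{C}ech $1$-cocycle with values in $\wedge^2 E^* \otimes L$, hence a class in $H^1(C, \wedge^2 E^* \otimes L)$ which vanishes exactly when a global second-order isotropic deformation exists. This is precisely parallel to the symplectic computation in \cite[Proposition 2.4]{CCH1}, with the roles of $\Sym^2$ and $\wedge^2$ exchanged.

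Given this, (a) is a direct application of the Kuranishi-type bound: every component of $\IQeo$ through $[j]$ has local dimension at least
\[
\dim T_E \IQeo \;-\; \dim \mathrm{Obs}_E \;\geq\; h^0(C, \wedge^2 E^* \otimes L) - h^1(C, \wedge^2 E^* \otimes L) \;=\; \chi(C, \wedge^2 E^* \otimes L) \;=\; I(n, \ell, e).
\]
For (b), the hypothesis $h^1(C, \wedge^2 E^* \otimes L) = 0$ makes the obstruction space vanish, so $\IQeo$ is smooth at $[j]$, and its dimension equals $\dim T_E \IQeo = h^0 = \chi = I(n, \ell, e)$.

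The main obstacle is the obstruction identification in the second paragraph; once it is in place, the deduction of (a) and (b) is purely formal. Since the local-trivialization argument is identical to the symplectic case handled in \cite{CCH1} modulo the swap between symmetric and antisymmetric factors, I would keep the exposition brief and refer to that source for the parallel details.
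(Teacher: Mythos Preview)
Your argument is correct and arrives at the same bound $\dim \ge \chi(C, \wedge^2 E^* \otimes L)$ as the paper, but by a slightly different route. The paper, following \cite[Proposition 2.4]{CCH1}, interprets $\IQeo$ as an open set in the Hilbert scheme of sections of the fibration $\pi \colon \OG(n,V) \to C$, identifies the normal bundle of the section $\iota(C)$ with the restriction of the relative tangent bundle $T_\pi$, which is $L \otimes \wedge^2 E^*$, and then invokes the standard Hilbert-scheme lower bound $\dim_{[\iota(C)]} \ge h^0(N) - h^1(N) = \chi$; a short argument then transfers this back to the component of $\IQeo$. Your direct \v{C}ech obstruction calculation is equivalent in substance---the Hilbert-scheme bound is itself a repackaging of precisely this obstruction theory---but it bypasses the auxiliary Hilbert scheme and the check that nearby curves remain sections. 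For part (b), both approaches finish the same way: the lower bound from (a) together with Lemma \ref{Zariski} forces $\dim T_E \IQeo = I(n,\ell,e) \le \dim_E \IQeo$, whence equality and smoothness.
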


\begin{proof} Using Lemma \ref{Zariski}, parts (a) and (b) respectively are proven identically to \cite[Proposition 2.4 (a) and (c)]{CCH1}. \end{proof}

Although we do not rely on this fact later, we would like to point out that the locus $\IQeo$ behaves nicely if $V$ is general in moduli. For simplicity, we restrict to the case of trivial determinant.

\begin{lemma} \label{VeryStableSmooth} Let $V$ be a stable $\Oc$-valued orthogonal bundle of rank $2n \ge 4$ and trivial determinant. If $V$ is general in moduli, then $h^1 ( C, \wedge^2 E^* ) = 0$ for all rank $n$ isotropic subbundles $E$ of $V$. In particular, $\IQeo$ is empty if $I(n, \ell, e) < 0$, and smooth of dimension $I(n, \ell, e)$ otherwise. \end{lemma}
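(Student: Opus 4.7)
The plan is to realize $\IQeo$ for general $V$ as a generic fiber of a smooth parameter space over the moduli of orthogonal bundles. Let $\cM$ denote the moduli space of stable $\Oc$-valued orthogonal bundles of rank $2n$ with trivial determinant, and let $\pi \colon \cI \to \cM$ be the relative isotropic Quot scheme whose fiber over $V$ is $\IQeo$. By Lemma~\ref{EvenRankExistence}, every such $V$ carries a rank $n$ isotropic subbundle, so $\pi$ is surjective.

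At $(V, E) \in \cI$, the deformation theory of the pair is governed by the subsheaf $\mathfrak{p}(V, E) \subset \wedge^2 V = \mathfrak{so}(V)$ of skew endomorphisms preserving $E$; passing to a local isotropic splitting $V = E \oplus F$ and writing endomorphisms in block form, $\mathfrak{p}(V, E)$ is an extension of $\End(E)$ by $\wedge^2 E$ and fits into the short exact sequence
\[
0 \ \to \ \mathfrak{p}(V, E) \ \to \ \wedge^2 V \ \to \ \wedge^2 E^* \ \to \ 0.
\]
Since $C$ is a curve, $H^2(C, \mathfrak{p}(V, E)) = 0$, so $\cI$ is unobstructed and hence smooth. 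Taking cohomology and using $H^2 = 0$ throughout, the forgetful tangent map
\[
d\pi \colon H^1(C, \mathfrak{p}(V, E)) \ \to \ H^1(C, \wedge^2 V) \ = \ T_V \cM
\]
is surjective if and only if $H^1(C, \wedge^2 E^*) = 0$; equivalently, $\pi$ is smooth at $(V, E)$ precisely when this vanishing holds.

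Applying generic smoothness in characteristic zero, componentwise, to the dominant morphism $\pi$ from the smooth scheme $\cI$, we obtain a dense open $U \subseteq \cM$ over which $\pi$ is a smooth morphism. For every $V \in U$ and every $E \in \IQeo$, $\pi$ is smooth at $(V, E)$, whence $h^1(C, \wedge^2 E^*) = 0$; this proves the first claim. The dimension statements then follow from Proposition~\ref{ExpDimSmoothness}(b): if $I(n, \ell, e) \ge 0$ the fiber $\IQeo$ is smooth of dimension $I(n, \ell, e)$, while if $I(n, \ell, e) < 0$ the vanishing would force the local dimension at any point of $\IQeo$ to equal the negative integer $I(n, \ell, e)$, an impossibility, so $\IQeo$ must be empty.

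The main subtlety is establishing smoothness of $\cI$ and identifying the deformation complex cleanly; conceptually this follows by viewing $\cI$ as the moduli stack of principal $P$-bundles on $C$, where $P \subset \SO(2n)$ is the maximal parabolic stabilizing an $n$-dimensional isotropic subspace, and invoking standard $H^2$-vanishing on curves. Mild care is required to handle non-dominant components of $\cI$ (their images are proper closed in $\cM$ and can simply be excluded from $U$), and to ensure the generic statement is applied to the appropriate connected component of $\cM$ when $\cM$ itself is reducible.
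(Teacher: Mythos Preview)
Your argument is correct for each fixed degree $e$: the total space $\cI_e$ is smooth (obstructions lie in $H^2$, which vanishes on a curve), and generic smoothness applied to $\pi \colon \cI_e \to \cM$ yields a dense open $U_e \subseteq \cM$ over which every $E \in \IQeo$ satisfies $h^1(C, \wedge^2 E^*) = 0$. The minor slip about surjectivity (Lemma~\ref{EvenRankExistence} gives \emph{some} rank $n$ isotropic subbundle, not one of degree $e$) is harmless, since you correctly handle non-dominant components later.

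The genuine gap is that the lemma asserts the vanishing for \emph{all} rank $n$ isotropic subbundles $E$ of a single general $V$, with no restriction on $\deg(E)$. Your construction produces a separate open set $U_e$ for each $e$, and to cover all $E$ you would need $V \in \bigcap_e U_e$. This is only a countable intersection of dense opens, giving a \emph{very general} $V$ rather than a general one; and generic smoothness does not apply directly to the non--finite-type morphism $\bigsqcup_e \cI_e \to \cM$. There is no evident uniform bound forcing $U_e = \cM$ for $e \ll 0$: an isotropic subbundle of very negative degree can still be highly unstable, so $h^0(C, K_C \otimes \wedge^2 E)$ need not vanish automatically.

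The paper avoids this by a different route. One observes that $\wedge^2 E \subset \wedge^2 V = \mathfrak{so}(V)$ is the bundle associated to the nilpotent radical of the parabolic stabilizing $E$. Hence any nonzero section of $K_C \otimes \wedge^2 E$ is a nonzero \emph{nilpotent} Higgs field on the underlying $\SO_{2n}$-bundle, so $V$ is not very stable. Since the very stable locus is a single Zariski open dense subset of $\cM$ (by \cite[Corollary 5.6]{BR}), one obtains the vanishing for all $E$ simultaneously. This packages the countably many conditions into one open set, which your generic-smoothness argument does not.
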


\begin{proof} We follow \cite[p.\ 227]{BR}. Suppose $E \subset V$ is a rank $n$ isotropic subbundle. Let $P$ be a principal $\SO_{2n}$-bundle whose associated vector bundle via the standard representation on $\cc^{2n}$ is $V$. The subbundle $E$ defines a reduction of structure group of $P$ to a maximal parabolic subgroup $Q$ stabilizing a fixed isotropic subspace of dimension $n$ in $\cc^{2n}$. A Lie algebra computation shows that $\wedge^2 E$ is the associated bundle of the nilpotent radical of $\mathrm{Lie} (Q)$. Thus, if $h^1 (C, \wedge^2 E^* ) \ne 0$, then by Serre duality, $h^0 ( C, \Kc \otimes \wedge^2 E ) \ne 0$ and $P$ is not very stable. But then $P$ is not general in moduli, by \cite[Corollary 5.6]{BR}. Then the smoothness statement follows from Lemma \ref{Zariski}. \end{proof}

\section{Orthogonal extensions and principal parts} \label{OrthExt}

Let $F$ be a vector bundle of rank $n$. An extension of vector bundles $0 \to F \to V \to F^* \otimes L \to 0$ will be called an \textsl{orthogonal extension} if $V$ carries an $L$-valued quadratic form with respect to which $F$ is isotropic. Similarly to the symplectic case \cite{CCH1}, an orthogonal extension is a key ingredient in the proof of Theorems \ref{Even} and \ref{Odd}. Here we recall or prove various results on isotropic subbundles of such extensions. 

Many of the proofs in this section are virtually identical to those in \cite{CCH1} of the corresponding statements for symplectic extensions, so we shall sometimes omit details and refer to the corresponding arguments in \cite{CCH1}.

\subsection{Principal parts} Recall that any locally free sheaf $W$ on $C$ has a flasque resolution
\[ 0 \ \to \ W \ \to \ \sRat (W) \ \to \ \sPrin (W) \ \to \ 0 , \]
where $\sRat (W) = W \otimes_{\Oc} \sRat (\Oc)$ is the sheaf of sections of $W$ with finitely many poles, and $\sPrin (W) = \sRat (W)/W$ is the sheaf of principal parts with values in $W$. Taking global sections, we have a sequence of Abelian groups
\begin{equation} 0 \ \to \ H^0 (C, W) \ \to \ \Rat (W) \ \to \ \Prin (W) \ \to \ H^1 (C, W) \ \to \ 0 . \label{cohomseq} \end{equation}
Unpacking the definition of quotient sheaf, a principal part $p \in \Prin (W)$ is determined by an open covering $\{ U_i : 1 \le i \le t \}$ of $C$ together with a rational section $w_i \in \sRat(W) (U_i)$ for each $i$; another choice of $\{ w_i' \}$ determines the same element of $\Prin (W)$ if and only if $w_i - w_i'$ is regular for each $i$. Clearly the cover may be assumed to be finite. Refining it if necessary, we may assume that each $w_i$ has at most one pole. Further refining the cover if necessary, we may choose a local parameter $z_i$ on $U_i$ and a section $\omega_i \in H^0 ( U_i, W)$ such that $w_i = \frac{\omega_i}{z_i^{d_i}}$ represents $p|_{U_i}$. Abusing notation, we shall often write
\[ p \ = \ \frac{\omega_1}{z_1^{d_1}} + \cdots + \frac{\omega_k}{z_t^{d_t}} . \]

\begin{definition} Let $F$ be any bundle of rank $n$. For a principal part
\[ p \ = \ \frac{\omega_1}{z_1^{d_1}} + \cdots + \frac{\omega_m}{z_m^{d_m}} \ \in \ \Prin(L^{-1} \otimes F \otimes F) , \]
the \textsl{transpose} $^tp$ is the principal part represented by $\frac{^t\omega_1}{z_1^{d_1}} + \cdots + \frac{^t\omega_k}{z_m^{d_m}}$. Then $p$ is \textsl{antisymmetric} if $^tp = -p$; equivalently, $^tp_x + p_x$ is regular for all $x$; or $p \in \Prin(L^{-1} \otimes \wedge^2 F)$. 
\end{definition}

Now any $p \in \Prin ( L^{-1} \otimes F \otimes F)$ defines naturally an $\Oc$-module map $F^* \otimes L \to \sPrin (F)$, which we also denote by $p$. Suppose $p$ is an antisymmetric principal part in $\Prin ( L^{-1} \otimes \wedge^2 F )$. Following \cite[Chapter 6]{K1}, we define a sheaf $V^p$ over $C$ by
\begin{equation} V^p (U) \ := \ \{ (f, \varphi) \in \sRat(F) (U) \oplus ( F^* \otimes L )(U) : \overline{f} = p (\varphi) \} . \label{V} \end{equation}
It is not hard to see that $V^p$ is an extension of $F^* \otimes L$ by $F$. Let
\[ \langle \, , \rangle \colon \sRat (F) \oplus \sRat(F^* \otimes L) \ \to \ \sRat (L) \]
be the natural pairing. By an easy computation (see the proof of \cite[Criterion 2.1]{Hit1} for a more general case), the $\sRat(L)$-valued quadratic form
\begin{equation} \sigma \left( ( f_1 , \varphi_1 ) , ( f_2 , \varphi_2 ) \right) \ = \ \langle f_2 , \varphi_1 \rangle + \langle f_1 , \varphi_2 \rangle \label{standardquadform} \end{equation}
on $\sRat(F) \oplus \sRat(F^* \otimes L)$ restricts to a \emph{regular} $L$-valued nondegenerate quadratic form on $V^p$ with respect to which the subsheaf $F$ is isotropic. Thus for each antisymmetric principal part $p \in  \Prin(L^{-1} \otimes \wedge^2 F)$ there is a naturally associated orthogonal extension. We now state a refinement of \cite[Criterion 2.1]{Hit1}, telling us that every orthogonal bundle with a rank $n$ isotropic subbundle can be put into this form.

\begin{lemma} \label{orthext} Let $V$ be an $L$-valued orthogonal bundle of rank $2n$ admitting a rank $n$ isotropic subbundle $F$.
\begin{enumerate}
\item[(a)] There is an isomorphism of orthogonal bundles $\iota \colon V \isom V^p$  for some antisymmetric principal part $p \in \Prin (L^{-1} \otimes \wedge^2 F)$ such that $\iota(F)$ is the natural copy of $F$ in $V^p$ given by $\{ ( f, 0 ) : f \in F \} = V^p \cap \sRat(F)$.
\item[(b)] The class of the extension $0 \to F \to V^p \to F^* \otimes L \to 0$ in $ H^1 ( C, L^{-1} \otimes \wedge^2 F)$ coincides with $[p]$.
\end{enumerate} \end{lemma}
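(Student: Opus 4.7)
The plan is to extract both $p$ and the isomorphism $\iota$ from a single rational Lagrangian complement to $F$ in $V$. Since $F$ is maximal isotropic, $F = F^\perp$, so $\sigma$ identifies $V/F$ with $F^* \otimes L$ and gives the short exact sequence
\[
0 \ \to \ F \ \to \ V \ \xrightarrow{\pi} \ F^* \otimes L \ \to \ 0.
\]
First I will produce a rational splitting $s \colon F^* \otimes L \to V$ of $\pi$ whose image is Lagrangian. Then $p$ will be (up to sign) the principal parts of $s$, viewed as a homomorphism $F^* \otimes L \to F$, and $\iota$ will be defined on opens by $v \mapsto (v - s(\pi(v)), \pi(v))$.

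To construct such an $s$, I start with any rational splitting $s_0$ of $\pi$ regular on some Zariski open $U \subseteq C$; this exists because the extension splits over the generic point (or directly from the orthonormal frames supplied by Lemma \ref{EvenRankExistence}(4)). Writing $s = s_0 + \alpha$ for a rational $\alpha \colon F^* \otimes L \to F$, a short computation using isotropy of $F$ gives
\[
\sigma(s(\varphi_1), s(\varphi_2)) \ = \ \sigma(s_0(\varphi_1), s_0(\varphi_2)) + \langle \varphi_1, \alpha(\varphi_2)\rangle + \langle \varphi_2, \alpha(\varphi_1)\rangle,
\]
so taking $\alpha = -\tfrac{1}{2}\sigma(s_0, s_0)$ under the identification $\Sym^2 F \otimes L^{-1} \subseteq F^{\otimes 2} \otimes L^{-1} = \Hom(F^* \otimes L, F)$ makes $\sigma(s, s) \equiv 0$ on $U$. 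Since $\pi \circ s = \mathrm{id}$ is regular, the principal parts of $s$ automatically lie in $\sPrin(F) \subset \sPrin(V)$ and so define an element $p \in \Prin(L^{-1} \otimes F \otimes F)$. Comparing the leading poles in the identity $\sigma_V(s(\varphi_1), s(\varphi_2)) \equiv 0$ at each pole of $s$ then forces $p + {}^tp = 0$, so $p \in \Prin(L^{-1} \otimes \wedge^2 F)$.

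With $p$ in hand, for every $v \in V$ on a small open the section $f := v - s(\pi(v))$ lies in $\sRat(F)$ with $\overline f = -\overline{s(\pi(v))} = p(\pi(v))$, so $(f, \pi(v)) \in V^p$ in the sense of \eqref{V}. The resulting $\iota$ is $\Oc$-linear, sends the subsheaf $F \subset V$ to the natural copy $\{(f, 0)\} \subset V^p$, and — using isotropy of $F$ and the Lagrangian condition on $s$ — satisfies
\[
\sigma_V(v_1, v_2) \ = \ \langle f_1, \varphi_2\rangle + \langle f_2, \varphi_1\rangle,
\]
so it intertwines $\sigma$ with the form \eqref{standardquadform} on $V^p$. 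This establishes (a). For (b), the bundle $V^p$ carries a tautological rational splitting $\varphi \mapsto (s^p(\varphi), \varphi)$, where $s^p$ is any rational lift of $p$ to $\sRat(F)$; its principal parts are by construction $p$, and under the identification $H^1(C, L^{-1} \otimes \wedge^2 F) \cong \Prin(L^{-1} \otimes \wedge^2 F)/\Rat(L^{-1} \otimes \wedge^2 F)$ coming from \eqref{cohomseq}, this identifies the class of the extension with $[p]$.

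The main obstacle I expect is producing the rational Lagrangian splitting, since this is the step using the orthogonal structure of $V$ nontrivially; the remainder is essentially formal manipulation with $V^p$. Solvability of $\alpha + {}^t\alpha = -\sigma(s_0, s_0)$ is automatic in characteristic zero, but one must carefully verify that the resulting principal parts $p$ land in $\wedge^2 F$ rather than all of $F^{\otimes 2}$, and track sign conventions between the splitting, its transpose, and the pairing \eqref{standardquadform}.
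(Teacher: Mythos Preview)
Your argument is correct and follows essentially the same idea as the paper's proof (which in turn follows \cite[Lemma 3.1]{CCH1}): produce a rational isotropic complement to $F$ by correcting an arbitrary rational splitting $s_0$ by $-\tfrac{1}{2}\sigma(s_0,s_0)$, and take $p$ to be the principal part of the resulting Lagrangian splitting. Your organization is in fact slightly more direct than the paper's, since you work with $V$ itself rather than first identifying $V$ with an auxiliary model $V^{p'}$ for a not-necessarily-antisymmetric $p'$ and then correcting; this lets you avoid the extra automorphism $A \in \Aut(F)$ that appears in the paper's computation. One small point: the phrase ``comparing the leading poles'' is a little loose; the clean way to see $p + {}^t p = 0$ is to choose a \emph{regular} local splitting $\tilde s$ near each pole, write $s = \tilde s + \beta$, and observe that $\sigma(s,s)=0$ forces $\beta + {}^t\beta$ to equal the regular quantity $-\sigma(\tilde s,\tilde s)$, hence to have trivial principal part.
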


\begin{proof} This is identical to the proof of \cite[Lemma 3.1]{CCH1} up to changes of sign. \end{proof}

\subsection{Maximal rank isotropic subbundles of a fixed extension}

From (\ref{V}), we obtain a splitting $\Rat (V) = \Rat (F) \oplus \Rat( F^* \otimes L )$. This is a vector space of dimension $\rank(V)$ over the field $K(C)$ of rational functions on $C$. If $\beta \in \Rat ( \Hom ( F^* \otimes L , F ))$, we write $\Gamma_{\beta}$ for the graph of the induced map of $K(C)$-vector spaces $\Rat (F^* \otimes L) \to \Rat (F)$. We denote by $\underline{\Gamma_\beta}$ the associated $\Oc$-submodule of $\sRat (F) \oplus \sRat (F^* \otimes L)$.

\begin{proposition} Let $p \in \Prin ( L^{-1} \otimes \wedge^2 F )$ be any antisymmetric principal part. Let $V^p$ be as in (\ref{V}). \label{cohomlifting} \begin{enumerate} 
\item[(a)] There is a bijection between the $K(C)$-vector space $\Rat \left( L^{-1} \otimes \wedge^2 F \right)$ and the set of rank $n$ isotropic subbundles $E \subset V^p$ with $\rank (E \cap F) = 0$. The bijection is given by $\beta \mapsto \underline{\Gamma_\beta} \cap V^p$. The inverse map sends a rank $n$ isotropic subbundle $E$ to the uniquely determined $\beta \in \Rat( L^{-1} \otimes \wedge^2 F)$ satisfying $\Rat(E) = \Gamma_{\beta}$.
\item[(b)] If $E = \underline{\Gamma_\beta} \cap V^p$, then projection to $F^* \otimes L$ gives an isomorphism of sheaves
\[ E \ \isom \ \Ker \left( (p - \overline{\beta}) \colon F^* \otimes L \to \sPrin (F) \right) . \]
Note that $\left[ p - \overline{\beta} \right] = [p] $ is the class of $V^p$ in $H^1(C, L^{-1} \otimes \wedge^2 F)$.
\item[(c)] For a fixed $p - \overline{\beta} \in \Prin ( L^{-1} \otimes \wedge^2 F )$, the set of rank $n$ isotropic subbundles $\underline{\Gamma_{\beta'}} \cap V^p$ with $\overline{\beta'} = \overline{\beta}$ is a torsor over $H^0 ( C, L^{-1} \otimes \wedge^2 F)$. In particular, it is nonempty.
 \end{enumerate} \end{proposition}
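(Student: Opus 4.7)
The plan is to prove the three parts of Proposition \ref{cohomlifting} by direct computation using the explicit description (\ref{V}) of $V^p$ and the canonical pairing $\langle\,,\rangle \colon \sRat(F) \oplus \sRat(F^* \otimes L) \to \sRat(L)$, together with the cohomology sequence (\ref{cohomseq}).

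For part (a), I first verify that for antisymmetric $\beta \in \Rat(L^{-1} \otimes \wedge^2 F)$, the graph $\Gamma_\beta \subset \Rat(F) \oplus \Rat(F^* \otimes L)$ is an $n$-dimensional $K(C)$-subspace that is isotropic for the rational extension of $\sigma$: the pairing of $(\beta(\varphi_1), \varphi_1)$ with $(\beta(\varphi_2), \varphi_2)$ reduces via (\ref{standardquadform}) to $\langle \beta(\varphi_2), \varphi_1\rangle + \langle \beta(\varphi_1), \varphi_2\rangle$, which vanishes precisely by antisymmetry of $\beta$ viewed as a map $F^* \otimes L \to F$. The intersection $E := \underline{\Gamma_\beta} \cap V^p$ is then a coherent, torsion-free, hence locally free subsheaf of $V^p$; clearing denominators shows that its sheaf of rational sections equals $\Gamma_\beta$, so $\rank(E) = n$, and isotropy at the sheaf level is inherited from $\Gamma_\beta$. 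The condition $\rank(E \cap F) = 0$ is immediate, since $(f, 0) \in \Gamma_\beta$ forces $\varphi = 0$ and hence $f = \beta(0) = 0$. For the inverse map, given a rank $n$ isotropic $E$ with $\rank(E \cap F) = 0$, the projection $V^p \to F^* \otimes L$ induces a $K(C)$-linear injection, hence an isomorphism by dimension count, $\Rat(E) \isom \Rat(F^* \otimes L)$; composing its inverse with projection to $\Rat(F)$ defines the desired $\beta \in \Rat(\Hom(F^* \otimes L, F)) = \Rat(L^{-1} \otimes F \otimes F)$, and isotropy of $E$ translates into antisymmetry of $\beta$. Mutual inverseness of the two constructions is clear from their definitions.

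For part (b), unwrapping (\ref{V}) shows that $(\beta(\varphi), \varphi) \in \Gamma_\beta$ belongs to $V^p(U)$ over an open $U$ if and only if $\overline{\beta(\varphi)} = p(\varphi)$ as principal parts on $U$, equivalently $(p - \overline{\beta})(\varphi) = 0$. Projection to $F^* \otimes L$ is therefore an $\Oc$-module isomorphism of $E = \underline{\Gamma_\beta} \cap V^p$ onto $\Ker(p - \overline{\beta})$. The identity $[p - \overline{\beta}] = [p]$ in $H^1(C, L^{-1} \otimes \wedge^2 F)$ is immediate from (\ref{cohomseq}), since $\overline{\beta}$ lies in the image of $\Rat \to \Prin$.

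For part (c), another application of (\ref{cohomseq}) shows that two elements $\beta, \beta' \in \Rat(L^{-1} \otimes \wedge^2 F)$ satisfy $\overline{\beta'} = \overline{\beta}$ if and only if $\beta' - \beta \in \Ker(\Rat \to \Prin) = H^0(C, L^{-1} \otimes \wedge^2 F)$; nonemptiness is witnessed by $\beta$ itself. The one genuinely delicate point in the whole argument lies in (a): ensuring that $\underline{\Gamma_\beta} \cap V^p$ is not merely a coherent subsheaf but a \emph{saturated} rank $n$ subbundle of $V^p$, so that the correspondence $\beta \leftrightarrow E$ actually lands in the set of isotropic subbundles. This saturation, however, is automatic from the construction, since any section of $V^p$ whose image in $\sRat(V^p)/\underline{\Gamma_\beta}$ is torsion must already lie in $\Gamma_\beta \cap V^p$.
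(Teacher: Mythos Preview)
Your proof is correct and follows essentially the same direct-computation approach as the paper's (which defers to \cite{Hit1} and \cite{CCH1}, but whose underlying argument is the one you have written out). Your verification of isotropy of $\Gamma_\beta$ via antisymmetry, the identification $E \cong \Ker(p - \overline{\beta})$ from the defining condition (\ref{V}), the saturation argument, and the torsor structure from (\ref{cohomseq}) all match the intended reasoning; the only cosmetic difference is that you spell out the saturation of $\underline{\Gamma_\beta} \cap V^p$ explicitly, whereas the paper absorbs this into the cited references.
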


\begin{proof} Up to changes of sign, parts (a), (b) and (c) are proven identically to \cite[Theorem 3.3 (i) and (iii)]{Hit1} and 
 \cite[Proposition 3.2 (c)]{CCH1} respectively. \end{proof}

The following refinement of Lemma \ref{orthext} allows us to choose convenient coordinates on $V$.

\begin{lemma} Let $F$ and $E$ be rank $n$ isotropic subbundles of $V$ such that $\rank (F \cap E) =  0$. Then there exists a antisymmetric principal part $p_0 \in \Prin ( L^{-1} \otimes \wedge^2 F)$ and an isomorphism of orthogonal bundles $\iota \colon V \isom V^{p_0}$, such that
\[ \iota(E) \ = \ \underline{\Gamma_0} \cap V^{p_0} \ = \ \{0\} \oplus \Ker (p_0) , \]
where $\Gamma_0 = \{ 0 \} \oplus \Rat (F^* \otimes L)$ is the graph of the zero map $\Rat (F^* \otimes L) \to \Rat ( F )$. \label{GoodCoords} \end{lemma}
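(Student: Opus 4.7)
The plan is to combine Lemma~\ref{orthext} and Proposition~\ref{cohomlifting} with a shearing change of coordinates. First I would apply Lemma~\ref{orthext}(a) to $F$, obtaining an antisymmetric $p \in \Prin(L^{-1} \otimes \wedge^2 F)$ and an orthogonal isomorphism $V \isom V^p$ identifying $F$ with the canonical copy $V^p \cap \sRat(F)$. Under this identification, $E$ becomes a rank $n$ isotropic subbundle of $V^p$ with $\rank(F \cap E) = 0$, so Proposition~\ref{cohomlifting}(a) yields a unique $\beta \in \Rat(L^{-1} \otimes \wedge^2 F)$ with $E = \underline{\Gamma_\beta} \cap V^p$. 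The remaining task is to construct an orthogonal isomorphism taking $\underline{\Gamma_\beta}$ to $\underline{\Gamma_0}$.

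For this, I would use the natural shear of $\sRat(F) \oplus \sRat(F^* \otimes L)$ defined by $(f, \phi) \mapsto (f - \beta(\phi), \phi)$, with inverse $(f', \phi) \mapsto (f' + \beta(\phi), \phi)$. Setting $p_0 := p - \overline{\beta}$, a principal-part check gives $\overline{f - \beta(\phi)} = p(\phi) - \overline{\beta}(\phi) = p_0(\phi)$, so the shear restricts to a sheaf isomorphism $V^p \isom V^{p_0}$. The principal part $p_0$ is antisymmetric because $p$ is antisymmetric and $\overline{\beta}$ lies in $\Prin(L^{-1} \otimes \wedge^2 F)$ by construction of $\beta$.

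The key nontrivial verification is that the shear preserves the quadratic form~(\ref{standardquadform}). Computing, the pulled-back form differs from $\sigma$ by
\[
- \langle \beta(\phi_2), \phi_1 \rangle - \langle \beta(\phi_1), \phi_2 \rangle,
\]
which vanishes identically precisely because $\beta$ is antisymmetric. Thus the shear is an isometry $V^p \isom V^{p_0}$; by construction it sends $\underline{\Gamma_\beta} \cap V^p$ to $\underline{\Gamma_0} \cap V^{p_0} = \{0\} \oplus \Ker(p_0)$, and it fixes the canonical copy of $F$. Composing with the isomorphism of the first step produces the required $\iota$.

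The only real obstacle lies in the two antisymmetry checks: both the well-definedness of $V^{p_0}$ and the isometry property hinge on $\beta$ taking values in $\wedge^2 F$ rather than the larger $F \otimes F$. Beyond that, the argument is a routine matter of tracking principal parts under a linear change of coordinates.
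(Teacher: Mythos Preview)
Your proposal is correct and follows essentially the same approach as the paper's proof: apply Lemma~\ref{orthext} and Proposition~\ref{cohomlifting} to write $E = \underline{\Gamma_\beta} \cap V^p$, then use the shear $(f,\phi) \mapsto (f - \beta(\phi), \phi)$ to carry $V^p$ to $V^{p_0}$ with $p_0 = p - \overline{\beta}$, checking via the antisymmetry of $\beta$ that this is an isometry for the form~(\ref{standardquadform}).
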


\begin{proof} This is identical to \cite[Lemma 3.4]{CCH1} up to changes of sign. \end{proof}

\section{Components of isotropic Quot schemes in even rank} \label{EvenRankComponents}

In this section, we give a first description of the irreducible components of $\IQe$. We shall say that a locus in $\IQe$ is \textsl{saturated} if its generic point represents a subbundle of $V$, and \textsl{nonsaturated} otherwise.

\subsection{Nonsaturated loci in even rank}

Let $V$ be an $L$-valued orthogonal bundle of rank $2n$. For any $E_1 \in \IQ_{e_1}^\circ (V)$ and $e < e_1$, any $[E \to E_1] \in \Elm^{e_1 - e} (E_1)$ defines a point $[E \to E_1 \to V]$ of $\IQe$. However, the next theorem and corollary show that in general such an $[E \to V]$ does not lie in the closure $\overline{\IQeo}$ of the saturated locus.

\begin{theorem} \label{ComponentsOfIQe} Let $V$ be an $L$-valued orthogonal bundle of rank $2n \ge 4$.
\begin{enumerate}
\item[(a)] Suppose that $e < e_1$ and $\IQ_{e_1}^\circ (V)$ is nonempty. For each irreducible component $Y$ of $\IQ^\circ_{e_1} (V)$, let $Y ( e_1 , e ) \subseteq \IQe$ be the sublocus consisting of rank $n$ isotropic subsheaves whose saturations define points of $Y$. Then $Y( e_1, e )$ is irreducible and $\dim Y(e_1, e) = \dim (Y) + n(e_1 - e)$. Moreover, if $Y$ contains a smooth reduced point, then $\overline{Y ( e_1, e )}$ is an irreducible component of $\IQe$ which is generically smooth.
\item[(b)] The map $Y ( e_1, e ) \to Y$ defined by $E \mapsto \bE$ is projective.
\item[(c)] If $Y$ and $Y'$ are generically smooth irreducible components of $\IQ^\circ_{e_1} (V)$, the corresponding components $Y ( e_1, e )$ and $Y' (e_1, e)$ are same if and only if $Y' = Y$.
\end{enumerate} \end{theorem}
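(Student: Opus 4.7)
The plan is to realize $Y(e_1, e)$ as the image of a projective fibration over $Y$ with irreducible fibers, and then identify it as an irreducible component of $\IQe$ via a tangent space computation at a generic point.

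First, form the incidence scheme $Z := \{ ([\bE \to V], [E \to \bE]) : [\bE \to V] \in Y,\ [E \to \bE] \in \Elm^{e_1-e}(\bE) \}$, which is naturally a relative Quot scheme over $Y$ (built from the universal subsheaf on $C \times Y$). The projection $\pi_1 \colon Z \to Y$ is a projective morphism whose fiber over $[\bE \to V]$ is $\Elm^{e_1-e}(\bE)$. This fiber is the classical Quot scheme of length-$(e_1-e)$ torsion quotients of a rank $n$ locally free sheaf on $C$, which is irreducible of dimension $n(e_1-e)$ (generic points parameterizing sums of $e_1-e$ distinct codimension-one modifications). Hence $Z$ is irreducible of dimension $\dim Y + n(e_1-e)$. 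The map $Z \to \IQe$ sending $(\bE, E) \mapsto [E \to V]$ is set-theoretically injective, since $\bE$ is intrinsic to $E$, and has image exactly $Y(e_1, e)$. This establishes the irreducibility and dimension count in (a), and identifies the saturation map in (b) with $\pi_1$, giving projectivity.

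For the component and generic smoothness in (a), pick a general $[E \to V] \in Y(e_1, e)$, so that $\bE$ is a smooth reduced point of $Y$ lying on no other component, and $T := \bE/E \cong \bigoplus_{i=1}^{e_1-e} \mathcal{O}_{x_i}$ for distinct $x_i \in C$. By Lemma \ref{Zariski} and Proposition \ref{ExpDimSmoothness}, smooth reducedness of $\bE$ yields $\dim Y = h^0(C, \wedge^2 \bE^* \otimes L)$. I aim to prove the upper bound
\[ \dim T_E \IQe \ \le \ h^0(C, \Hom(E, T)) + h^0(C, \wedge^2 \bE^* \otimes L) \ = \ n(e_1-e) + \dim Y \ = \ \dim Y(e_1, e) . \]
The reverse inequality $\dim T_E \IQe \ge \dim_E Y(e_1, e) = \dim Y(e_1, e)$ is automatic, so equality then forces $\IQe$ to be smooth at $E$, showing $\overline{Y(e_1, e)}$ is a generically smooth irreducible component.

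To establish the upper bound, I apply Lemma \ref{Zariski}. The subspace $H^0(C, \Hom(E, T))$ lies in the kernel of the map $c$ appearing in the diagram of Lemma \ref{Zariski}, hence in $T_E \IQe$, and $T_E \IQe / H^0(C, \Hom(E, T))$ injects into $H^0(C, \Hom(E, \bE^* \otimes L))$. The antisymmetry condition $c(\alpha) \in H^0(C, \wedge^2 E^* \otimes L)$ confines this image to $H^0(C, \mathcal{K} \otimes L)$, where $\mathcal{K} := (E^* \otimes \bE^*) \cap \wedge^2 E^*$ taken inside $E^* \otimes E^*$. The key technical step is the sheaf identity $\mathcal{K} = \wedge^2 \bE^*$. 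The inclusion $\wedge^2 \bE^* \subseteq \mathcal{K}$ is immediate, and both are locally free subsheaves of $\wedge^2 E^*$ of rank $\binom{n}{2}$. A direct local computation near each $x_i$, using a basis $e_1^*, \dots, e_n^*$ of $E^*$ with $e_2^*, \dots, e_n^*$ extending a basis of $\bE^*|_{x_i}$, shows that at the fiber only wedges among $e_2^*, \dots, e_n^*$ avoid having a component with first factor outside $\bE^*$; consequently both $\wedge^2 E^* / \wedge^2 \bE^*$ and $\wedge^2 E^* / \mathcal{K}$ are skyscraper sheaves of total length $(n-1)(e_1-e)$. Since the two sub-sheaves have equal degrees and one contains the other, they coincide.

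For (c), the saturation map $Y(e_1, e) \to Y$ is surjective (each $\bE \in Y$ lifts via some element of $\Elm^{e_1-e}(\bE)$), so $Y$ is recovered from $Y(e_1, e)$ as the image of this map. Hence $Y(e_1, e) = Y'(e_1, e)$ implies $Y = Y'$, and the converse is trivial. The principal obstacle in the whole argument is the sheaf identification $\mathcal{K} = \wedge^2 \bE^*$, which crucially relies on $T$ being a sum of simple skyscrapers at distinct points; without this genericity assumption on $E$, the local analysis at the support of $T$ would become considerably more intricate.
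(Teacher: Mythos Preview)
Your proof is correct and follows essentially the same approach as the paper's: both realize $Y(e_1,e)$ via the relative Quot scheme over $Y$, and both establish generic smoothness by computing $T_E \IQe$ at a general point using Lemma~\ref{Zariski} and showing that the relevant sheaf $(E^* \otimes \bE^*) \cap \wedge^2 E^*$ equals $\wedge^2 \bE^*$ via a local computation at the (reduced) support of $T$. One small slip: in your local analysis you should say ``second factor outside $\bE^*$'' rather than ``first factor'', since $\mathcal{K}$ is cut out by the condition that $\omega \in E^* \otimes \bE^*$; this does not affect the argument.
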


\begin{proof} (a) Let $\cE$ be the restriction of the universal subsheaf of $\pi_C^* V \to \IQ^\circ_{e_1} (V) \times C$ to $Y \times C$. As $\cE$ is a family of sheaves parameterized by $Y$, the relative Quot scheme
\begin{equation} \cElm^{e_1 - e} ( \cE ) \ := \ \cQ uot_{n, e} ( \cE ) \ \to \ Y \label{RelativeQuot} \end{equation}
parameterizes rank $n$ isotropic subsheaves of $V$ of degree $e$ whose saturations define points of $Y$. By the universal property of Quot schemes, we obtain a morphism $\cElm^{e_1 - e} ( \cE ) \to \IQe$, which is clearly injective
 with image $Y ( e_1, e )$. Therefore,
\[ \dim \left( Y ( e_1, e ) \right) \ = \ \dim (  \cElm^{e_1 - e} ( \cE ) ) \ = \ \dim (Y) + n(e_1-e) . \]
Since the base and the fibers of $\cElm^{e_1-e} ( \cE ) \to Y$ are irreducible, so is $Y( e_1, e )$. 

Let us now show that $Y ( e_1, e )$ is generically smooth and reduced, and moreover is not contained in a component of higher dimension. All of this will follow if we can show that $\dim \left( T_E \IQe \right) = \dim \left( Y ( e_1, e ) \right)$ at a general point $E \in Y ( e_1, e )$.

Consider, then, an elementary transformation $0 \to E \to E_1 \to T \to 0$ which is general in the sense that $E_1$ is a smooth, reduced point of $\IQ_{e_1}^\circ (V)$ and $T \cong \cO_D$ for a reduced divisor $D$ of degree $e_1 - e$. As before, by dualizing we obtain $0 \to E_1^* \otimes L \to E^* \otimes L \xrightarrow{b} T' \to 0$ where $T'$ is noncanonically isomorphic to $T$. 

By Lemma \ref{Zariski}, the tangent space $T_E \IQe$ is the preimage of $H^0 ( C, \wedge^2 E^* \otimes L )$ by the map $c$ in the diagram
\[ \xymatrix{ H^0 ( C, E^* \otimes T ) \ar@{^{(}->}[r] & H^0 ( C, E^* \otimes V/E ) \ar@{->>}[r]^-a \ar[dr]_c & H^0 ( C, E^* \otimes (E_1^* \otimes L) ) \ar[d] \\
 & & H^0 ( C, E^* \otimes (E^* \otimes L) ) \ar[d]^{\Iden_{E^*} \otimes b} \\
 & & H^0 ( C, E^* \otimes T'). } \]
As $c$ factorizes via $H^0 ( C, E^* \otimes E_1^* \otimes L ) )$ and $a$ is surjective, there is an exact sequence
\[ 0 \to H^0 ( C, E^* \otimes T)) \to T_E \IQe \to \Pi \to 0 , \]
where
\begin{multline*} \Pi \ = \ H^0 ( C, E^* \otimes E_1^* \otimes L ) ) \cap H^0 ( C, \wedge^2 E^* \otimes L ) \\
 = \ \Ker \left( H^0 ( C, \wedge^2 E^* \otimes L ) \xrightarrow{\Iden_{E^*} \otimes b} H^0 ( C, E^* \otimes T' ) \right) . \end{multline*}
 
Let us describe the kernel of the sheaf map
\[
\Iden_{E^*} \otimes b \colon \wedge^2 E^* \otimes L \ \to \ E^* \otimes T' .
\]
At a point $x \in \Supp (T)$, let $\varphi_1 , \ldots , \varphi_n$ be a local basis for $E^*$ such that the subsheaf $E_1^*$ is given by $\varphi_1, \ldots , \varphi_{n-1}, z\varphi_n$ for a local parameter $z$ at $x$. Let $l$ be a local generator for $L$. Then $T'_x$ is generated by $b(\varphi_n \otimes l)$, and $\Ker \left( b_x \colon E^* \otimes L |_x \to T'_x \right)$ is spanned by $\varphi_1 \otimes l , \ldots , \varphi_{n-1} \otimes l$. For $1 \le i \le n-1$, we have
\[ (\Iden_{E^*} \otimes b) ( (\varphi_i \wedge \varphi_n) \otimes l ) \ = \ \varphi_i \otimes b(\varphi_n \otimes l) - \varphi_n \otimes b(\varphi_i \otimes l ) \ = \ \varphi_i \otimes b (\varphi_n \otimes l ) . \]
Hence $(\Iden_{E^*} \otimes b)( \wedge^2 E^* \otimes L )$ contains at least $n-1$ independent elements of $E^*|_x \otimes T'_x$ at each of the $e_1-e$ points $x \in \Supp (T')$. Therefore,
\[
 \deg \left( \Ker \left(\Iden_{E^*} \otimes b \right) \right) \ \le \ \deg (\wedge^2 E^* \otimes L ) - (n-1)(e-e_1) \ = \ \deg ( \wedge^2 E_1^* \otimes L ) . 
 \]
Since clearly $\wedge^2 E_1^* \otimes L$ is contained in $\Ker \left(  \Iden_{E^*} \otimes b \right)$, they must be equal. 
 Therefore 
\[ \dim ( \Pi ) \ = \ h^0 (C, \wedge^2 E_1^* \otimes L ) \ = \ \dim ( T_{E_1} \IQ_{e_1}^\circ ( V ) ) . \]
As we have supposed $Y$ to be smooth at $E_1$, this coincides with $\dim (Y)$. Thus, as desired,
\[ \dim T_E \IQe \ = \ \dim (Y) + n(e_1 - e) \ = \ \dim ( Y ( e_1, e ) ) . \]


(b) This follows from the projectivity of $\cQuot_{n, e} ( \cE )$ over $Y$ (see e.g.\ \cite[Theorem 4.4.1]{Sern06}).

(c) If $Y(e_1, e) = Y'(e_1, e)$, then we get $Y = Y'$ by the saturation process. 
\end{proof}

\begin{corollary} \label{GeneralPathology} Let $V$ be an orthogonal bundle of rank $2n$. Then for $e_1 > e$, all nonempty components of $\IQe$ of the form $Y ( e_1, e )$ have dimension strictly larger than $I ( n, \ell, e)$. \end{corollary}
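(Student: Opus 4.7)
The plan is to combine the dimension formula of Theorem \ref{ComponentsOfIQe}(a) with the lower bound on the dimension of components of $\IQ_{e_1}^\circ(V)$ from Proposition \ref{ExpDimSmoothness}(a), and then observe that a direct numerical comparison gives the strict inequality.

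First, by Theorem \ref{ComponentsOfIQe}(a), any nonempty component of the form $Y(e_1,e)$ satisfies
\[ \dim Y(e_1, e) \ = \ \dim (Y) + n(e_1 - e), \]
where $Y$ is an irreducible component of $\IQ^\circ_{e_1}(V)$. By Proposition \ref{ExpDimSmoothness}(a), we have $\dim(Y) \ge I(n, \ell, e_1)$. Combining these,
\[ \dim Y(e_1, e) \ \ge \ I(n, \ell, e_1) + n(e_1 - e). \]

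Next, I would compute the difference $I(n, \ell, e_1) - I(n, \ell, e)$ using the explicit formula $I(n, \ell, e) = -(n-1)e - \frac{1}{2}n(n-1)(g-1-\ell)$; since $I(n,\ell,\cdot)$ is linear in $e$ with slope $-(n-1)$, we get $I(n, \ell, e_1) - I(n, \ell, e) = -(n-1)(e_1 - e)$. Substituting this gives
\[ \dim Y(e_1, e) - I(n, \ell, e) \ \ge \ -(n-1)(e_1 - e) + n(e_1 - e) \ = \ e_1 - e. \]
Since $e_1 > e$ by hypothesis, $e_1 - e \ge 1 > 0$, and the strict inequality follows immediately.

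There is essentially no obstacle here: the corollary is a purely numerical consequence of the two preceding results. The only thing worth highlighting is the slope comparison, namely that the dimension of the fiber of the saturation map grows with slope $n$ as $e$ decreases, while the expected dimension $I(n,\ell,e)$ grows only with slope $n-1$. This mismatch is exactly reason (iii) pointed out in the introduction, and it is what forces the nonsaturated components $Y(e_1,e)$ to be genuinely larger than the expected dimension, explaining why $\IQe$ fails to be equidimensional for $e \ll 0$.
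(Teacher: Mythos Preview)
Your proof is correct and follows essentially the same approach as the paper: combine the dimension formula from Theorem \ref{ComponentsOfIQe}(a) with the lower bound $\dim(Y) \ge I(n,\ell,e_1)$ from Proposition \ref{ExpDimSmoothness}(a), then do the numerical comparison $I(n,\ell,e_1) + n(e_1-e) = I(n,\ell,e) + (e_1-e)$. The paper writes this as a single equality rather than separating out the slope computation, but the content is identical.
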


\begin{proof} For $e_1 > e$, each nonempty component $Y$ of $\IQ_{e_1}^\circ (V)$ has dimension at least $I(n, \ell, e_1)$. By Theorem \ref{ComponentsOfIQe} (a), therefore,
\[ \dim Y(e_1, e) \ \ge \ I(n, \ell, e_1) + n(e_1 - e) \ = \ I(n,\ell, e) + e_1 - e . \]
As $e_1 > e$, this is strictly greater than $I(n, \ell, e)$. \end{proof}

\subsection{The closure of the saturated locus}

Now we prove Theorem \ref{TypeT} (a), which gives a necessary condition for $[E \to V] \in \IQe$ to be a limit of points in $\IQeo$. According to Theorem \ref{TypeT} (b), it is also a sufficient condition for $e$ sufficiently small, but the proof of the latter will be postponed to \S\:6. 

Recall the notion of torsion sheaves \textsl{of type} $\cT$ given in Definition \ref{TorsionTypeT}. Furthermore, if $F$ is any locally free sheaf, for $t \ge 1$, we denote by $\cT^{2t} ( F )$ the sublocus of $\Elm^{2t} (F)$ of those $[ E \to F ]$ such that $F/E$ is of type $\cT$.

\begin{lemma} \label{TypeTParameterSpace} Let $F$ be a bundle of rank $n \ge 2$.
\begin{enumerate}
\item[(a)] The locus $\cT^{2t} (F)$ is closed and irreducible in $\Elm^{2t} ( F )$.
\item[(b)] A general $E \in \cT^{2t} (F)$ satisfies $F/E \cong \cO_D \otimes \cc^2$ for some reduced effective divisor $D$, which may be taken to be general in $C^{(t)}$.
\item[(c)] The dimension of $\cT^{2t} ( F )$ is $t ( 2n - 3 )$.
\end{enumerate} \end{lemma}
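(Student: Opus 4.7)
My plan is to realize $\cT^{2t}(F)$ as the image of a proper morphism from an iterated Hecke-type flag variety $\cH^t$ whose structure as a tower of Grassmann bundles makes all three claims nearly automatic. I will define $\cH^t$ to parameterize tuples $(F = F_0 \supset F_1 \supset \cdots \supset F_t;\: x_1, \ldots, x_t)$ with $x_i \in C$ and $F_{i-1}/F_i \cong \cO_{x_i} \otimes \cc^2$ for each $i$. Forgetting the last step realizes $\cH^t$ as a fiber bundle over $\cH^{t-1} \times C$ whose fiber over $((F_\bullet', x_\bullet'), x_t)$ is the Grassmannian $\Gr(n-2, F_{t-1}'|_{x_t})$ of codimension-two subspaces of $F_{t-1}'|_{x_t}$. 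Since every $F_{t-1}'$ remains locally free of rank $n$, these fibers have uniform dimension $2(n-2)$; starting from $\cH^0 = \mathrm{pt}$, I conclude inductively that $\cH^t$ is irreducible, projective over $C^t$, and of dimension $t(1 + 2(n-2)) = t(2n-3)$.

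Next I will use the tautological subsheaf $F_t \hookrightarrow F$ on $\cH^t \times C$ to define $\pi\colon \cH^t \to \Elm^{2t}(F)$ by $(F_\bullet, x_\bullet) \mapsto [F_t \hookrightarrow F]$, and check that its image equals $\cT^{2t}(F)$: the inclusion $\pi(\cH^t) \subseteq \cT^{2t}(F)$ is immediate from the chain $F \supset F_1 \supset \cdots \supset F_t$, while the reverse inclusion is the very definition of type $\cT$ applied to $F/E$. Since $\cH^t$ is projective and $\Elm^{2t}(F)$ is separated, $\pi$ is proper, so $\cT^{2t}(F) = \pi(\cH^t)$ is closed and, being the image of an irreducible scheme, irreducible. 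This establishes (a).

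For (b) and (c) I will restrict to the nonempty (hence dense) open $\cH^{t,\circ} \subseteq \cH^t$ where the $x_i$ are pairwise distinct. Over such a tuple $F/F_t$ splits canonically as $\bigoplus_i \cO_{x_i} \otimes \cc^2$, and any type $\cT$ filtration of this direct sum must peel off the summands one at a time, so the fiber of $\pi|_{\cH^{t,\circ}}$ over $[F_t \hookrightarrow F]$ consists of exactly $t!$ orderings. Hence $\pi|_{\cH^{t,\circ}}$ is quasi-finite, and properness of $\pi$ makes its image dense in $\cT^{2t}(F)$, yielding
\[ \dim \cT^{2t}(F) \ = \ \dim \pi(\cH^{t,\circ}) \ = \ \dim \cH^{t,\circ} \ = \ t(2n-3), \]
which is (c). A general $E \in \cT^{2t}(F)$ then lies in $\pi(\cH^{t,\circ})$, so $F/E \cong \cO_D \otimes \cc^2$ for a reduced $D = x_1 + \cdots + x_t$; moreover, the composition $\cH^{t,\circ} \to C^{(t)}$ sending $(F_\bullet, x_\bullet)$ to $x_1 + \cdots + x_t$ is surjective onto the reduced locus, so $D$ can in addition be chosen general in $C^{(t)}$, which is (b).

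The step that will demand the most care is setting up $\cH^t$ rigorously: namely, verifying that the inductive Grassmann-bundle structure is uniform (which rests on each $F_{t-1}$ being locally free of rank $n$ at $x_t$, so that $\Gr(n-2, F_{t-1}|_{x_t})$ has constant dimension) and that the tautological family over $\cH^t \times C$ genuinely factors through $\Elm^{2t}(F)$. Once this scaffolding is in place, the properness of $\pi$ and the quasi-finiteness on $\cH^{t,\circ}$ make the dimension and irreducibility claims purely formal consequences.
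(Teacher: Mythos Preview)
Your proposal is correct and follows essentially the same approach as the paper: both construct a tower of Grassmann bundles parameterizing chains $F = F_0 \supset F_1 \supset \cdots \supset F_t$ with $F_{i-1}/F_i \cong \cO_{x_i} \otimes \cc^2$, then use projectivity and irreducibility of this tower to deduce (a), and pass to the locus over distinct points for (b) and (c). The paper sketches the tower by reference to \cite[Lemma 4.2]{CH1} (replacing $\pp \cF_k^*$ by $\Gr(2,\cF_k^*)$) and computes the dimension via the bijection with unordered $t$-tuples in $\Gr(2,F)$ over distinct points of $C$, whereas you compute it via quasi-finiteness of $\pi|_{\cH^{t,\circ}}$; these are equivalent formulations of the same count.
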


\begin{proof} (a) Let $0 \to E \to F \to \tau \to 0$ be an elementary transformation as above. By hypothesis, there is a filtration
\[ 0 \ = \ \tau_0 \ \subset \ \tau_1 \ \subset \ \cdots \ \subset \ \tau_{t-1} \ \subset \ \tau_t \ = \ \tau \]
where $\tau_i / \tau_{i-1} = \cO_{x_i} \otimes \cc^2$ for $1 \le i \le t$. Denoting by $E_i$ the inverse image  of $\tau_i$ in $F$, we obtain a chain of subsheaves
\begin{equation} E \ = \ E_0 \ \to \ E_1 \ \to \ \cdots \ \to \ E_{t-1} \ \to \ E_t \ = \ F , \label{ChainOfETs} \end{equation}
where $E_i / E_{i-1} = \cO_{x_i} \otimes \cc^2$ for each $i$. Thus it will suffice to construct a space parameterizing chains of elementary transformations of the form (\ref{ChainOfETs}) which is projective and irreducible. The construction of \cite[Lemma 4.2]{CH1} can be modified in a natural way to yield such a space as a tower of Grassmann bundles, substituting $\Gr (2, \cF_k^* )$ for $\pp \cF_k^*$ and $\det ( \cU )$ for $\cO_{\pp \cF_k^*} (-1)$, where $\cU \to \Gr (2, \cF_k^*)$ is the universal bundle. As this is technical but straightforward, we omit the details. 

(b) The association $E \mapsto \frac{1}{2} \Supp ( F/E )$ defines a map $\cT^{2t} (F) \to C^{(t)}$, which is clearly surjective. Hence
\begin{equation} \{ E \ \in \ \cT^{2t} ( F ) : F/E \ \cong \ \cO_D \otimes \cc^2 \hbox{ for some reduced $D$ of degree $t$} \} \label{reducedD} \end{equation}
is a nonempty open subset of $\cT^{2t} ( F )$, being the complement of the inverse image of the big diagonal of $C^{(t)}$ by the above map. By the irreducibility proven in part (a), it is dense. Moreover, if $E$ is sufficiently general in $\cT^{2t} ( F )$ then $D$ may be taken to be general.

(c) The set (\ref{reducedD}) is in canonical bijection with the set of $t$-tuples of points of the Grassmann bundle $\Gr (2, F)$ lying over distinct points of $C$. As we saw in part (a) that $\cT^{2t}(F)$ is irreducible, $\dim \cT^{2t} (F) = t \cdot \dim \Gr (2, F ) = t ( 2n-3)$. \end{proof}

\noindent Suppose now that $V$ is $L$-valued orthogonal of rank $2n$, and that $E$ and $F$ are rank $n$ isotropic subbundles intersecting generically in dimension zero. Then by Lemma \ref{GH}, the dimension of $E|_x \cap F|_x$ is even for all $x \in C$. The following lemma slightly generalizes this fact, and shows the relevance of the type $\cT$ property for orthogonal extensions.

\begin{lemma} \label{lemmaTypeT} Let $V$ be an $L$-valued orthogonal bundle of rank $2n \ge 4$. Suppose $E$ and $F$ are rank $n$ isotropic subbundles intersecting generically in rank zero, so that the composition $E \to V \xrightarrow{q} V/F \cong F^* \otimes L$ is an elementary transformation. Then $(F^* \otimes L) / E$ is of type $\cT$. \end{lemma}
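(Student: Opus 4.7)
My plan is to reduce the question to a local matrix computation and then exploit the algebraic constraints imposed by the isotropy of $E$ and $F$.

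First I would verify that the hypothesis that $E$ and $F$ intersect generically in rank zero together with the orthogonal structure actually forces $E \cap F = 0$ as a subsheaf of $V$, so that $\phi \colon E \to F^* \otimes L$ is a genuine injection of rank $n$ bundles with torsion cokernel $\tau := (F^* \otimes L)/E$. This follows because $E^\perp = E$ and $F^\perp = F$ give $(E + F)^\perp = E \cap F$, and $(E + F)^\perp = \overline{E + F}^\perp$ is the kernel of the surjection $V \twoheadrightarrow \overline{E + F}^* \otimes L$ onto a vector bundle, hence itself a subbundle of $V$; being of generic rank zero, it must vanish.

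Next I would reduce the type $\cT$ property to a local even-multiplicity statement. If at every $x \in \Supp(\tau)$ the stalk decomposes as $\tau_x \cong \bigoplus_{a \ge 1}(\cO_{C,x}/\mathfrak{m}_x^a)^{\oplus 2 m_a}$ with all multiplicities even, then $\tau$ is of type $\cT$: one picks two summands of the smallest type $R/\mathfrak{m}^a$ with $m_a > 0$, takes their joint socle as a subsheaf $\cO_x \otimes \cc^2 \subset \tau$, and observes that the quotient still has even multiplicities; iterating produces a $\cT$-filtration. Thus the task reduces to verifying this local structure at each point.

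For the local analysis at $x$, set $R = \cO_{C,x}$ with uniformizer $z$ and trivialize $L$ near $x$. Since any nondegenerate symmetric form on a free module of even rank over $R$ is equivalent to the standard hyperbolic form, and any maximal isotropic subbundle can be placed as one factor, I choose an $R$-basis of $V_x$ with $V_x = R^n \oplus R^n$, form $\sigma((u_1, u_2), (v_1, v_2)) = u_1^T v_2 + u_2^T v_1$, and $F_x = \{0\} \oplus R^n$. Writing $E_x$ as the column span of a $2n \times n$ block matrix with $n \times n$ blocks $A$ on top and $B$ on bottom, the hypotheses translate into $A^T B + B^T A = 0$ (isotropy of $E$), linear independence of the columns modulo $z$ (subbundle), and $\det A \ne 0$ (from $E \cap F = 0$). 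One checks directly that $\tau_x = \mathrm{coker}(A)$.

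The key step is now a Smith normal form computation. By simultaneous changes of basis of $E$ and of $V_x$ (the latter by $U$ on the first factor and $U^{-T}$ on the second, which preserves $\sigma$ and leaves $F_x$ invariant) I may assume $A = D = \mathrm{diag}(z^{a_1}, \ldots, z^{a_n})$ with $0 \le a_1 \le \cdots \le a_n$. The isotropy relation becomes $DB + B^T D = 0$, forcing $b_{ii} = 0$ and $z^{a_i} b_{ij} + z^{a_j} b_{ji} = 0$ for $i \ne j$; grouping indices by the value of $a_i$, this makes the block of $B$ indexed by $a_i = a_j = a$ antisymmetric, and the entries with $a_i < a_j$ lie in $(z^{a_j - a_i})$. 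Read modulo $z$, the subbundle condition then amounts to invertibility of the $(n - m_0) \times (n - m_0)$ submatrix of $B$ indexed by the rows and columns with $a_i \ge 1$, which by the above has block lower triangular form with antisymmetric diagonal blocks. Since an antisymmetric matrix over $\cc$ is invertible iff its size is even, the multiplicity of each positive value $a$ among the $a_i$ is even, yielding the required local structure on $\tau$. The main obstacle is precisely this matrix manipulation: carefully tracking the block structure of $B$ dictated by the isotropy condition, and extracting the parity conclusion from the antisymmetric invertibility requirement.
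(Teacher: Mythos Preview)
Your argument is correct and takes a genuinely different route from the paper's proof. The paper uses the principal part formalism developed in \S\ref{OrthExt}: by Proposition \ref{cohomlifting} it identifies $E$ with $\Ker(p)$ for an antisymmetric principal part $p \in \Prin(L^{-1} \otimes \wedge^2 F)$, and then invokes a ready-made local normal form \cite[Lemma 2.6]{CH3} which says that near each point $p_x = \sum_{i=1}^s \frac{l^* \otimes f_{2i-1} \wedge f_{2i}}{z^{d_i}}$ in a suitable frame, from which the type~$\cT$ structure of $\Image(p) \cong (F^* \otimes L)/E$ is immediate. Your approach instead performs a direct Smith normal form computation on the matrix $A$ and extracts the even-multiplicity conclusion from the antisymmetry and invertibility constraints on $B$, without invoking the principal part machinery at all. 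In effect you are re-deriving the normal form of \cite{CH3} in the language of matrices: your $\beta = BA^{-1}$ is the local incarnation of the antisymmetric $\beta$ in Proposition \ref{cohomlifting}, and your block-triangular argument for $B \bmod z$ recovers the pairing-off of indices in the cited normal form. Your route is more self-contained (no external citation needed) at the cost of more explicit matrix work; the paper's route is shorter given the surrounding machinery.

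One small point of care: the statement that ``any nondegenerate symmetric form on a free module of even rank over $R = \cO_{C,x}$ is equivalent to the hyperbolic form'' is not true as stated, since $\cO_{C,x}$ is not Henselian (for instance $1+z$ need not be a square). What you actually use, and what is true, is that since $F_x \subset V_x$ is already a maximal isotropic direct summand, one can choose a complementary isotropic direct summand (split the extension $0 \to F_x \to V_x \to F_x^* \to 0$ over the local ring and then correct the complement by the standard $-\tfrac{1}{2}\beta^\sharp$ shift), giving the hyperbolic decomposition with $F_x$ as one factor. Alternatively, pass to the completion $\widehat{\cO}_{C,x}$, where your claim holds and the structure of the finite-length module $\tau_x$ is unchanged. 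Either fix restores the argument with no further change.
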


\begin{proof} By Proposition \ref{cohomlifting} (a) and (b), there exists $p \in \Prin ( L^{-1} \otimes \wedge^2 F )$ such that $E$ is isomorphic to the locally free subsheaf of $F^* \otimes L$ given by
\[ \Ker \left( p \colon F^* \otimes L \to \sPrin(F) \right) \ = \ \{ \phi \in F^* \otimes L : p(\phi) \hbox{ is regular} \} . \]
Thus $(F^* \otimes L) / E \cong \Image \left( p \colon F^* \otimes L \to \sPrin(F) \right)$. Let us describe $\Image (p)$.

By \cite[Lemma 2.6]{CH3}, for each $x \in \Supp(p)$ there exists a frame $f_1 , \ldots , f_n$ for $F$ near $x$ and a local parameter $z$ at $x$ such that 
\[ p_x \ = \sum_{i=1}^s \frac{l^* \otimes f_{2i-1} \wedge f_{2i}}{z^{d_i}} \]
for positive integers $d_1, \ldots , d_s$ with $1 \le s \le n/2$, and $l^*$ a local generator for $L^{-1}$. From this we see that $\Image(p_x) = \bigoplus_{i=1}^s T_i$, where $T_i$ is the torsion sheaf generated over $\Oc$ by $\frac{f_{2i-1}}{z^{d_i}}$ and $\frac{f_{2i}}{z^{d_i}}$. For $1 \le i \le s$, we have a filtration
\[ 0 \ = \ z^{d_i} \cdot T_i \ \subset \ z^{d_i - 1} \cdot T_i \ \subset \ \cdots \ \subset z \cdot T_i \ \subset \ T_i . \]
For $0 \le j \le d_i - 1$, the quotient $\left( z^j \cdot T_i \right) / \left( z^{j+1} \cdot T_i \right)$ is generated by the images of $z^j f_{2i-1}$ and $z^j f_{2i}$, so it is of the form $\cO_x \otimes \cc^2$. Hence each $T_i$ is of type $\cT$, and so $\Image (p) = (F^* \otimes L) / E$ is of type $\cT$. \end{proof}

\begin{remark} It follows from the proof of Lemma \ref{lemmaTypeT} that for any $p \in \Prin( L^{-1} \otimes \wedge^2 F^*)$, the elementary transformation $\Ker (p) \subseteq F^* \otimes L$ is of type $\cT$. The converse does not hold, however: Consider the exact sequence
\[ 0 \to \frac{\Oc(x)}{\Oc} \oplus \frac{\Oc(x)}{\Oc} \ \to \ \frac{\Oc(x)}{\Oc} \oplus \frac{\Oc(x)}{\Oc} \oplus \frac{\Oc(2x)}{\Oc} \ \xrightarrow{\alpha} \ \frac{\Oc(x)}{\Oc} \oplus \frac{\Oc(x)}{\Oc} \ \to \ 0 \]
where the quotient map $\alpha$ is given by
\[ \alpha \left( \frac{a}{z} , \frac{b}{z} , \frac{c+dz}{z^2} \right) \ = \ \left ( \frac{a}{z} , \frac{c}{z} \right) . \]
Thus the middle term is a torsion sheaf of type $\cT$ which is not of the form $\Image(p)$ for any $p \in \Prin ( L^{-1} \otimes \wedge^2 F )$.
\end{remark}

The following is a restatement of Theorem \ref{TypeT} (a).

\begin{proposition} \label{Smoothable} Let $\cE \to Z \times C$ be a family of rank $n$ isotropic subsheaves of $V$ parameterized by an irreducible variety $Z$. Suppose that $\cE_z$ is saturated in $V$ for generic $z \in Z$. Then for all $z \in Z$, the quotient $\bcE_z / \cE_z$ is of type $\cT$. \end{proposition}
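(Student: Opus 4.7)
The plan is to reduce to a smooth one-parameter family and then analyze the degeneration using the description of isotropic subbundles via antisymmetric principal parts developed in \S\:\ref{OrthExt}. Since $Z$ is irreducible and $\cE_z$ is saturated for generic $z$, for any fixed $z_0 \in Z$ I can choose a morphism from a smooth curve $B$ to $Z$ hitting both $z_0$ (at some $b_0 \in B$) and a generic point of $Z$. After pulling back $\cE$ along this map, I may assume $Z = B$ is a smooth curve, $\cE_b = F_b$ is a saturated rank $n$ isotropic subbundle of $V$ for every $b \ne b_0$, and $\cE_{b_0} = E_0$ with saturation $\bE_0 \subset V$.

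Next, I would choose an auxiliary rank $n$ isotropic subbundle $G \subset V$ with $G \cap \bE_0 = 0$ generically on $C$, shrinking $B$ so that also $G \cap F_b = 0$ generically for all $b \ne b_0$ in $B$. Applying Lemma~\ref{GoodCoords} with $F = G$ and $E = \bE_0$ gives an identification $V \cong V^{p_0}$ under which $G$ is the canonical $F$-copy and $\bE_0 \cong \Ker(p_0) \subset G^* \otimes L$ for some antisymmetric $p_0 \in \Prin(L^{-1} \otimes \wedge^2 G)$. By Proposition~\ref{cohomlifting}, for each $b \ne b_0$ there is a unique $\beta_b \in \Rat(L^{-1} \otimes \wedge^2 G)$ with $F_b \cong \Ker(p_0 - \overline{\beta_b})$ inside $G^* \otimes L$. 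As $b \to b_0$, the subsheaves $F_b$ specialize in $\Quot_{n, e}(V)$ to $E_0$; pushed through the projection $V \to V/G \cong G^* \otimes L$ (which is injective on $E_0$ since $E_0 \cap G \subset \bE_0 \cap G = 0$ generically), the corresponding principal parts $p_0 - \overline{\beta_b}$ converge to a principal part $p_*$ with $\Ker(p_*) = E_0$. Antisymmetry being a linear (hence closed) condition on $\Prin(L^{-1} \otimes G \otimes G)$, both $p_*$ and $r := p_* - p_0$ are antisymmetric.

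From this picture, the inclusion $E_0 \subset \bE_0$ forces $E_0 = \bE_0 \cap \Ker(r)$: indeed, $\phi \in E_0$ means both $p_0(\phi)$ and $(p_0+r)(\phi)$ are regular, so $r(\phi)$ is regular, while the reverse inclusion is obvious. Consequently $\bE_0/E_0 \hookrightarrow (G^* \otimes L)/\Ker(r) = \Image(r)$, and since $r$ is antisymmetric, the proof of Lemma~\ref{lemmaTypeT} shows that $\Image(r)$ is of type $\cT$. The main obstacle is to promote this injection to an equality (since being of type $\cT$ is not in general inherited by subsheaves). My plan is to exploit the freedom in choosing $G$ to arrange $\Supp(p_0) \cap \Supp(r) = \emptyset$ on $C$; in that case one checks stalk by stalk that $\bE_0 + \Ker(r) = G^* \otimes L$, whence $\bE_0/E_0 \cong \Image(r)$ is of type $\cT$. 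Justifying the existence of such a sufficiently generic $G$ — transverse to $\bE_0$ along $C$ and disjointly supported with the limiting principal part $r$ — is the delicate technical step, and may require combining both components of $\OG(V)$ (Proposition~\ref{OGVTwoCpts}) with orthogonal Hecke transformations to produce enough candidates.
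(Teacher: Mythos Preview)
Your overall strategy matches the paper's: pick an auxiliary rank $n$ isotropic subbundle $G$ (the paper's $F$) transverse to $\bE_0$, push everything into $G^* \otimes L$, and use the type~$\cT$ property together with disjointness of supports. But the route you take to the punchline has a genuine gap.

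The problematic step is the assertion that the principal parts $p_0 - \overline{\beta_b}$ ``converge to a principal part $p_*$ with $\Ker(p_*) = E_0$''. There is no evident compactness on the space of antisymmetric principal parts that would force this. What does converge is the family of elementary transformations $q(F_b) \subset G^* \otimes L$ in the Quot scheme; by closedness of $\cT^{2t}(G^* \otimes L)$ (Lemma~\ref{TypeTParameterSpace}~(a)) the limit $q(E_0)$ is again of type~$\cT$. However, the Remark following Lemma~\ref{lemmaTypeT} shows explicitly that not every type~$\cT$ elementary transformation is $\Ker(p)$ for an antisymmetric $p$, so you cannot conclude that $q(E_0) = \Ker(p_*)$ for some antisymmetric $p_*$. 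Without this, the decomposition $p_* = p_0 + r$ and the identity $E_0 = \bE_0 \cap \Ker(r)$ are unavailable, and your argument that $\bE_0/E_0 \cong \Image(r)$ collapses.

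The paper sidesteps this entirely. Once $F$ is chosen with $F|_x \cap \bE|_x = 0$ for every $x \in \Supp(\bE/E)$, one simply uses the short exact sequence
\[
0 \ \to \ \bE/E \ \to \ (F^* \otimes L)/q(E) \ \to \ (F^* \otimes L)/q(\bE) \ \to \ 0 .
\]
The middle term is of type~$\cT$ by closedness of $\cT^{2k}$ (it is the limit of $(F^* \otimes L)/q(\cE_z)$ for saturated $\cE_z$, each of type~$\cT$ by Lemma~\ref{lemmaTypeT}); the right-hand term is of type~$\cT$ directly by Lemma~\ref{lemmaTypeT} applied to the isotropic subbundle $\bE$. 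By the transversality of $F$, the supports of $\bE/E$ and of $(F^* \otimes L)/q(\bE)$ are disjoint, so $\bE/E$ inherits the type~$\cT$ property stalk by stalk. No limiting principal part is needed.

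Finally, the ``delicate technical step'' you flag is handled in the paper by a clean trick: remove one point to obtain an affine open $U \subset C$ containing $\Supp(\bE/E)$, split $V|_U \cong \bE|_U \oplus (\bE^* \otimes L)|_U$ using $H^1(U,\,\cdot\,)=0$, and then choose $\omega \in H^0(U, L^{-1} \otimes \wedge^2 \bE)$ with prescribed values at the points of $\Supp(\bE/E)$ so that the graph of $\omega$ gives the required transverse isotropic subbundle. This avoids any appeal to Hecke transformations or components of $\OG(V)$.
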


\begin{proof} Fix $z_0 \in Z$. If $\cE_{z_0}$ is saturated, then there is nothing to prove. Suppose $\cE_{z_0} \ne \overline{\cE_{z_0}}$. To ease notation, write $\cE_{z_0} =: E$.

Choose $p \in C \setminus \Supp \left( \bE / E \right)$, and set $U := C \setminus \{ p \}$. As $p$ is an ample divisor, $U$ is affine, so $H^1 ( U, L^{-1} \otimes \bE^{\otimes 2} ) = 0$ by \cite[Theorem III.3.7]{Ha}. Hence
\[ V|_U \ \cong \ \bE|_U \oplus \ (\bE^* \otimes L)|_U . \]
Let $B := x_1 + \cdots + x_k$ be the reduced divisor underlying $\Supp ( \bE / E )$. For $1 \le i \le k$, choose an $n$-dimensional isotropic subspace $\Lambda_i \subset V|_{x_i}$ intersecting $\bE|_{x_i}$ in zero. (Notice that by Lemma \ref{GH}, all such $\Lambda_i$ belong to the same component of $\OG (V)$.) By linear algebra, such a $\Lambda_i$ is the graph of a unique antisymmetric map $\omega_i \colon \bE^* \otimes L|_{x_i} \to \bE|_{x_i}$ for all $i$. Since $U$ is affine, we may assume $h^1 ( U, L^{-1}(-B) \otimes \wedge^2 \bE ) = 0$. Thus there exists $\omega \in H^0 ( U, L^{-1} \otimes \wedge^2 \bE )$ such that $\omega|_{x_i} = \omega_i$. The graph of $\omega$ is then a rank $n$ isotropic subbundle $F_U$ of $V|_U$ satisfying $F|_x \cap \bE|_x = 0$ for each $x \in \Supp ( \bE / E )$. As $C$ is a curve, $F_U$ extends uniquely to a rank $n$ isotropic subbundle of $V$. Then $V$ is an orthogonal extension $0 \to F \to V \xrightarrow{q} F^* \otimes L \to 0$.

Shrinking $Z$ if necessary, we may assume that $\rank ( \cE_z \cap F ) = 0$ for all $z \in Z$. By Lemma \ref{lemmaTypeT}, the elementary transformation $q \colon \cE_z \to F^* \otimes L$ has a torsion quotient of type $\cT$ for all $z$ in the open subset of $Z$ such that $\cE_z$ is saturated. As by Lemma \ref{TypeTParameterSpace} (a) the locus $\cT^{2k} ( F^* \otimes L )$ is closed, it also contains the limit $[q \colon E \to F^* \otimes L]$, so $(F^* \otimes L) / q ( E )$ is also of type $\cT$.

On the other hand, since $\bE$ is an isotropic subbundle, $(F^* \otimes L) / q (\bE)$ is also of type $\cT$ by Lemma \ref{lemmaTypeT}. Now there is an exact sequence
\[ 0 \ \to \ \bE / E \ \to \ (F^* \otimes L) /q(E) \ \to \ (F^* \otimes L) / q(\bE) \ \to \ 0 , \]
where both $F^* \otimes L / q(E)$ and $F^*/q(\bE)$ are of type $\cT$. By the construction of $F$ above, $\Supp \left( (F^* \otimes L)/q(\bE) \right)$ and $\Supp ( \bE / E )$ are disjoint. Now clearly a torsion sheaf $\Sigma$ is of type $\cT$ if and only if $\cT_x$ is of type $\cT$ for all $x \in \Supp ( \Sigma )$. Therefore, $\bE / E$ is of type $\cT$. \end{proof}

\begin{remark} \label{SmoothabilityCounterexample} In general, not every rank $n$ isotropic subsheaf $E \subset V$ with $\overline{E}/E$ of type $\cT$ need deform to an isotropic subbundle. For example, consider rank four $\Oc$-valued orthogonal extensions $0 \to F \to V \to F^* \to 0$ where $F$ is a general stable bundle of rank two and degree $-1$. For any $x \in C$, the elementary transformation $F(-x) \subset F$ is a rank $n$ isotropic subsheaf of degree $-3$ with $F/F(-x)$ of type $\cT$. However, if $g \ge 5$ and the chosen extension class for $V$ is general, then by an argument similar to that in the proof of \cite[Proposition 3.3]{CH3}, there are no rank two isotropic subbundles of degree $-3$ in $V$.

However, we shall prove in {\S}\:\ref{EvenRank} that for sufficiently small $e$, every rank $n$ isotropic subsheaf $E$ of degree $e$ with $\bE / E$ of type $\cT$ does deform to an isotropic subbundle. \end{remark}

\section{Orthogonal extensions and isotropic liftings} \label{OrthExtII}

In this section, we study liftings in orthogonal extensions more closely. Some proofs are essentially identical to the symplectic case treated in \cite[{\S} 3]{CCH1}, but we include some details for the reader's convenience.

Let $V$ be an $L$-valued orthogonal bundle isomorphic to an orthogonal extension $0 \to F \to V^p \to F^* \otimes L \to 0$ as defined in (\ref{V}), and let $0 \to E \xrightarrow{\gamma} F^* \otimes L \to \tau \to 0$ be an elementary transformation where $\tau$ is a torsion sheaf. Assume that there is a lifting $j \colon E \to V$. By Proposition \ref{cohomlifting}, there exists a rational map $\beta \colon \sRat (F^* \otimes L) \to \sRat(F)$ such that $E \subseteq \Gamma_\beta \cap V^p \cong \Ker ( p - \overline{\beta} )$. The  following result, generalizing Proposition \ref{cohomlifting} (c), provides the main idea to ``linearize''  the collection of rank $n$ isotropic subsheaves of $V$ intersecting $F$ generically in zero.

\begin{lemma} \label{DifferentLiftings} The set of liftings of $\gamma \colon E \to F^* \otimes L$ to isotropic subsheaves of $V = V^p$ is a torsor over $H^0 \left( C, \Hom ( E, F ) \cap \sRat ( L^{-1} \otimes \wedge^2 F ) \right)$. \end{lemma}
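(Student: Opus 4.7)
The plan is to identify the set of isotropic liftings of $\gamma$ with a torsor by taking differences of liftings. Given two isotropic liftings $j_1, j_2 \colon E \to V^p$ of the same $\gamma$, the difference $h := j_2 - j_1$ projects to zero in $F^* \otimes L$, so it factors through the isotropic subsheaf $F \subset V^p$ and produces a regular homomorphism $h \in H^0(C, \Hom(E, F))$. Conversely, for any $h \in H^0(C, \Hom(E, F))$ the sum $j_1 + h$ is a lifting of $\gamma$, and the only remaining question is when isotropy is preserved.

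Writing $j_1(e) = (f(e), \gamma(e))$ in the notation of (\ref{V}), a direct computation with the pairing (\ref{standardquadform}) gives
$$\sigma\bigl(j_1(e_1) + h(e_1),\, j_1(e_2) + h(e_2)\bigr) \ = \ \sigma(j_1(e_1), j_1(e_2)) + \langle h(e_2), \gamma(e_1)\rangle + \langle h(e_1), \gamma(e_2)\rangle,$$
since the pure-$F$ contribution $\langle h(e_2),0\rangle + \langle h(e_1),0\rangle$ vanishes by isotropy of $F$. As $j_1$ is already isotropic, $j_1 + h$ is isotropic if and only if $\langle h(e_1), \gamma(e_2)\rangle + \langle h(e_2), \gamma(e_1)\rangle = 0$ for all $e_1, e_2 \in E$.

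The key step is to translate this into an antisymmetry condition on a rational homomorphism. Since $\gamma$ is an elementary transformation between sheaves of the same rank, it is a rational isomorphism, and so induces a canonical identification $\sRat(\Hom(E, F)) \cong \sRat(\Hom(F^* \otimes L, F)) = \sRat(L^{-1} \otimes F^{\otimes 2})$. Substituting $\varphi_i := \gamma(e_i)$, the displayed vanishing becomes the antisymmetry of $h \circ \gamma^{-1} \in \Rat(L^{-1} \otimes F^{\otimes 2})$, that is, $h$ viewed through the above identification lies in the subsheaf $\sRat(L^{-1} \otimes \wedge^2 F)$. Combined with the regularity established in the first step, this places $h$ in $H^0(C, \Hom(E, F) \cap \sRat(L^{-1} \otimes \wedge^2 F))$, and the reverse computation shows that any such $h$ yields an isotropic lifting. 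This gives a free and transitive action on the set of isotropic liftings of $\gamma$, which is nonempty by the standing assumption. The main point requiring care is that $\gamma$ is only a rational isomorphism, so the identification of sheaves must be carried out at the level of $\sRat$ and then restricted to the regular part; this is essentially the same mechanism as in the subbundle case of Proposition \ref{cohomlifting}(c) and the symplectic argument of \cite[Proposition 3.2(c)]{CCH1}, adapted to allow torsion in the cokernel of $\gamma$.
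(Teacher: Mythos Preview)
Your proof is correct and follows essentially the same approach as the paper's (commented-out) detailed argument: take the difference of two liftings to land in $\Hom(E,F)$, and show that isotropy forces the induced rational map $F^*\otimes L \to F$ to be antisymmetric, i.e.\ to lie in $\sRat(L^{-1}\otimes\wedge^2 F)$. The only cosmetic difference is that the paper appeals to Proposition~\ref{cohomlifting}(a) to write each lifting as $(\beta_i(v),\gamma(v))$ with $\beta_i\in\Rat(L^{-1}\otimes\wedge^2 F)$ already antisymmetric, whereas you verify the antisymmetry directly from the pairing; the content is the same.
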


\begin{proof} This is identical to \cite[Lemma 3.8]{CCH1} up to changes of sign. Let us just indicate how the intersection of $\Hom (E, F)$ and $\sRat ( L^{-1} \otimes \wedge^2 F )$ is well defined. Since $L^{-1} \otimes F \xrightarrow{^t\gamma} E^*$ is an elementary transformation, $E^*$ is a subsheaf of $\sRat (L^{-1} \otimes F)$. Hence $\Hom (E, F) = E^* \otimes F$ and $\sRat ( L^{-1} \otimes \wedge^2 F)$ are both $\Oc$-submodules of $\sRat ( L^{-1} \otimes F \otimes F)$. \end{proof}

\noindent Motivated by Lemma \ref{DifferentLiftings}, we define a sheaf $A_\gamma$ as follows.

\begin{definition} \label{defAgamma} Let $0 \to E \xrightarrow{\gamma} F^* \otimes L \to \tau \to 0$ be an elementary transformation defining a point of $\Elm^k ( F^* \otimes L )$ for some $k \ge 0$. From the transpose $F \otimes L^{-1} \xrightarrow{^t\gamma} E^*$ we deduce an elementary transformation $L^{-1} \otimes F \otimes F \to E^* \otimes F$. We define $A_\gamma$ to be the saturation of $L^{-1} \otimes \wedge^2 F$ in $E^* \otimes F$. Note that
\[ A_\gamma \ = \ \Hom ( E, F ) \cap \sRat ( A_\gamma ) \ = \ \Hom (E, F) \cap \sRat ( L^{-1} \otimes \wedge^2 F ) . \]
\end{definition}

\noindent We remark also that the definition of $A_\gamma$ depends only on $\gamma$, and does not make reference to an extension $0 \to F \to V \to F^* \otimes L \to 0$.

\begin{lemma} \label{Agamma} Let $\gamma \colon E \to F^* \otimes L$ and $A_\gamma$ be as above.
\begin{enumerate}
\item[(a)] There is a short exact sequence
\begin{equation} 0 \ \to \ L^{-1} \otimes \wedge^2 F \ \to \ A_\gamma \ \to \ \tau_1 \ \to \ 0, \label{Agammaparta} \end{equation}
where $\tau_1$ is a torsion sheaf. In particular, $A_\gamma$ is locally free of rank $\frac{1}{2}n(n-1)$.
\item[(b)] There is a short exact sequence
\[ 0 \ \to \ A_\gamma \ \to \ L \otimes \wedge^2 E^* \ \to \ \tau_2 \ \to \ 0, \]
where $\tau_2$ is a torsion sheaf.
\item[(c)] Suppose $\frac{F^* \otimes L}{E} \cong \cO_D \otimes \cc^2$ for a reduced $D \in C^{(t)}$. Then $\tau_1 \cong \cO_D$. In this case, $\deg (A_\gamma) = \deg (L^{-1} \otimes \wedge^2 F) + t$. \end{enumerate}
\end{lemma}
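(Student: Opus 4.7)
The approach for part (a) is immediate from the definition: $A_\gamma$ is by construction the saturation of the locally free sheaf $L^{-1} \otimes \wedge^2 F$ inside the locally free sheaf $E^* \otimes F$. Saturations of locally free subsheaves in a locally free ambient sheaf are themselves locally free of the same rank, and the quotient is by definition torsion. This yields (\ref{Agammaparta}) with $\tau_1$ torsion, and $\rank (A_\gamma) = \rank (L^{-1} \otimes \wedge^2 F) = \frac{n(n-1)}{2}$.

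For part (b), the plan is to produce a natural map $A_\gamma \to L \otimes \wedge^2 E^*$ and show it is injective with torsion cokernel. The transpose $^t\gamma \colon L^{-1} \otimes F \to E^*$ gives, after twisting by $L$, an elementary transformation $F \hookrightarrow L \otimes E^*$. Tensoring with the locally free sheaf $E^*$ preserves injectivity and yields an inclusion $E^* \otimes F \hookrightarrow L \otimes E^* \otimes E^*$. Composing with the antisymmetrization projection $L \otimes E^{*\otimes 2} \twoheadrightarrow L \otimes \wedge^2 E^*$ produces a morphism $\pi \colon E^* \otimes F \to L \otimes \wedge^2 E^*$. At any point where $\gamma$ is an isomorphism, $\pi$ identifies $L^{-1} \otimes \wedge^2 F$ (via its inclusion into $E^* \otimes F$) with $L \otimes \wedge^2 E^*$; hence $\pi|_{A_\gamma}$ is generically an isomorphism between two locally free sheaves of rank $\frac{n(n-1)}{2}$, and in particular is injective with torsion cokernel $\tau_2$.

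For part (c), the plan is an explicit local calculation at each point $x \in \Supp D$. Under the hypothesis $(F^* \otimes L)/E \cong \cO_D \otimes \cc^2$ with $D$ reduced, the Smith normal form of $\gamma$ at $x$ takes the form: there exist local frames $e_1, \dots, e_n$ of $E$ and $f_1^*, \dots, f_n^*$ of $F^* \otimes L$ and a local parameter $z$ at $x$ such that $\gamma(e_i) = z^{a_i} f_i^*$ with $a_1 = a_2 = 1$ and $a_3 = \dots = a_n = 0$. Then $^t\gamma \otimes \Iden_F$ sends $l^{-1} \otimes (f_i \wedge f_j)$ to $z^{a_i} e_i^* \otimes f_j - z^{a_j} e_j^* \otimes f_i$. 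For any pair $(i, j)$ with $\{i, j\} \neq \{1, 2\}$ this expression has at least one coefficient equal to a unit, so it is already saturated in $E^* \otimes F$; for the pair $(1, 2)$ the image is $z(e_1^* \otimes f_2 - e_2^* \otimes f_1)$ and its saturation adds exactly one local generator. Thus $\tau_1$ has length $1$ at each point of $D$ and is supported on the reduced divisor $D$, so $\tau_1 \cong \cO_D$. Taking degrees in (\ref{Agammaparta}) gives the final equation.

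The main obstacle is the local computation in (c), specifically verifying that among the $\binom{n}{2}$ local generators of the image of $L^{-1} \otimes \wedge^2 F$ in $E^* \otimes F$, exactly one (the one indexed by the pair for which both $a_i = 1$) requires saturation, and that saturation adds length precisely one. The key is to choose the Smith-normal-form frames carefully so that the generators decouple and the contribution of each pair can be read off independently.
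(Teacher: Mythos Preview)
Your proposal is correct and follows essentially the same approach as the paper. Parts (a) and (b) are handled identically. For (c), both you and the paper perform a local computation in adapted frames at each $x \in D$: the paper uses the description $A_\gamma = (E^* \otimes F) \cap \sRat(L^{-1} \otimes \wedge^2 F)$ and writes a local basis for $E^*$ as $\frac{l^* \otimes v_1}{z}, \frac{l^* \otimes v_2}{z}, l^* \otimes v_3, \ldots, l^* \otimes v_n$ inside $\sRat(L^{-1} \otimes F)$, then reads off directly that $A_\gamma$ has local basis $\{ z^{-1}\, l^* \otimes v_1 \wedge v_2 \} \cup \{ l^* \otimes v_i \wedge v_j : (i,j) \ne (1,2) \}$; you instead compute the image of $L^{-1} \otimes \wedge^2 F$ under $^t\gamma \otimes \Iden_F$ in the frame $e_i^* \otimes f_j$ and saturate. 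These are the same calculation in dual coordinate systems, and your observation that the generators $w_{ij}$ involve pairwise disjoint basis elements of $E^* \otimes F$ (namely $e_i^* \otimes f_j$ and $e_j^* \otimes f_i$) is exactly what justifies treating each $w_{ij}$ independently.
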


\begin{proof} Part (a) follows from the definition, and part (b) is identical to \cite[Lemma 3.8 (b)]{CCH1}, substituting $\wedge^2$ for $\Sym^2$.
 As for (c): Since $(F^* \otimes L)/E \cong \cO_D \otimes \cc^2$, also $E^* / (L^{-1} \otimes F) \cong \cO_D \otimes \cc^2$. At each $x \in \Supp (\tau) = \Supp(D)$, a local basis for $E^* \subset \sRat (L^{-1} \otimes F)$ is
\[ \frac{l^* \otimes v_1}{z} , \frac{l^* \otimes v_2}{z} , l^* \otimes v_3 , \ldots, l^* \otimes v_n , \]
where $\{ v_1, \ldots , v_n \}$ is a suitable local basis of $F$ and $l^*$ is a local generator of $L^{-1}$, and $z$ is a local parameter at $x$. Then a local basis of $E^* \otimes F$ is given by
\[
\left\{ \frac{l^* \otimes v_i \otimes v_j}{z} : \begin{array}{c} 1 \le i \le 2 ; \\ 1 \le j \le n \end{array} \right\} \: \cup \: \left\{l^* \otimes v_i \otimes v_j : \begin{array}{c} 3 \le i \le n ; \\ 1 \le j \le n \end{array} \right\}.
\]
Thus a local basis of $A_\gamma$ is
\[ \left\{ \frac{l^* \otimes (v_1 \wedge v_2)}{z} \right\} \ \cup \ \left\{ l^* \otimes (v_i \wedge v_j) : \begin{array}{c} 1 \le i < j \le n ; \\ (i, j) \ne (1, 2) \end{array} \right\} . \]
Thus the torsion sheaf $A_\gamma / \left( L^{-1} \otimes \wedge^2 F \right)$ is supported along $D$ and has length 1 at each point of $D$. Part (c) follows. \end{proof}

\begin{remark} \label{nonreduced} In analogy with \cite[Remark 3.8]{CCH1}, we observe that part (c) may be false if $D$ is not reduced. Suppose for example that $L = \Oc$ and that $E^* / F \cong \frac{\Oc(x)}{\Oc} \otimes \Lambda$ for some subspace $\Lambda \subseteq F$. Then there is an exact sequence $0 \to \wedge^2 F \to A_\gamma \to \frac{\Oc(x)}{\Oc} \otimes \wedge^2 \Lambda \to 0$. Thus $\tau_1 \cong \cO_x$ if and only if $\Lambda$ has dimension $2$. \end{remark}

Next, we discuss some geometry in orthogonal extension spaces which we shall require later. Let $F \to C$ be a bundle of rank $n$, and consider the Grassmann bundle $\pi \colon \Gr(2, F) \to C$. For the remainder of this section, we shall assume $h^1 ( C, L^{-1} \otimes \wedge^2 F ) \ne 0$. By Serre duality and the projection formula, there is an isomorphism
\[ \pp H^1 ( C, L^{-1} \otimes \wedge^2 F) \ \xrightarrow{\sim} | \cO_{\pp (\wedge^2 F)} (1) \otimes \pi^* (\Kc L) |^* . \]
Composing with the relative Pl\"ucker embedding, we obtain a natural map
\[ \psi \colon \Gr (2, F) \ \dashrightarrow \ \pp H^1 (C, L^{-1} \otimes \wedge^2 F ) \]
with nondegenerate image. This was studied in \cite[{\S} 2.2]{CH3} and \cite[{\S} 3]{CH5}.

For each $x \in C$, there is a cohomology sequence
\begin{multline} 0 \ \to \ H^0 ( C, L^{-1} \otimes \wedge^2 F ) \ \to \ H^0 ( C, L^{-1}(x) \otimes \wedge^2 F ) \\
 \to \ \left( L^{-1}(x) \otimes \wedge^2 F \right)_x \ \to \ H^1 ( C, L^{-1} \otimes \wedge^2 F ) \label{fiberwise} \end{multline}
Now we recall an explicit description of $\psi$ given in \cite[{\S} 2.2]{CH3}. (The statement is for $L = \Oc$, but the argument for arbitrary $L$ is identical.)

\begin{lemma} The map $\psi$ can be identified fiberwise with the projectivization of the coboundary map in (\ref{fiberwise}), restricted to the image of the Pl\"ucker map $\Gr(2, F|_x) \hookrightarrow \pp ( L^{-1} \otimes \wedge^2 F )|_x$. In particular, the image of $\Lambda \in \Gr(2, F|_x)$ is the projectivized cohomology class of a principal part of the form $\frac{l^* \otimes v_1 \wedge v_2}{z}$, where $v_1, v_2$ are linearly independent elements of $\Lambda$ and $l^*$ is a local generator of $L^{-1}$, and $z$ is a local parameter at $x$. \label{alternativePsi} \end{lemma}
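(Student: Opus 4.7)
The plan is to trace through the definition of $\psi$ and verify that, in each fiber, it agrees with the coboundary map in \ref{fiberwise} by a Serre-duality computation. The argument is essentially the one given in \cite[{\S} 2.2]{CH3} for $L = \Oc$, and I would mainly observe that the $L$-twist introduces no new complications.

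First, I would identify the target. The projection formula applied to $\pi \colon \pp(\wedge^2 F) \to C$, together with $\pi_* \cO_{\pp(\wedge^2 F)}(1) \cong \wedge^2 F^*$, yields
\[ H^0\!\left( \pp(\wedge^2 F),\ \cO_{\pp(\wedge^2 F)}(1) \otimes \pi^*(\Kc L) \right) \ \cong \ H^0\!\left( C,\ \Kc L \otimes \wedge^2 F^* \right) , \]
and Serre duality identifies the latter with $H^1(C, L^{-1} \otimes \wedge^2 F)^*$. This is precisely the isomorphism stated just before the lemma, and explains why $\psi$ lands in $\pp H^1(C, L^{-1} \otimes \wedge^2 F)$. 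Since the Pl\"ucker embedding pulls $\cO_{\pp(\wedge^2 F)}(1)$ back to the Pl\"ucker line bundle on $\Gr(2, F)$, the restriction of $\psi$ is defined by the same linear system.

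Next, I would perform two separate fiber computations for $\Lambda = \langle v_1, v_2 \rangle \in \Gr(2, F|_x)$. On the one hand, a section $\sigma \in H^0(C, \Kc L \otimes \wedge^2 F^*)$, viewed via the above isomorphism, vanishes at the Pl\"ucker image of $\Lambda$ precisely when $\langle \sigma(x), v_1 \wedge v_2 \rangle = 0$ in $(\Kc L)|_x$; so $\psi(\Lambda)$ is the projectivization of the linear functional $\sigma \mapsto \langle \sigma(x), v_1 \wedge v_2 \rangle$. On the other hand, the coboundary in \ref{fiberwise} sends the local generator of $(L^{-1}(x) \otimes \wedge^2 F)|_x$ corresponding to $v_1 \wedge v_2$ to the cohomology class $\left[ \frac{l^* \otimes v_1 \wedge v_2}{z} \right]$, as one sees by the standard \v{C}ech computation: lift the local datum to $\frac{l^* \otimes v_1 \wedge v_2}{z}$ on a disc about $x$ and compare with the zero section on $C \setminus \{x\}$.

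Finally, I would match the two descriptions. The Serre pairing of $\sigma$ with the class above equals
\[ \mathrm{Res}_x \!\left( \tfrac{\langle \sigma,\ v_1 \wedge v_2 \rangle \cdot l^*}{z} \right) , \]
in which the canonical contraction $L \otimes L^{-1} \to \Oc$ absorbs $l^*$ to leave a scalar residue proportional to $\langle \sigma(x), v_1 \wedge v_2 \rangle$. Hence the hyperplane of sections vanishing on the Pl\"ucker image of $\Lambda$ is precisely the Serre-orthogonal complement of this cohomology class, which is the assertion of the lemma. The main obstacle is purely bookkeeping: verifying the compatibility of the projection formula, Serre duality, \v{C}ech coboundary, and the canonical contractions $\wedge^2 F^* \otimes \wedge^2 F \to \Oc$ and $L \otimes L^{-1} \to \Oc$ up to a nonzero scalar. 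None of these identifications interact nontrivially with the twist by $L$, which is why the $L = \Oc$ argument of \cite[{\S} 2.2]{CH3} transfers without change.
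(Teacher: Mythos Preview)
Your proposal is correct and follows the same approach as the paper, which simply refers to \cite[{\S} 2.2]{CH3} and remarks that the argument for general $L$ is identical. You have spelled out the standard Serre duality and residue computation that underlies that citation, and your observation that the twist by $L$ is absorbed by the canonical contraction $L \otimes L^{-1} \to \Oc$ is exactly the reason the $L = \Oc$ argument transfers unchanged.
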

	
Although we do not use this fact, we mention that if $F$ is stable and $\mu ( F ) < \frac{\ell}{2} - 1$, then $\psi$ is an embedding (see \cite[Lemma 2.2 (1)]{CH3} for the case where $L = \Oc$).

Now let $\sigma_1 , \ldots , \sigma_t$ be points of $\Gr (2, F )$ lying over distinct points $x_1 , \ldots , x_t$ of $C$ respectively. Let $[\gamma \colon E \to F^* \otimes L]$ be the point of $\cT^{2t} ( F^* \otimes L )$ determined by $\sigma_1 , \ldots , \sigma_t$.

\begin{lemma} \label{vanishinglemma} We have $h^1 ( C, A_\gamma ) = 0$ if and only if the points $\psi (\sigma_1) , \ldots , \psi (\sigma_t)$ span $\pp H^1(C, L^{-1} \otimes \wedge^2 F)$.
\end{lemma}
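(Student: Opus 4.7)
The plan is to deduce the statement from the short exact sequence of Lemma \ref{Agamma}(a)--(c) by taking cohomology and identifying the resulting coboundary map fiberwise with the projective map $\psi$ of Lemma \ref{alternativePsi}.

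First I would apply Lemma \ref{TypeTParameterSpace}(b) and Lemma \ref{Agamma}(c): since the $x_i$ are distinct, the cokernel $(F^* \otimes L)/E$ is isomorphic to $\cO_D \otimes \cc^2$ for the reduced divisor $D = x_1 + \cdots + x_t$, and Lemma \ref{Agamma}(c) then gives the short exact sequence
\[
0 \ \to \ L^{-1} \otimes \wedge^2 F \ \to \ A_\gamma \ \to \ \cO_D \ \to \ 0.
\]
Since $\cO_D$ is a torsion sheaf, $H^1(C, \cO_D) = 0$, so the associated long exact sequence in cohomology collapses to
\[
H^0(C, \cO_D) \ \xrightarrow{\partial} \ H^1(C, L^{-1} \otimes \wedge^2 F) \ \to \ H^1(C, A_\gamma) \ \to \ 0.
\]
Thus $h^1(C, A_\gamma) = 0$ if and only if $\partial$ is surjective, equivalently, if and only if the images $\partial(e_1), \ldots, \partial(e_t)$ of the standard basis vectors of $H^0(C, \cO_D) \cong \bigoplus_{i=1}^t \cc$ span $H^1(C, L^{-1} \otimes \wedge^2 F)$.

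The main step, and the heart of the argument, is to identify $\partial(e_i)$ (up to a nonzero scalar) with a representative of the cohomology class $\psi(\sigma_i)$. For this I would work locally at $x_i$. Writing $\sigma_i = \mathrm{span}(v_1, v_2) \subset F|_{x_i}$ and choosing a local basis of $F$ extending $v_1, v_2$, together with a local parameter $z$ at $x_i$ and a local generator $l^*$ of $L^{-1}$, the computation in the proof of Lemma \ref{Agamma}(c) shows that the local generator of $A_\gamma/(L^{-1} \otimes \wedge^2 F)$ supported at $x_i$ is the image of $\frac{l^* \otimes (v_1 \wedge v_2)}{z}$. Unpacking the \v{C}ech description of the coboundary $\partial$, this means that $\partial(e_i)$ is represented by the principal part $\frac{l^* \otimes (v_1 \wedge v_2)}{z}$ in $\Prin(L^{-1} \otimes \wedge^2 F)$. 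By Lemma \ref{alternativePsi}, this is precisely a representative of $\psi(\sigma_i)$.

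Finally, since $\psi(\sigma_1), \ldots, \psi(\sigma_t)$ span $\pp H^1(C, L^{-1} \otimes \wedge^2 F)$ if and only if their affine lifts $\partial(e_1), \ldots, \partial(e_t)$ span $H^1(C, L^{-1} \otimes \wedge^2 F)$, the equivalence in the lemma follows. The only subtle point to verify carefully is the fiberwise identification of $\partial$ with the Pl\"ucker--coboundary description of $\psi$; this is routine once the local bases are chosen as above, so I do not expect any serious obstacle.
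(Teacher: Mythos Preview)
Your proposal is correct and follows essentially the same approach as the paper: both take the long exact sequence associated to the short exact sequence $0 \to L^{-1} \otimes \wedge^2 F \to A_\gamma \to \tau_1 \to 0$ from Lemma \ref{Agamma}, identify the generators of the torsion quotient at each $x_i$ with the principal parts $\frac{l_i^* \otimes v_1 \wedge v_2}{z_i}$ via the local computation in the proof of Lemma \ref{Agamma}(c), and then invoke Lemma \ref{alternativePsi} to match the coboundary images with $\psi(\sigma_i)$. The only cosmetic difference is that the paper writes the torsion quotient directly as $\bigoplus_i \Oc \cdot \frac{l_i \otimes \eta_i^{(1)} \wedge \eta_i^{(2)}}{z_i}$, making the identification with $\psi$ immediate, whereas you first write it abstractly as $\cO_D$ and then unpack; your appeal to Lemma \ref{TypeTParameterSpace}(b) is unnecessary since the $x_i$ are distinct by hypothesis.
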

\begin{proof}
From the proof of Lemma \ref{Agamma} (c) we see that for $1 \le i \le t$, there exists a local parameter $z_i$ at $x_i$ together with local sections $\eta_i^{(1)}$ and $\eta_i^{(2)}$ of $F$ and a local generator $l_i^*$ for $L^{-1}$ such that $A_\gamma$ is the elementary transformation
\[
0 \longrightarrow L^{-1} \otimes \wedge^2 F \longrightarrow A_\gamma \longrightarrow \bigoplus_{i=1}^t \Oc \cdot \frac{l_i \otimes \eta_i^{(1)} \wedge \eta_i^{(2)}}{z_i} \longrightarrow 0 .
\]
In view of Lemma \ref{alternativePsi}, the lemma follows from the associated long exact sequence
\begin{multline}
0 \ \longrightarrow \ H^0 ( C, L^{-1} \otimes \wedge^2 F ) \ \longrightarrow \ H^0 ( C, A_\gamma ) \ \longrightarrow \ \bigoplus_{i=1}^t \cc \cdot \frac{l_i \otimes \eta_i^{(1)} \wedge \eta_i^{(2)}}{z_i} \\ 
\longrightarrow \ H^1 ( C, L^{-1} \otimes \wedge^2 F) \ \longrightarrow \ H^1( C, A_\gamma) \ \longrightarrow \ 0 . \qedhere \label{syzygy}
\end{multline}
\end{proof}

\begin{remark} \label{syzygies} Suppose $H^0 (C, L^{-1} \otimes \wedge^2 F) = 0$. Then by exactness of (\ref{syzygy}), we see that $H^0 (C, A_\gamma)$ is the module of linear relations among the points $\psi ( \sigma_i )$ in $\pp H^1 (C, L^{-1} \otimes \wedge^2 F )$. \end{remark}

\section{The saturated components in the even rank case} \label{EvenRank}

Throughout this section, $V$ will be an $L$-valued orthogonal bundle of rank $2n$ and determinant $L^n$. In this section we shall prove the irreducibility result Theorem \ref{Even} for $V$. The approach is essentially the same as in \cite[{\S} 4]{CCH1}, but the proof is more delicate due to the nonsaturated components described in the previous section.

\subsection{Surjectivity of evaluation maps} For each $x \in C$, recall that the \textsl{evaluation map} $\ev_x \colon \IQe \dashrightarrow \OG ( V|_x )$ is the rational map taking an isotropic subsheaf to the image of the vector bundle map $E|_x \to V|_x$. The map $\ev_x$ is defined at $E \in \IQe$ if and only if $E$ is saturated at $x$.

\begin{lemma} \label{evdom} For $\delta \in \{ 1 , 2 \}$, there exists an integer $f_\delta$ such that $\ev_x \colon \IQo_{f_\delta}(V)_\delta \to \OG(V)_\delta|_x$ is surjective for general $x \in C$. \end{lemma}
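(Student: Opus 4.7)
My plan is to use the orthogonal extension parameterization of Section \ref{OrthExt} together with Serre vanishing to obtain dominance of $\ev_x$, and then to promote dominance to surjectivity via a covering argument with multiple auxiliary subbundles.

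Fix $\delta \in \{1,2\}$. By Proposition \ref{OGVTwoCpts} and Theorem \ref{ParityComponents}, both components of $\OG(V)$ admit sections by rank $n$ isotropic subbundles; I will select an auxiliary $F_0 \subset V$ defining a section of $\OG(V)_{\delta_0}$, where $\delta_0 = \delta$ when $n$ is even and $\delta_0 \ne \delta$ when $n$ is odd. By Lemma \ref{GH}, this parity-sensitive choice ensures that a generic $\Lambda \in \OG(V|_x)_\delta$ satisfies $\Lambda \cap F_0|_x = 0$. For $x$ outside the finite support of the principal part produced by Lemma \ref{orthext}, I identify $V \cong V^p$; Proposition \ref{cohomlifting}(a) then parameterizes the rank $n$ isotropic subbundles of $V$ intersecting $F_0$ generically in zero by $\beta \in \Rat(L^{-1} \otimes \wedge^2 F_0)$ via $E_\beta = \overline{\Gamma_\beta} \cap V^p$. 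A direct computation from the defining relation (\ref{V}), using that $p$ is regular at $x$, shows that for $\beta$ regular at $x$ the fiber $E_\beta|_x$ is precisely the graph of $\beta(x) \in (L^{-1} \otimes \wedge^2 F_0)|_x$. For an effective divisor $D$ on $C$ disjoint from $x$ of sufficiently large degree, Serre vanishing $H^1(C, L^{-1}(D - x) \otimes \wedge^2 F_0) = 0$ forces the evaluation
\[
H^0(C, L^{-1}(D) \otimes \wedge^2 F_0) \;\longrightarrow\; (L^{-1} \otimes \wedge^2 F_0)|_x
\]
to be surjective, so every $\Lambda$ in the open Schubert cell $\{\Lambda \cap F_0|_x = 0\}$ of $\OG(V|_x)_\delta$ is realized as $E_\beta|_x$ for some $\beta$. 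By Proposition \ref{cohomlifting}(b), the degree of $E_\beta \cong \Ker(p - \overline{\beta})$ is uniformly bounded in terms of $\deg F_0$, $\deg L$, and $\deg D$.

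The main obstacle is passing from this open-cell dominance to full surjectivity of $\ev_x$ onto $\OG(V|_x)_\delta$ under a single integer $f_\delta$. I plan to resolve this in two complementary steps. First, running the entirely symmetric version of the above construction, with auxiliary subbundle now chosen in the other component, shows that $\ev_x$ is also dominant onto $\OG(V|_x)_{\delta_0}$; hence the fibers $F|_x$ of rank $n$ isotropic subbundles $F \subset V$ in $\OG(V)_{\delta_0}$ sweep out a Zariski dense subset of $\OG(V|_x)_{\delta_0}$. For any $\Lambda \in \OG(V|_x)_\delta$ lying in the Schubert boundary of the original $F_0$, the set $\{F \in \OG(V|_x)_{\delta_0} : F \cap \Lambda = 0\}$ is a nonempty Zariski open (again by Lemma \ref{GH} combined with the choice of $\delta_0$), and must therefore meet this dense image, yielding an alternative auxiliary $F_0^{(\Lambda)}$ that places $\Lambda$ in its own open Schubert cell. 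Quasi-compactness of $\OG(V|_x)_\delta$ ensures that finitely many such auxiliaries $F_0^{(1)}, \ldots, F_0^{(k)}$ suffice to cover everything. Second, to obtain a single common degree $f_\delta$, I will use that within the affine fiber $\{\beta : \beta(x) = \omega_\Lambda\}$ one may add any $\gamma \in H^0(C, L^{-1}(D') \otimes \wedge^2 F_0)$ vanishing at $x$ without affecting $E_\beta|_x$, while decreasing $\deg E_\beta$ by enlarging its principal part; enlarging $D$ and iterating then makes any target degree below the uniform bound attainable. Taking the maximum of the resulting uniform bounds across the finite family of auxiliary subbundles then produces the required $f_\delta$.
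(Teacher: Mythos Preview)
Your approach is genuinely different from the paper's, and considerably more elaborate. The paper exploits Lemma~\ref{EvenRankExistence}(4) directly: over a Zariski open $U\subseteq C$ there is an orthonormal frame, so $\OG(V)|_U\cong U\times\OG(n,2n)$, and each $\Lambda\in\OG(n,2n)_\delta$ determines a \emph{constant} rank~$n$ isotropic subbundle of $V|_U$, which extends uniquely to a subbundle $F_\Lambda\subset V$. One then sets $f_\delta:=\min_\Lambda \deg F_\Lambda$; by semicontinuity this minimum is achieved on a dense open subset of $\OG(n,2n)_\delta$, and for $x\in U$ one has $\ev_x(F_\Lambda)=\Lambda$. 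This two-line argument already yields that $\ev_x$ has dense image in $\OG(V|_x)_\delta$ for $x\in U$, which is precisely what is invoked in Lemma~\ref{QFnonempty} and Proposition~\ref{covering}. Your machinery of orthogonal extensions, Serre vanishing and Schubert-cell coverings is not needed. (In fact the paper's own argument, as written, also yields dense image rather than literal surjectivity; this is harmless for the applications.)

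Your first step, producing via $\beta\in H^0(C,L^{-1}(D)\otimes\wedge^2 F_0)$ a family of isotropic subbundles whose fibres at $x$ sweep out the open Schubert cell, is a legitimate alternative route to dominance. The weak point is the degree-control step. You assert that adding $\gamma$ with $\gamma(x)=0$ decreases $\deg E_\beta$ and that ``any target degree below the uniform bound'' is then attainable, but several things are left unverified: that one can always find such $\gamma$ with a pole of \emph{decomposable} residue at a fresh point (so that the degree drops by exactly~$2$ rather than some larger even number), that the initial $\deg E_{\beta_0}$ admits a lower bound uniform in $\Lambda$ (so that a common $f_\delta$ exists), and that parity matches. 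The phrase ``taking the maximum of the resulting uniform bounds'' is in the wrong direction: you need $f_\delta$ \emph{below} the infimum of the achievable starting degrees across all auxiliaries. These issues are repairable, but as written the degree-adjustment paragraph is not a proof. Given that the trivialisation argument of Lemma~\ref{EvenRankExistence}(4) dispenses with all of this in one stroke, you should adopt it.
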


\begin{proof} Since we are assuming $\det(V) = L^n$, by Lemma \ref{EvenRankExistence} there is a Zariski open subset $U \subseteq C$ over which we have an orthonormal frame for $V$. Then also $\OG(V)|_U \cong U \times \OG(n, 2n)$, and we obtain a labeling of the components $\OG(n, 2n)_\delta$ induced from $\OG(V)_\delta$.

Now each $\Lambda \in \OG(n, 2n)$ yields a rank $n$ isotropic subbundle of $V|_U$. Since $C$ is a curve, this extends uniquely to a rank $n$ subbundle $F_\Lambda$ of $V$. For each $\delta$, set
\[ f_\delta \ := \ \min \left\{ \deg \left( F_\Lambda \right) : \Lambda \in \OG( n, 2n )_\delta \right\} . \]
By semicontinuity (using the fact that the universal sheaf over any Quot scheme is flat), $\deg(F_\Lambda) = f_\delta$ for $\Lambda$ belonging to a dense open subset of the component $\OG(n, 2n)_\delta$. Then $\ev \colon \IQ_{f_\delta}^\circ (V) \to \OG ( V )_\delta|_x$ is surjective for $x \in U$.
\end{proof}

The following notation will be convenient for giving a unified treatment of all combinations of parities of $\ell$ and $n$.

\begin{definition} \label{deltapr} For $\delta \in \{ 1 , 2 \}$ and $E \in \IQe_\delta$, let $\dpr \in \{ 1, 2 \}$ be such that the component $\OG ( V )_{\dpr}$ contains those subspaces intersecting a generic fiber of $\bE$ in dimension zero. By Lemma \ref{GH}, we have $\dpr \equiv \delta + n \mod 2$. Note also that $(\delta')' = \delta$. \end{definition}

\subsection{Open cells in isotropic Quot schemes}

Following the treatment of the symplectic case in \cite[{\S} 3]{CCH1}, we shall now associate to any rank $n$ isotropic subbundle of $V$ a cell in $\IQe$. 

\begin{definition} For any rank $n$ isotropic subbundle $F \subset V$, we define
\[ \QF \ := \ \{ E \in \IQe : \rank ( E \cap F ) = 0 \hbox{ and } \bE / E \hbox{ is of type } \cT \} . \]
Note that $\QF$ also depends on the particular inclusion $F \hookrightarrow V$, but it will not be necessary to specify this in the notation. If $\QF$ is nonempty then, by Definition \ref{deltapr}, we have the key property that \emph{$\QF \subseteq \IQe_\delta$ if and only if the subbundle $F$ belongs to $\OG(V)_\dpr$}.
\end{definition}

\begin{lemma} \label{QFnonempty} Suppose $e \le f_\delta$ and $e \equiv f_\delta \mod 2$. Then $\QF$ is nonempty. \end{lemma}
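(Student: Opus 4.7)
The strategy is to construct an element of $\QF$ as a type-$\cT$ elementary transformation of a rank $n$ isotropic subbundle $E_0 \subset V$ of minimal degree $f_\delta$ in $\IQ^\circ_{f_\delta}(V)_\delta$, chosen generically transverse to $F$. The parity hypothesis $e \equiv f_\delta \pmod{2}$ ensures the required degree drop is even, which is consistent with the type $\cT$ condition.

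To carry this out, I first locate $E_0 \in \IQ^\circ_{f_\delta}(V)_\delta$ with $\rank(E_0 \cap F) = 0$. By Lemma \ref{evdom}, fix a general point $x \in C$ at which $\ev_x \colon \IQ^\circ_{f_\delta}(V)_\delta \to \OG(V)_\delta|_x$ is surjective. By the hypothesis on $F$ one has $F \in \OG(V)_{\dpr}$, and Definition \ref{deltapr} gives $\dpr \equiv \delta + n \pmod{2}$. Thus Lemma \ref{GH} permits the existence of $\Lambda \in \OG(V)_\delta|_x$ with $\Lambda \cap F|_x = 0$: whether $n$ is even or odd, the parity constraint on $\dim(\Lambda \cap F|_x)$ forces it to be even, so dimension zero is allowed (and in fact generic) in $\OG(V)_\delta|_x$. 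Pick such a $\Lambda$ and lift it to an $E_0$ via the surjectivity of $\ev_x$. The sheaf $E_0 \cap F$ is then a torsion-free subsheaf of $V$ with zero fiber at $x$, hence vanishes, so $\rank(E_0 \cap F) = 0$.

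Set $t := (f_\delta - e)/2$, a non-negative integer by hypothesis. If $t = 0$, take $E := E_0$; otherwise, by Lemma \ref{TypeTParameterSpace}(a) the locus $\cT^{2t}(E_0) \subseteq \Elm^{2t}(E_0)$ is non-empty, and any $[E \hookrightarrow E_0]$ in it yields, via composition with $E_0 \hookrightarrow V$, a rank $n$ isotropic subsheaf of $V$ of degree $f_\delta - 2t = e$. In either case, $E_0$ is already saturated in $V$, so $\bE = E_0$ and $\bE/E = E_0/E$ is of type $\cT$ by construction. Finally, $\rank(E \cap F) \le \rank(E_0 \cap F) = 0$, whence $[E \to V] \in \QF$.

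The principal obstacle is step one, namely securing the existence of $E_0$ disjoint from $F$. This hinges on the compatibility between the parity convention of Definition \ref{deltapr}, the fiberwise component structure of $\OG(V)$ encoded in Lemma \ref{GH}, and the surjectivity of the evaluation map on the minimal-degree stratum provided by Lemma \ref{evdom}; the remainder of the argument is a direct application of the type-$\cT$ machinery developed in Section \ref{EvenRankComponents}.
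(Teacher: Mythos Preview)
Your proof is correct and follows essentially the same approach as the paper's: use the surjectivity of $\ev_x$ from Lemma \ref{evdom} to produce an isotropic subbundle $E_0 \in \IQ^\circ_{f_\delta}(V)_\delta$ whose fiber at $x$ meets $F|_x$ trivially (so $\rank(E_0 \cap F)=0$), and then pass to a type-$\cT$ elementary transformation of $E_0$ to reach degree $e$. The paper's argument is simply a terser version of yours; your additional discussion of why a $\Lambda \in \OG(V)_\delta|_x$ with $\Lambda \cap F|_x = 0$ exists (via the parity relation $\dpr \equiv \delta + n \pmod 2$ and Lemma \ref{GH}) makes explicit what the paper leaves implicit.
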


\begin{proof} If $e = f_\delta$ then by Lemma \ref{evdom} we can find an isotropic subbundle $E_1 \subset V$ of degree $f_\delta$ such that $E_1|_x \cap F|_x = 0$ for some, and hence for generic $x \in C$, so $E_1 \in Q_F^{f_\delta}$. If $e < f_\delta$ then, as $e \equiv f_\delta \mod 2$, the locus $\cT^{f_\delta - e} ( E_1 )$ is nonempty and injects naturally into $\QF$. \end{proof}

We now introduce an important tool for studying $\QF$. Given $[j \colon E \to V] \in \QF$, by composing with $\pi \colon V \to F^* \otimes L$ we obtain an elementary transformation $\pi \circ j \colon E \to F^* \otimes L$. This defines a morphism $\pi_* \colon \QF \to \Elm^{-e - f + n\ell} (F^* \otimes L)$, where $f := \deg (F)$. We write $\wj$ for the map $\pi \circ j$.

\begin{lemma} \label{FirstPptiesQF} Let $F \subset V$ be as above, and let $\delta \in \{ 1, 2 \}$ be such that $F$ belongs to $\OG (V)_\dpr$.
\begin{enumerate}
\item[(a)] For any $E \in \QF$, the elementary transformation $\wj \colon E \to F^* \otimes L$ is of type $\cT$. In particular, $\QF$ is nonempty only if $-e -f + n\ell \equiv 0 \mod 2$.
\item[(b)] 
If $X$ is a component of $\IQe_\delta$ whose generic element is a saturated subsheaf, then $\QF$ is open in $X$.
\end{enumerate}
\end{lemma}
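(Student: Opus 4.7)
For part (a), the plan is to reduce to Lemma \ref{lemmaTypeT} via the saturation and combine two filtrations. Given $E \in \QF$ with saturation $\bE$, the subbundle $\bE$ still satisfies $\rank(\bE \cap F) = 0$, so by Lemma \ref{lemmaTypeT} applied to the pair $(\bE, F)$, the quotient $(F^* \otimes L)/\wj(\bE)$ is of type $\cT$. I would then use the natural short exact sequence
\[ 0 \ \to \ \bE/E \ \to \ (F^* \otimes L)/\wj(E) \ \to \ (F^* \otimes L)/\wj(\bE) \ \to \ 0 , \]
noting that $\bE/E$ is of type $\cT$ by hypothesis, and that being of type $\cT$ is preserved under extensions: concatenating a filtration of $(F^* \otimes L)/\wj(\bE)$ by $\cO_{x_i} \otimes \cc^2$'s with a lift of a similar filtration of $\bE/E$ yields a filtration of $(F^* \otimes L)/\wj(E)$ of the required form. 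The parity statement then follows from the observation that any torsion sheaf of type $\cT$ has even length (each graded piece has length $2$), combined with $\mathrm{length}((F^* \otimes L)/\wj(E)) = -f + n\ell - e$.

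For part (b), the plan is to note that on $X$ the type $\cT$ condition is automatic, so $\QF \cap X$ is cut out purely by the intersection-rank condition. Since by hypothesis $X \cap \IQeo$ is nonempty and open in the irreducible $X$, it is dense; hence every point of $X$ is a limit of saturated isotropic subbundles parameterized by $X$. Proposition \ref{Smoothable} (the restatement of Theorem \ref{TypeT}(a)) then guarantees that $\bE/E$ is of type $\cT$ for every $E \in X$.

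It remains to argue that the condition $\rank(E \cap F) = 0$ is open on $X$. Using the universal subsheaf $\cE \to X \times C$ together with the fixed inclusion $F \hookrightarrow V$, the map $\cE \oplus (\pi_X^* F) \to \pi_X^* V$ has lower semicontinuous rank, so its kernel, which equals $\cE \cap \pi_X^* F$ fiberwise, has upper semicontinuous rank in $z \in X$. The locus $\{ z \in X : \rank(\cE_z \cap F) = 0\}$ is therefore open, and coincides with $\QF \cap X$ by the previous paragraph.

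The main obstacle is the semicontinuity argument in part (b): one must be careful that $\cE \cap \pi_X^* F$ is treated as the sheaf-theoretic intersection inside $\pi_X^* V$ (not a set-theoretic one), and that this is compatible with restriction to fibers of $\pi_X$ in a way that makes upper semicontinuity of generic rank applicable. Once the sheaf-theoretic setup is phrased correctly, both parts reduce to invoking previously established results (Lemma \ref{lemmaTypeT} and Proposition \ref{Smoothable}) together with the stability of ``type $\cT$'' under extensions.
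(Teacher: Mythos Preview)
Your proof is correct and follows essentially the same approach as the paper: the same short exact sequence and extension-closure argument for (a), and Proposition~\ref{Smoothable} for the type~$\cT$ condition in (b). For the openness of the rank-zero-intersection condition in (b), the paper streamlines your semicontinuity argument by noting that $\{E \in X : \rank(E \cap F) = 0\}$ is precisely the domain of definition of the map $\pi_* \colon E \mapsto [\wj \colon E \to F^* \otimes L]$, which is the same underlying fact phrased more compactly.
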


\begin{proof} (a) Firstly, $\bE / E$ is of type $\cT$ by definition of $\QF$. Write $\overline{j}$ for the natural extension of $j$ to $\bE \to V$. Then $\frac{F^* \otimes L}{\left( \widetilde{\overline{j}} \right) (\bE)}$ is of type $\cT$ by Proposition \ref{Smoothable}. Now it is easy to check that an extension of torsion sheaves of type $\cT$ is of type $\cT$. Hence the first statement follows from the short exact sequence
\[ 0 \ \to \ \frac{\left( \bE \right)}{E} \ \to \ \frac{F^* \otimes L}{\wj (E)} \ \to \ \frac{F^* \otimes L}{\widetilde{\left( \overline{j} \right)}\left( \bE \right)} \ \to \ 0 . \]
For the rest; as a torsion sheaf of type $\cT$ has even degree,
\[ \deg ( F^* \otimes L ) - \deg (E) \ = \ -e - f + n\ell \ \equiv \ 0 \mod 2 . \]

(b) The set $\{ [ E \to V ] \in X : \rank ( F \cap E ) = 0 \}$ is open in $X$, being precisely the domain of definition of $\pi_*|_X$. As a general point of $X$ is saturated, by Proposition \ref{Smoothable} the condition that $\bE/E$ be of type $\cT$ is satisfied at all points of $X$, so $\QF$ is open in $X$. \end{proof}

Note that $\bE / E$ being of type $\cT$ is only a necessary condition for $E$ to belong to $\bIQeo$ (cf.\ Remark \ref{SmoothabilityCounterexample}). Thus, a priori $\QF$ may not be contained in $\bIQeo$. However, we shall see below in Proposition \ref{QFoDense} that for $e \ll 0$, the type $\cT$ condition is also sufficient. We will use this fact in the proof of Theorem \ref{Even}, and also in Corollary \ref{compactification} to characterize the points in $\bIQeo \backslash \IQeo$.\\
\par
In view of Lemma \ref{FirstPptiesQF} (a), \textbf{we shall henceforth assume that $-e-f+n\ell \equiv 0 \mod 2$}, and write $-e-f+n\ell = 2t$.\\
\par

Next, we define a subset of $\QF \subset \IQe$ with even more desirable properties.

\begin{definition} \label{defnQFo} Let $\QFo$ be the subset of $\QF$ of those $[j \colon E \to V]$ satisfying the following:
\begin{enumerate}
\item[(i)] $E$ is saturated; that is, $j$ is a vector bundle injection.
\item[(ii)] $(F^* \otimes L) / \left( \wj(E) \right) \cong \cO_D \otimes \cc^2$ for a reduced divisor $D$ of degree $t$.
\item[(iii)] $h^1 (C, A_{\wj} ) = 0$, where $A_{\wj}$ is as in Definition \ref{defAgamma}.
\end{enumerate}
\end{definition}

\begin{remark} \label{PropertiesQFe0} \quad 
\begin{enumerate}
\item[(a)] Note that conditions (ii) and (iii) depend only on the map $\wj \colon E \to F^* \otimes L$, and not on $V$.
\item[(b)] Condition (i) is clearly open on $\QF$. By Lemma \ref{Agamma} (c), the family of sheaves over $\cT^{2t} ( F^* \otimes L )$ with fiber $A_\wj$ at $\wj$ is flat over the open subset of $\cT^{2t} ( F^* \otimes L )$ defined by condition (ii) (although, by Remark \ref{nonreduced}, not over all of $\cT^{2t} ( F^* \otimes L )$). Thus conditions (ii) and (iii) together define an open subset of $\QF$. Hence $\QFo$ is open in $\QF$.
\item[(c)] If $h^1 ( C, L^{-1} \otimes \wedge^2 F ) = 0$ then condition (iii) follows automatically by (\ref{syzygy}). Otherwise, note that by (ii), the elementary transformation $E \subset F^* \otimes L$ is determined by a choice of $\sigma_1 , \ldots , \sigma_t \in \Gr ( 2, F )$, and by Lemma \ref{vanishinglemma}, property (iii) is equivalent to the images of these points spanning $\pp H^1 ( C, L^{-1} \otimes \wedge^2 F)$.
\end{enumerate}
\end{remark}

\begin{proposition} \label{QFoSmooth} Suppose $[j \colon E \to V]$ is a point of $\QFo$.
\begin{enumerate}
\item[(a)] We have $h^1 ( C, L \otimes \wedge^2 E^* ) = 0$.
\item[(b)] Both $\QFo$ and $\IQeo$ are smooth of dimension $I(n, \ell, e)$ at $[j \colon E \to V]$.
\end{enumerate}
\end{proposition}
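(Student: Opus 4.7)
The plan is to derive (a) as an immediate consequence of Lemma \ref{Agamma}(b) together with condition (iii) of Definition \ref{defnQFo}, and then to deduce (b) from (a) using Proposition \ref{ExpDimSmoothness}(b) combined with an openness argument.

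For part (a), I would invoke Lemma \ref{Agamma}(b) to obtain a short exact sequence
\[ 0 \ \to \ A_{\wj} \ \to \ L \otimes \wedge^2 E^* \ \to \ \tau_2 \ \to \ 0 \]
with $\tau_2$ a torsion sheaf, so $h^1(C, \tau_2) = 0$. By condition (iii) of Definition \ref{defnQFo} we have $h^1(C, A_{\wj}) = 0$, and the associated long exact sequence in cohomology then yields $h^1(C, L \otimes \wedge^2 E^*) = 0$.

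For part (b), the smoothness of $\IQeo$ at $j$ of dimension $I(n, \ell, e)$ is now immediate from Proposition \ref{ExpDimSmoothness}(b). To transfer this to $\QFo$, it suffices to verify that $\QFo$ is open in $\IQeo$. Condition (i) of Definition \ref{defnQFo} identifies $\QFo$ as a subset of $\IQeo$. The condition $\rank(E \cap F) = 0$ inherited from $\QF$ is open, being the domain of definition of $\pi_*$. Condition (ii) is open on the space of elementary transformations of length $2t$ into $F^* \otimes L$, since it is the preimage of the complement of the big diagonal in $C^{(t)}$ under the support map (cf.\ Lemma \ref{TypeTParameterSpace}(b)). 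As noted in Remark \ref{PropertiesQFe0}(b), Lemma \ref{Agamma}(c) guarantees that over the open locus where (ii) holds, the sheaves $A_{\wj}$ fit into a flat family, so semicontinuity makes condition (iii) open there as well. Thus $\QFo$ is open in $\IQeo$ and inherits smoothness of the correct dimension at $j$.

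The only mildly subtle point is the flatness underlying the openness of (iii): as Remark \ref{nonreduced} illustrates, without the reducedness constraint of (ii) the torsion quotient $A_{\wj}/(L^{-1} \otimes \wedge^2 F)$ may have unexpected length, so the family $\{A_{\wj}\}$ need not be flat over all of $\cT^{2t}(F^* \otimes L)$. Restricting to the locus where (ii) is satisfied circumvents this, and no further input is needed beyond the results already established in Sections \ref{IsotQuotSch} and \ref{OrthExtII}.
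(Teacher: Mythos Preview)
Your proof is correct and follows essentially the same route as the paper. For (a) you invoke Lemma~\ref{Agamma}(b) together with condition~(iii), which is in fact the precise ingredient needed (the paper's citation of the sequence~(\ref{syzygy}) is slightly imprecise here, since that sequence compares $A_\gamma$ with $L^{-1}\otimes\wedge^2 F$ rather than with $L\otimes\wedge^2 E^*$); for (b) both you and the paper use Proposition~\ref{ExpDimSmoothness}(b) and the openness of $\QFo$ in $\IQeo$, with your argument simply unpacking what the paper records as Lemma~\ref{FirstPptiesQF}(b) and Remark~\ref{PropertiesQFe0}(b).
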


\begin{proof} Part (a) follows from condition (iii) and the sequence (\ref{syzygy}). Part (b) follows from part (a) by Proposition \ref{ExpDimSmoothness} (c), and noting that $\QFo$ is open in $\IQeo$ by Lemma \ref{FirstPptiesQF} (b) and Remark \ref{PropertiesQFe0} (b). \end{proof}

\begin{proposition} \label{dominant} Suppose that $\QFo$ is nonempty. For any component $X$ of $\QFo$, the map $\pi_* \colon X \to \cT^{2t}(F^* \otimes L)$ is dominant and has irreducible fibers. \end{proposition}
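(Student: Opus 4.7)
The plan is to combine three ingredients: the torsor description of the fibers of $\pi_*$ (Lemma \ref{DifferentLiftings}), the smoothness of $\QFo$ at every point (Proposition \ref{QFoSmooth}(b)), and a dimension count derived from the cohomology sequence (\ref{syzygy}) together with Lemma \ref{TypeTParameterSpace}(c).

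First, I would describe the structure of the fibers of $\pi_*$. Let $U \subseteq \cT^{2t}(F^* \otimes L)$ denote the open subset consisting of those $\gamma$ that satisfy conditions (ii) and (iii) of Definition \ref{defnQFo}; by Remark \ref{PropertiesQFe0}(b) we have $\pi_*(\QFo) \subseteq U$. For any $\gamma \in U$, Lemma \ref{DifferentLiftings} identifies the set $\pi_*^{-1}(\gamma) \cap \QF$ of isotropic liftings of $\gamma$ with a torsor over the vector space $H^0(C, A_\gamma)$, that is, an affine space of dimension $h^0(C, A_\gamma)$. Since the saturation condition (i) defining $\QFo$ is open in $\QF$, the fiber $\pi_*^{-1}(\gamma) \cap \QFo$ is an open subset of this affine space, and in particular irreducible whenever nonempty.

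Next, I would establish dominance by a dimension count. By Proposition \ref{QFoSmooth}(b), every irreducible component $X$ of $\QFo$ has dimension $I(n, \ell, e)$. By Lemma \ref{TypeTParameterSpace}(c), $\dim \cT^{2t}(F^* \otimes L) = t(2n-3)$, and from the cohomology sequence (\ref{syzygy}) together with the hypothesis $h^1(C, A_\gamma) = 0$ we read off
\[
h^0(C, A_\gamma) \ = \ \chi(C, L^{-1} \otimes \wedge^2 F) + t .
\]
Combining these with the relation $2t = -e - f + n\ell$ and Riemann--Roch, a direct calculation yields
\[
\dim \cT^{2t}(F^* \otimes L) + h^0(C, A_\gamma) \ = \ I(n, \ell, e) .
\]
Since each fiber of $\pi_*$ on $\QFo$ has dimension at most $h^0(C, A_\gamma)$ and $\cT^{2t}(F^* \otimes L)$ is irreducible by Lemma \ref{TypeTParameterSpace}(a), this equality of dimensions forces $\pi_*|_X$ to be dominant.

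Finally, to deduce the irreducibility of a generic fiber of $\pi_*|_X$: by upper semicontinuity of fiber dimension together with dominance and the dimension identity above, a generic fiber of $\pi_*|_X$ has dimension exactly $h^0(C, A_\gamma)$. As a closed subvariety of the irreducible variety $\pi_*^{-1}(\gamma) \cap \QFo$ of the same dimension, it must coincide with this whole fiber, and is therefore irreducible. The main technical point is verifying the dimension identity displayed above; this reduces, via the exact sequences from Section \ref{OrthExtII}, to the Riemann--Roch computation indicated, with the substitution $2t = -e - f + n\ell$ providing the bookkeeping.
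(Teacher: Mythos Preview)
Your proposal is correct and follows essentially the same approach as the paper: both arguments use Lemma \ref{DifferentLiftings} to identify the fibers of $\pi_*$ on $\QFo$ as open subsets of affine spaces of dimension $h^0(C, A_\gamma)$, invoke Proposition \ref{QFoSmooth}(b) for $\dim X = I(n,\ell,e)$, and then compare with $\dim \cT^{2t}(F^*\otimes L) = t(2n-3)$ via the same Riemann--Roch computation to deduce dominance. The only cosmetic difference is that the paper phrases the count as $\dim \pi_*(X) \ge \dim X - \chi(C, A_{\wj})$ and simplifies this to $t(2n-3)$, whereas you verify the equivalent identity $\dim \cT^{2t}(F^*\otimes L) + h^0(C, A_\gamma) = I(n,\ell,e)$ directly; your additional final paragraph, arguing that a generic fiber of $\pi_*|_X$ must fill the whole irreducible fiber in $\QFo$, is a welcome clarification that the paper leaves implicit.
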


\begin{proof} Let $[j \colon E \to V]$ be a point of $X$, so $\left[ \wj \colon E \to F^* \otimes L \right] \in \Image (\pi_*)$. By Proposition \ref{DifferentLiftings} and Definition \ref{defAgamma}, the fiber $\pi_*^{-1} \left( \wj \right)$ is an open subset of a torsor over $H^0 ( C, A_\wj )$. In particular, it is irreducible.

Let us compute $\dim \pi_*^{-1} \left( \wj \right) = h^0 ( C, A_\wj )$. By definition of $\QFo$, we have $h^1 ( C, A_\gamma ) = 0$, so $\dim \pi_*^{-1} \left( \wj \right) = \chi ( C, A_\wj )$. Thus, using Proposition \ref{QFoSmooth} (b), we have
\begin{multline} \dim \pi_* (X) \ \ge \ \dim \QFo - \dim \pi_*^{-1} \left( \wj \right) \ = \ \chi ( C, L \otimes \wedge^2 E^* ) - \chi ( C, A_\wj ) \\
 = \ \deg( L \otimes \wedge^2 E^* ) - \deg ( A_\wj ), \label{dimpistarX} \end{multline}
the last equality because the bundles have the same rank. By condition (iii) and Proposition \ref{Agamma} (c) we have
\[ \deg ( A_\wj ) \ = \ \deg ( L^{-1} \otimes \wedge^2 F ) + t \ = \ (n-1)f - \frac{n(n-1)}{2}\cdot \ell + t , \]
so the number (\ref{dimpistarX}) is
\begin{multline*} \left( \frac{n(n-1)}{2} \cdot \ell - (n-1)e \right) - \left( \frac{n(n-1)}{2} ( -\ell ) + (n-1) f + t \right) \\
 = \ (n-1) ( - e - f + n \ell ) - t \ = \ t (2n-3) \end{multline*}
since $-e-f+n\ell = 2t$. But $\cT^{2t} ( F^* \otimes L )$ is irreducible of dimension $t (2n - 3)$ by Lemma \ref{TypeTParameterSpace}. Therefore, $\pi_*|_X$ is dominant. \end{proof}

In view of conditions (ii) and (iii), by (\ref{syzygy}) the locus $\QFo$ is nonempty only if $t \ge h^1 ( C, L^{-1} \otimes \wedge^2 F )$. In fact we shall later require the strict inequality
\begin{equation} t \ = \ \frac{1}{2} ( - e - f + n\ell) \ > \ h^1 ( C, L^{-1} \otimes \wedge^2 F ) . \label{BoundOnT} \end{equation}
A computation using Riemann--Roch shows that this is equivalent to
\[ e \ \le \ (2n-3)f - 2 \cdot h^0 ( C, L^{-1} \otimes F ) - n(n - 2) \ell - n(n-1)(g-1) - 1. \]
\begin{definition} \label{defneVfd} We set
\[ \eVfd \ := \ (2n - 3) f - n(n - 2) \ell - n(n-1)(g-1) - 2 \cdot h^0 ( C, L^{-1} \otimes \wedge^2 V ) - 1 . \]
As $h^0 ( C, L^{-1} \otimes \wedge^2 F ) \le h^0 ( C, L^{-1} \otimes \wedge^2 V )$ for any subbundle $F \subset V$, if $e \le \eVfd$ then the condition (\ref{BoundOnT}) is satisfied for \emph{any} $F \in \IQo_f (V)_\dpr$. \end{definition}

As the proof of the next proposition is rather involved, we postpone it to {\S} \ref{QFoDenseProof}.

\begin{proposition} \label{QFoDense} Suppose $e \le \min\{ \eVfd , f_\delta \}$ and $-e-f+n\ell \equiv 0 \mod 2$. 
 Then $\QFo$ is nonempty and dense in $\QF$. In particular, $\QF$ is contained in $\bIQeo$. \end{proposition}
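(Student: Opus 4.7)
The plan is two-stage: first establish nonemptiness of $\QFo$ by an explicit construction using a general configuration of points in $\Gr(2, F)$, then prove density of $\QFo$ in $\QF$ by stratifying $\QF$ according to the colength of the saturation $\bE/E$ and analyzing the map $\pi_* \colon \QF \to \cT^{2t}(F^* \otimes L)$.

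For the nonemptiness, note that the definition of $\eVfd$ is arranged precisely so that $e \le \eVfd$ yields the strict inequality $t > h^1(C, L^{-1} \otimes \wedge^2 F)$ for every rank $n$ isotropic subbundle $F$. I would choose generic $\sigma_1, \ldots, \sigma_t \in \Gr(2, F)$ lying above distinct points $x_1, \ldots, x_t \in C$. By nondegeneracy of the image of $\psi \colon \Gr(2, F) \dashrightarrow \pp H^1(C, L^{-1} \otimes \wedge^2 F)$, the vectors $\psi(\sigma_i)$ span $\pp H^1$, and $t > h^1$ makes the affine space of representations $[V] = \sum c_i \psi(\sigma_i)$ positive-dimensional; a further generic choice arranges all $c_i$ to be nonzero. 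Setting $p := \sum c_i \cdot \frac{l^*_i \otimes \omega_i}{z_i}$ for $\omega_i \in \wedge^2 F|_{x_i}$ representing $\sigma_i$, Lemma \ref{alternativePsi} gives $[p] = [V]$, so Lemma \ref{orthext} identifies $V \cong V^p$. Then $E := \Ker(p \colon F^* \otimes L \to \sPrin(F)) \subset V^p$ is a saturated rank $n$ isotropic subbundle of degree $e$ intersecting $F$ generically in zero, with reduced cokernel $\mathcal{O}_D \otimes \cc^2$ (condition (ii)) and $h^1(C, A_{\wj}) = 0$ by Lemma \ref{vanishinglemma} (condition (iii)); hence $E \in \QFo$.

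For density, I would stratify $\QF = \bigsqcup_{k \ge 0} \QF^{(k)}$ by the integer $k := \frac{1}{2}\deg(\bE/E)$. Elements of $\QF^{(k)}$ for $k \ge 1$ are parametrized by pairs consisting of a saturated $\bE \in \IQ^\circ_{e+2k}(V)$ with $\rank(\bE \cap F) = 0$ and an elementary transformation $E \subset \bE$ of type $\cT$ of length $2k$; the latter space $\cT^{2k}(\bE)$ has dimension $k(2n-3)$ by Lemma \ref{TypeTParameterSpace}(c), while (applying Proposition \ref{QFoSmooth} to the generic element of the saturated stratum in degree $e + 2k$ by downward induction on $e$, using that the bound $\eVfd$ is monotone in $f$) the saturated stratum has dimension $I(n, \ell, e + 2k)$. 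Summing gives
\[
\dim \QF^{(k)} = I(n, \ell, e+2k) + k(2n-3) = I(n, \ell, e) - k < \dim \QFo
\]
for $k \ge 1$. So density reduces to showing $\QFo$ is dense in the saturated stratum $\QF^{(0)} = \QF \cap \IQeo$.

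The main obstacle will be verifying that every irreducible component $Y$ of $\QF^{(0)}$ meets $\QFo$. Since $\QFo$ is open in $\QF^{(0)}$ (Remark \ref{PropertiesQFe0}(b)) and $\dim Y \ge I(n, \ell, e)$ (Proposition \ref{ExpDimSmoothness}), this amounts to excluding components on which every $E$ violates (ii) or (iii). I would use the map $\pi_* \colon Y \to \cT^{2t}(F^* \otimes L)$ together with Lemma \ref{DifferentLiftings}, whose generic fibers are torsors over $H^0(C, A_\gamma)$. Exploiting Lemma \ref{Agamma} and the exact sequence (\ref{syzygy}) to relate $h^1(A_\gamma)$ to the dimension of the span of $\psi(\sigma_1), \ldots, \psi(\sigma_t)$, a careful dimension count (tracking how a drop in the span dimension simultaneously decreases $\dim \pi_*(Y)$ and increases $h^0(A_\gamma)$) should give $\dim \pi_*(Y) + h^0(A_\gamma) \le I(n, \ell, e)$, with equality only when generic $\gamma \in \pi_*(Y)$ lies in the dense open subset of $\cT^{2t}(F^* \otimes L)$ defined by (ii) and (iii). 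Combined with $\dim Y \ge I(n, \ell, e)$, this forces $Y \cap \QFo \ne \emptyset$. Thus $\QFo$ is dense in $\QF^{(0)}$, and hence in all of $\QF$; since $\QFo \subseteq \IQeo$, we conclude $\QF \subseteq \overline{\QFo} \subseteq \bIQeo$.
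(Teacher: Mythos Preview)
Your nonemptiness argument is sound and close in spirit to what the paper does: choose $t$ points of $\Gr(2,F)$ in general position and use $t > h^1(C, L^{-1}\otimes\wedge^2 F)$ to realize the class $[V]$ with all coefficients nonzero.

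The density argument, however, has a genuine gap. Your stratification approach needs two ingredients that are not available:
\begin{itemize}
\item The dimension formula $\dim \QF^{(k)} = I(n,\ell,e+2k) + k(2n-3)$ assumes that the locus of saturated isotropic subbundles $\bE$ of degree $e+2k$ with $\rank(\bE\cap F)=0$ has \emph{exactly} the expected dimension $I(n,\ell,e+2k)$. But Proposition~\ref{ExpDimSmoothness} only gives a lower bound, and for $V$ not general there may well be components of $\IQ^\circ_{e+2k}(V)$ of strictly larger dimension (cf.\ Lemma~\ref{VeryStableSmooth}, which requires $V$ general). Your proposed downward induction cannot reach these degrees: for $k$ large enough that $\IQ^\circ_{e+2k}(V)$ is nonempty, $e+2k$ will typically exceed $\eVfd$, so the inductive hypothesis does not apply.
\item Even if the dimension counts were correct, they would not prove density. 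To conclude that no component of $\QF$ lies entirely in some $\QF^{(k)}$ with $k\ge 1$, you would need a lower bound of $I(n,\ell,e)$ on the dimension of \emph{every} component of $\QF$. But $\QF$ is only locally closed in $\IQe$ (Lemma~\ref{FirstPptiesQF}(b) gives openness only in saturated components), so there is no a~priori reason its components inherit the lower bound from Proposition~\ref{ExpDimSmoothness}. A small-dimensional stratum could in principle be a component of $\QF$ disjoint from $\overline{\QFo}$.
\end{itemize}
Your argument for $\QF^{(0)}$ has a related problem: the inequality $\dim\pi_*(Y)+h^0(C,A_\gamma)\le I(n,\ell,e)$ you need is in the wrong direction. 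What you actually get from $\dim Y\ge I(n,\ell,e)$ and the fiber description is $\dim\pi_*(Y)+h^0(C,A_\gamma)\ge I(n,\ell,e)$, which does not force $Y$ to meet $\QFo$.

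The paper avoids all of this by a direct deformation argument: given any $E\in\QF\setminus\QFo$, it uses Lemma~\ref{GoodCoords} to write $V=V^{p_0}$ with $\bE=\Ker(p_0)$, then invokes results from \cite{CCH1} to produce an explicit analytic family $\{p'_s\}_{s\in\Delta}$ of antisymmetric principal parts with $[p'_s]\equiv[p_0]$, such that $p'_s$ is ``general $\hY$-valued'' for $s\neq 0$ and the flat limit of $\Ker(p'_s)$ as $s\to 0$ is exactly $E$. This shows directly that each point of $\QF$ is a limit of points of $\QFo$, bypassing any need to control dimensions of strata.
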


\begin{proposition} If $e \le \eVfd$, then $\QF$ is nonempty and irreducible. \label{QFirr} \end{proposition}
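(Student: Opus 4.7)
The plan is to reduce irreducibility of $\QF$ to irreducibility of the open subscheme $\QFo$, and then exploit the structure of the morphism $\pi_* \colon \QFo \to \cT^{2t}(F^* \otimes L)$ from Proposition \ref{dominant}. Under the hypothesis $e \le \eVfd$ and the standing parity assumption $-e-f+n\ell \equiv 0 \bmod 2$, Proposition \ref{QFoDense} provides that $\QFo$ is nonempty and dense in $\QF$. In particular $\QF$ is nonempty and equals $\overline{\QFo}$, so irreducibility of $\QF$ reduces to irreducibility of $\QFo$.

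To prove $\QFo$ is irreducible, I would combine three ingredients: (i) the base $\cT^{2t}(F^* \otimes L)$ is irreducible by Lemma \ref{TypeTParameterSpace}(a); (ii) by Proposition \ref{dominant}, every irreducible component $X$ of $\QFo$ dominates this base via $\pi_*$; (iii) by Lemma \ref{DifferentLiftings}, each fiber of $\pi_*$ over a point $\wj$ of the image is a (nonempty) open subset of a torsor over the vector space $H^0(C, A_\wj)$, hence irreducible. Granted these, a standard argument applies. Suppose for contradiction that $\QFo$ had distinct irreducible components $X_1, \dots, X_k$ with $k \ge 2$, and set $W_i := X_i \setminus \bigcup_{j \ne i} X_j$, which is nonempty open in $X_i$. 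Since $X_i$ dominates the irreducible base and $W_i$ is dense in $X_i$, the image $\pi_*(W_i)$ is a constructible dense subset of $\cT^{2t}(F^* \otimes L)$, and so contains a nonempty open $U_i$. For $y \in U_i$, the irreducible fiber $\pi_*^{-1}(y)$ is the union of the closed subsets $\pi_*^{-1}(y) \cap X_j$, so one of these coincides with the whole fiber; the existence of a preimage in $W_i$ forces the index to be $i$, so $\pi_*^{-1}(y) \subseteq X_i$. But then the $U_i$ are pairwise disjoint nonempty opens in the irreducible variety $\cT^{2t}(F^* \otimes L)$, a contradiction. Hence $k = 1$, and $\QFo$ is irreducible.

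The main obstacle is really that all the serious geometric work has been carried out in Propositions \ref{dominant} and \ref{QFoDense}: once those are in hand, the argument above is a routine fiber-domination argument, and no further delicate computations are needed. The remaining subtlety is to verify that the hypotheses of Proposition \ref{QFoDense} are indeed met; in particular one must check that the bound $e \le \eVfd$, together with the parity assumption and any implicit compatibility between $\eVfd$ and $f_\delta$ in the statement of the present section, suffices to place us within the scope of that earlier proposition.
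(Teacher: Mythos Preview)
Your proposal is correct and follows essentially the same strategy as the paper: reduce to $\QFo$ via Proposition \ref{QFoDense}, then argue that a dominant map to an irreducible base with irreducible fibers forces the source to be irreducible. The only difference is in the endgame: the paper concludes by observing that two components of $\QFo$ would meet along a dense subset of a generic fiber, contradicting the \emph{smoothness} of $\QFo$ (Proposition \ref{QFoSmooth}), whereas you avoid smoothness entirely and instead produce pairwise disjoint nonempty opens $U_i$ in the irreducible base. Both arguments are standard and equally valid; yours is slightly more elementary in that it does not invoke Proposition \ref{QFoSmooth}.

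Your closing caveat is well taken: Proposition \ref{QFoDense} requires $e \le \min\{\eVfd, f_\delta\}$, while Proposition \ref{QFirr} as stated only assumes $e \le \eVfd$. The paper's own proof invokes Proposition \ref{QFoDense} under the same hypothesis, so this is a looseness in the paper's statement rather than in your argument; in the application to Theorem \ref{Even} one works with $e \le e(V) = \min\{f_1, f_2, e(V, f_1, 1), e(V, f_2, 2)\}$, where both bounds hold.
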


\begin{proof} By Proposition \ref{QFoDense}, it suffices to show that $\QFo$ is irreducible. If $X_1$ and $X_2$ were distinct irreducible components of $Q_F^\circ$, then by Proposition \ref{dominant}, the restriction of $\pi_*$ to either component would be dominant with irreducible fibers. But then $X_1$ and $X_2$ would have to intersect along a dense subset of a generic fiber, contradicting the smoothness of $\QFo$ proven in Proposition \ref{QFoSmooth}. Thus $Q_F^\circ$ is irreducible. \end{proof}

\subsection{Proof of Theorem \ref{Even}}

Let $e$ be an integer satisfying $-e-f_\dpr + n\ell \equiv 0 \mod 2$.

\begin{lemma} The condition $-e-f_\dpr + n\ell \equiv 0 \mod 2$ implies that $e \equiv f_\delta \mod 2$. \label{parityefd} \end{lemma}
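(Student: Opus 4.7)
The plan is to do case analysis on the parity of $\ell$, using Theorem \ref{ParityComponents} to control the parities of $f_\delta$ and $f_\dpr$, together with the defining relation $\dpr \equiv \delta + n \pmod 2$ from Definition \ref{deltapr}.

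First I would reduce the hypothesis. The assumption $-e - f_\dpr + n\ell \equiv 0 \pmod 2$ is equivalent to $e \equiv n\ell - f_\dpr \pmod 2$, so it suffices to show that $f_\delta \equiv n\ell - f_\dpr \pmod 2$ in every case.

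Case 1: $\ell$ even. Here $n\ell \equiv 0 \pmod 2$, so I must show $f_\delta \equiv f_\dpr \pmod 2$. But by Theorem \ref{ParityComponents}(a), when $\deg(L)$ is even every rank $n$ isotropic subbundle has degree congruent to $w(V) \pmod 2$, regardless of which component of $\OG(V)$ its associated section belongs to. In particular, $f_1 \equiv f_2 \equiv w(V) \pmod 2$, so $f_\delta \equiv f_\dpr \pmod 2$, as required.

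Case 2: $\ell$ odd. Here $n\ell \equiv n \pmod 2$, so I must show $f_\delta + f_\dpr \equiv n \pmod 2$. By Theorem \ref{ParityComponents}(b) (and the labeling convention of Definition \ref{labeling}), when $\deg(L)$ is odd the component $\OG(V)_\delta$ contains precisely those rank $n$ isotropic subbundles whose degree is congruent to $\delta \pmod 2$. Hence $f_\delta \equiv \delta \pmod 2$ and $f_\dpr \equiv \dpr \pmod 2$. By Definition \ref{deltapr} we have $\dpr \equiv \delta + n \pmod 2$, so $f_\delta + f_\dpr \equiv \delta + \delta + n \equiv n \pmod 2$, as required.

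The argument is essentially a bookkeeping check and there is no real obstacle; the only thing to be careful about is matching the component labels $\delta$ and $\dpr$ with the parity statements of Theorem \ref{ParityComponents}, since the labeling is arbitrary when $\ell$ is even but rigid when $\ell$ is odd.
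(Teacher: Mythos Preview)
Your proof is correct and uses the same ingredients as the paper's argument: Theorem \ref{ParityComponents} for the parities of $f_\delta$ and $f_\dpr$, and the relation $\dpr \equiv \delta + n \pmod 2$ from Definition \ref{deltapr}. The only difference is the case decomposition: the paper splits into three cases (both $n$ and $\ell$ odd; $\ell$ even; $n$ even), whereas you split into two cases by the parity of $\ell$ alone, handling the $\ell$ odd case uniformly via the labeling convention $f_\delta \equiv \delta \pmod 2$. Your decomposition is marginally cleaner, but the content is the same.
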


\begin{proof} We distinguish three cases. If both $n$ and $\ell$ are odd, then $\dpr \ne \delta$ by Definition \ref{deltapr}. Thus $f_\dpr \equiv f_\delta + 1 \mod 2$ by Theorem \ref{ParityComponents} (b). Then
\[ e - f_\delta \ \equiv \ -e - f_\dpr - 1 \ \equiv -e - f_\dpr + n \ell \ \mod 2 . \]
But $-e -f_\dpr + n \ell$ is even, so $e \equiv f_\delta \mod 2$.

If $\ell$ is even, then $f_\dpr \equiv f_\delta \mod 2$ by Theorem \ref{ParityComponents} (a), and the statement follows. 

Lastly, if $n$ is even, then $\dpr = \delta$, so $f_\delta = f_\dpr$, and the statement follows. \end{proof}

\begin{proposition} \label{covering} Suppose $e \le e( V, f_\dpr, \delta)$ (cf.\ (\ref{defneVfd})) and $- e - f_\dpr + n\ell \equiv 0 \mod 2$. 
\begin{enumerate}
\item[(a)] The collection $\QF : F \in \IQo_{f_\dpr} (V)_{\dpr}$ is an open covering of $\bIQeo_\delta$.
\item[(b)] If furthermore $e \le f_\delta$, then any two members of the covering have nonempty intersection.
\end{enumerate}
\end{proposition}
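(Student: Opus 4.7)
The plan for both parts is to select rank $n$ isotropic subbundles in the appropriate component of $\OG(V)$ whose fiber at a suitably chosen point of $C$ avoids certain prescribed isotropic subspaces. The main ingredients will be Lemma~\ref{evdom} (surjectivity of $\ev_x$ at generic $x$), Definition~\ref{deltapr} together with Lemma~\ref{GH} (which guarantee that $\OG(V)_\dpr|_x$ contains subspaces disjoint from any given $\bE|_x$), Proposition~\ref{Smoothable} (to supply the type $\cT$ condition automatically on the closure), and Lemma~\ref{FirstPptiesQF}~(b) (openness of $\QF$ on saturated components).

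For part (a), I would start by applying Proposition~\ref{Smoothable} to $[E \to V] \in \bIQeo_\delta$ to deduce that $\bE/E$ is of type $\cT$; it then suffices to produce some $F \in \IQo_{f_\dpr}(V)_\dpr$ with $\rank(F \cap \bE) = 0$, since this combined with the type $\cT$ condition places $[E \to V]$ in $\QF$. By Definition~\ref{deltapr}, the open subset $\cU_x \subset \OG(V)_\dpr|_x$ of isotropic subspaces disjoint from $\bE|_x$ is nonempty; for a generic $x$ to which Lemma~\ref{evdom} applies, pulling back any point of $\cU_x$ along the surjection $\ev_x \colon \IQo_{f_\dpr}(V)_\dpr \to \OG(V)_\dpr|_x$ yields the required $F$. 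For the openness claim, each irreducible component of $\bIQeo_\delta$ is the closure of an irreducible component of $\IQeo_\delta$, and therefore coincides with an irreducible component of $\IQe_\delta$ whose generic element is saturated; Lemma~\ref{FirstPptiesQF}~(b) then makes $\QF$ open in each such component, and hence open in $\bIQeo_\delta$.

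For part (b), given $F_1, F_2 \in \IQo_{f_\dpr}(V)_\dpr$, the subset of $\OG(V)_\delta|_x$ consisting of subspaces disjoint from both $F_i|_x$ is the intersection of two nonempty opens in the irreducible fiber $\OG(V)_\delta|_x$, hence is itself nonempty and open. Using Lemma~\ref{evdom} applied to $\IQo_{f_\delta}(V)_\delta$, I would choose $E_1$ in this component whose fiber at a generic $x$ lies in that intersection, so that $\rank(E_1 \cap F_i) = 0$ for $i=1,2$. By Lemma~\ref{parityefd} combined with the hypothesis $e \le f_\delta$, the integer $f_\delta - e$ is a nonnegative even number, so Lemma~\ref{TypeTParameterSpace} ensures $\cT^{f_\delta - e}(E_1)$ is nonempty; any $[E \to E_1]$ in it satisfies $\bE = E_1$, with $\bE/E$ of type $\cT$ and $\rank(E \cap F_i) = \rank(E_1 \cap F_i) = 0$, whence $E \in Q_{F_1}^e \cap Q_{F_2}^e$.

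The most delicate point I expect is the openness assertion in part (a), which requires a careful identification of each irreducible component of $\bIQeo_\delta$ with a saturated component of $\IQe_\delta$ so that Lemma~\ref{FirstPptiesQF}~(b) genuinely applies componentwise; the remaining steps reduce to direct invocations of the cited lemmas and standard open-condition arguments on the orthogonal Grassmannian.
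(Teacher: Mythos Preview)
Your argument for part (b) is correct and matches the paper's proof essentially verbatim. Your covering and openness arguments in part (a) also follow the paper's approach, and your careful identification of the components of $\bIQeo_\delta$ with saturated components of $\IQe_\delta$ so that Lemma~\ref{FirstPptiesQF}~(b) applies is exactly right.

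However, there is a genuine gap in part (a): you never establish that $\QF \subseteq \bIQeo_\delta$. An ``open covering of $\bIQeo_\delta$'' requires the sets $\QF$ to be subsets of $\bIQeo_\delta$, not merely to intersect it in open sets; and this containment is precisely what is used in the proof of Theorem~\ref{Even} (where one needs $\overline{\QF} = X_i$, which requires $\QF$ to lie inside $\bIQeo_\delta$). Lemma~\ref{FirstPptiesQF}~(b) only gives openness of $\QF \cap X$ in $X$; it says nothing about whether points of $\QF$ outside $\IQeo_\delta$ actually lie in the closure $\bIQeo_\delta$. Indeed, by definition a point of $\QF$ may be a nonsaturated subsheaf $E$ with $\bE/E$ of type $\cT$, and Remark~\ref{SmoothabilityCounterexample} shows that such a subsheaf need \emph{not} deform to an isotropic subbundle in general. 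The required containment $\QF \subseteq \bIQeo_\delta$ is exactly the content of Proposition~\ref{QFoDense}, which uses the hypothesis $e \le e(V, f_\dpr, \delta)$ in an essential way and whose proof is substantial. You should invoke Proposition~\ref{QFoDense} at the outset of part (a), as the paper does.
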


\begin{proof} Since $e \le e( V, f_\dpr, \delta)$, by Proposition \ref{QFoDense} all the loci $\QF$ are contained in $\bIQeo_\delta$. By Lemma \ref{FirstPptiesQF} (a), each $\QF$ is open in $\bIQeo_\delta$. 
 Suppose $E \in \bIQeo_\delta$. Then $\bE / E$ is of type $\cT$ by Proposition \ref{Smoothable}; and by Lemma \ref{evdom}, we can find $F \in \IQo_{f_\dpr} (V)_\dpr$ intersecting $E$ in zero at some point, and hence at the generic point. Thus $E \in \QF$. This proves part (a).

As for (b): Suppose $F$ and $F'$ are elements of $\IQo_{f_\dpr} (V)_\dpr$. Then a generic $\Lambda \in \OG(V)_\delta$ intersects the fibers of both $F$ and $F'$ in zero. By Lemma \ref{evdom}, we can find a subbundle $E_1 \in \IQo_{f_\delta} (V)_\delta$ intersecting $F$ and $F'$ in rank zero. As by hypothesis $e \le f_\delta$, in view of Lemma \ref{parityefd} we may choose a type $\cT$ elementary transformation $E \subset E_1$ with $\deg (E) = e$. Then $[E \to V]$ belongs to $\QF \cap Q_{F'}^e$. \end{proof}

Set $e (V) := \min \{ f_1, f_2, e(V, f_1, 1), e (V, f_2, 2) \}$. Now we can prove the main theorem.


\begin{proof}[Proof of Theorem \ref{Even}] Suppose $e \le e (V)$. We shall show that any $\bIQeo_\delta$ which is nonempty is irreducible. Suppose $X_1$ and $X_2$ are nonempty irreducible components of $\bIQeo_\delta$. By Proposition \ref{covering} (b), there exist $F_1, F_2 \in \IQo_{f_\dpr} (V)_\dpr$ such that for $1 \le i \le 2$, the locus $Q_{F_i}^e$ intersects $X_i$ in an open, therefore dense subset of $X_i$. Since $Q_{F_i}^e$ is irreducible by Proposition \ref{QFirr}, it is dense in each $X_i$ containing it. Moreover, by Proposition \ref{covering} (a), the open set $Q_{F_1}^e \cap Q_{F_2}^e$ is nonempty. Therefore it is dense in each $Q_{F_i}^e$, thus also in each $X_i$. Hence
\[ X_1 \ = \ \overline{X_1} \ = \ \overline{Q_{F_1}^e} \ = \ \overline{Q_{F_1}^e \cap Q_{F_2}^e} \ = \ \overline{Q_{F_2}^e} \ = \ \overline{X_2} \ = \ X_2 . \]
Therefore, $\bIQeo_\delta$ is irreducible.

If $\ell$ is odd, then by Theorem \ref{ParityComponents} (b) and Definition \ref{labeling}, the locus $\IQeo_\delta$ is nonempty only if $e \equiv \delta \mod 2$. Therefore, $\overline{\IQeo} = \bIQeo_\delta$ is irreducible.

If on the other hand $\ell$ is even, then for all $x \in C$, the respective images of $\bIQeo_1$ and $\bIQeo_2$ via any evaluation map $\ev_x \colon \IQe \dashrightarrow \OG(V|_x)$ are disjoint. Hence $\overline{\IQeo_1}$ and $\overline{\IQeo_2}$ are disjoint and irreducible. \end{proof}

\noindent This proves Theorem \ref{TypeT} (b) in the even rank case, and we obtain the following characterization of the natural compactification $\overline{\IQeo}$ of $\IQeo$.

\begin{corollary} \label{compactification} Suppose $e \le e (V)$. Then $E \in \IQe$ belongs to $\overline{\IQeo}$ if and only if $\bE / E$ is of type $\cT$. \end{corollary}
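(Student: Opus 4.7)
The forward implication is precisely Theorem~\ref{TypeT}~(a), recorded as Proposition~\ref{Smoothable}: a limit of isotropic subbundles necessarily has its saturation quotient of type~$\cT$. So the content of the corollary is the converse, for which I would exhibit $E$ as a point of some cell $\QF$ known to lie in $\overline{\IQeo}$.

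Concretely, given $E \in \IQe$ with $\bE/E$ of type~$\cT$, the saturation $\bE$ is itself a rank $n$ isotropic subbundle of $V$ (isotropy is preserved on saturating), so $\bE$ determines a section of exactly one component $\OG(V)_\delta$. By Definition~\ref{deltapr}, the generic member of $\OG(V)_{\delta'}|_x$ meets $\bE|_x$ trivially; so by Lemma~\ref{evdom} applied at such a generic $x$ I can produce $F \in \IQ^\circ_{f_{\delta'}}(V)_{\delta'}$ with $F|_x \cap \bE|_x = 0$ and hence $\rank(F \cap \bE) = 0$. Since $E \cap F \subseteq \bE \cap F$, this gives $\rank(E \cap F) = 0$, and together with the type~$\cT$ hypothesis on $\bE/E$ it places $E$ in $\QF$.

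The conclusion then follows by invoking Proposition~\ref{QFoDense}, which gives $\QF \subseteq \overline{\IQeo}$ as soon as $e \le \min\{e(V, f_{\delta'}, \delta), f_\delta\}$ and $-e - f_{\delta'} + n\ell \equiv 0 \mod 2$. The numerical bound is automatic from $e \le e(V) = \min\{f_1, f_2, e(V, f_1, 1), e(V, f_2, 2)\}$. What remains is to check the parity condition, which in Proposition~\ref{covering} had been imposed as a hypothesis but here must be deduced. Since $\bE/E$ has type~$\cT$ it has even degree, so $e \equiv \deg(\bE) \bmod 2$. I would then split on the parity of $\ell$: when $\ell$ is even, Theorem~\ref{ParityComponents}~(a) gives $\deg(\bE) \equiv w(V) \equiv f_{\delta'} \bmod 2$, so $-e - f_{\delta'} + n\ell \equiv 0 \bmod 2$; when $\ell$ is odd, Theorem~\ref{ParityComponents}~(b) together with $\delta' \equiv \delta + n \bmod 2$ (from Definition~\ref{deltapr}) yields the same conclusion.

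The only nontrivial step is this final parity bookkeeping, which entangles $w(V)$, the labeling of $\OG(V)_\delta$, and the rank parity; but with the framework of Section~2 in hand it is a routine case analysis rather than a real obstacle.
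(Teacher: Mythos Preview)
Your argument is correct and is essentially the paper's own (implicit) proof: the forward direction is Proposition~\ref{Smoothable}, and for the converse you place $E$ in some $\QF$ exactly as in the proof of Proposition~\ref{covering}~(a), then invoke Proposition~\ref{QFoDense}. One simplification: the parity condition $-e - f_{\dpr} + n\ell \equiv 0 \mod 2$ need not be verified by a case analysis on $\ell$, since once you have exhibited $E \in \QF$ it is automatic from Lemma~\ref{FirstPptiesQF}~(a).
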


\subsection{Proof of Proposition \ref{QFoDense}} \label{QFoDenseProof}

To ease notation, we write $Y := \Gr(2, F)$ and $\hY$ for the cone over $\Gr( 2, F)$ in the vector bundle $L^{-1} \otimes \wedge^2 F$. Recall that a finite set $y_1 , \ldots , y_t \in \cc^{N + 1}$ (resp., $\pp^N$) is said to be \textsl{in general position} if for $1 \le t' \le t$, the span of any $t'$ of the $y_i$ has dimension $\min \{ t' , N + 1 \}$ (resp., $\min \{ t' - 1, N \}$). We recall now a special case of \cite[Definition 4.20]{CCH1}.

\begin{definition} \label{genYvalued} A principal part $p \in \Prin ( L^{-1} \otimes \wedge^2 F )$ will be called \textsl{general $\hY$-valued} if the following conditions are satisfied.
\begin{itemize}
\item $p$ can be represented by $\sum_{i=1}^t \frac{\sigma_i}{z_i}$ where $z_1 , \ldots , z_t$ are local parameters at distinct points $x_1 , \ldots , x_t$ of $C$ respectively, and $\sigma_i$ is a section of $\hY \subset L^{-1} \otimes \wedge^2 F$ near $x_i$.
\item The cohomology classes $\left[ \frac{\sigma_i}{z_i} \right]$ are in general position in $H^1 ( C, L^{-1} \otimes \wedge^2 F )$, when this space is nonzero.
\end{itemize}
\end{definition}
\noindent (Recall that by Lemma \ref{alternativePsi}, the class $\left[ \frac{\sigma_i}{z_i} \right]$ defines the image of the point $\sigma_i (x_i)$ by the map $\psi \colon \Gr (2, F)  \dashrightarrow \pp H^1 ( C, L^{-1} \otimes \wedge^2 F )$.)

\begin{proof}[Proof of Proposition \ref{QFoDense}] By Lemma \ref{QFnonempty} and Lemma \ref{parityefd}, the locus $\QF$ is nonempty. Suppose that $E$ is a point of $\QF \setminus \QFo$. We shall prove the proposition by showing that there exists a deformation $\{ \cE'_s : s \in \Delta \}$ of $E$ in $\QF$ parameterized by a disk $\Delta$ such that $\cE'_0 = E$ while $\cE'_s \in \QFo$ for $s \ne 0$.

The saturation $\bE$ of $E$ is an isotropic subbundle of $V$, of degree $\be \ge e$. By Lemma \ref{GoodCoords}, we may assume that $V$ is an extension of the form $V^p$ for an antisymmetric principal part $p$ satisfying $\bE = \Gamma_0 \cap V^p \cong \Ker ( p )$. Note that $\deg(p) = -\be - f + n\ell$, which by Lemma \ref{lemmaTypeT} is an even integer $2 \bt$.

By the proof of \cite[Lemma 2.4]{CH3} (essentially a diagonalization procedure; see also \cite[Lemma 2.7]{CH2}), the principal part $p$ can be represented by a sum
\[ \sum_{i = 1}^m \frac{\sigma_i}{z_i^{d_i}} \]
where $z_i$ is a local coordinate at a point $x_i \in C$, and $\sigma_i$ is a section of $L^{-1} \otimes \wedge^2 F$ near $x_i$ which is decomposable at every point of its domain. We have
\[ \sum_{i=1}^m d_i \ = \ \frac{1}{2} \left( \deg ( F^* \otimes L) - \deg ( \bE ) \right) \ = \ \bt , \]
where $\bt$ is as defined above. (Note that the $x_i$ need not be distinct.)
%
 Then by \cite[Lemma 4.21]{CCH1} and its proof, there exists an analytic family of principal parts $\{ p_s : s \in \Delta \}$ parameterized by an open disk $\Delta$ around $0 \in \cc$, such that $p_0 = p$, while for $s \ne 0$ we have
\[ p_s \ = \ \sum_{i=1}^m \sum_{j=1}^{d_i} \frac{\rho_{i, j}}{s} \frac{\sigma_i}{(z_i - s \tau_{i, j})} \]
for suitable nonzero scalars $\rho_{i, j}$ and $\tau_{i,j}$; and moreover $p_s$ is general $\hY$-valued in the sense of Definition \ref{genYvalued} for $s \ne 0$.

We claim that if $\be \ne e$, then we may assume that $E$ is a \emph{general} type $\cT$ elementary transformation of $\bE$. For; $\cT^{\be - e} ( \bE )$ is completely contained in $\QF$, since $\rank (\bE \cap F) = 0$. As $\cT^{\be - e} ( \bE )$ is irreducible by Lemma \ref{TypeTParameterSpace} (a), if a general point belongs to the closure of $\QFo$ in $\QF$, then in fact every point does.

Therefore, by Lemma \ref{TypeTParameterSpace} (b) we may assume that
\[ \bE / E \ \cong \ \bigoplus_{k=1}^{\frac{1}{2}(\be - e)} \cO_{u_k} \otimes \cc^2 , \]
for distinct points $u_k \in C$ lying outside $\Supp ( p )$. Notice that $\frac{1}{2} (\be - e) = t - \bt$. Thus for each $k$ there exists a local coordinate $w_k$ centered at $u_k$ and a decomposable section $\nu_k$ of $L^{-1} \otimes \wedge^2 F$ near $u_k$, such that 
\[ E \ = \ \bE \cap \Ker \left( \sum_{k=1}^{t - \bt} \frac{\nu_k}{w_k} \right) \ = \ \Ker \left( p_0 + \sum_{k=1}^{t - \bt} \frac{\nu_k}{w_k} \right) , \]
where as usual we view the principal parts as maps $F^* \otimes L \to \sPrin ( F )$.

Now by Definition \ref{defneVfd} of $e ( V, f_\dpr, \delta )$, we have the inequality
\[ \frac{1}{2} ( -e - f_\dpr + n\ell ) \ = \ t \ \ge \ h^1 ( C, L^{-1} \otimes \wedge^2 F ) + 1 . \]
Thus by \cite[Lemma 4.23]{CCH1},
 there exist nowhere zero analytic functions $a_{i, j} (s)$ and $b_k (s)$ on $\Delta$ such that the family of principal parts
\[ p'_s \ := \ p_s + \sum_{i=1}^m \sum_{j=1}^{d_i} s \cdot a_{i, j}(s) \cdot \frac{\sigma_i}{z_i - s \tau_{i, j}} + \sum_{k=1}^r s \cdot b_k (s) \cdot \frac{\nu_k}{w_k} \]
satisfies
\begin{equation} [ p'_s ] \ \equiv \ [ p ] \ = \ \delta (V) \ \hbox{ for all } s \in \Delta, \label{equivdeltaV} \end{equation}
and for $s \ne 0$, the principal part $p_s'$ is general $\hY$-valued in the sense of Definition \ref{genYvalued}.

We consider the family $\{ \cE_s : s \in \Delta \}$ of elementary transformations of $F^* \otimes L$ given by
\begin{equation} \cE_s \ = \ \Ker ( p_s ) \ \subset \ F^* \otimes L . \label{EsKerps} \end{equation}
For $s \ne 0$, the sheaf $\cE_s$ has degree $-e$ and $p_s$ is general $\hY$-valued. The latter condition clearly implies that $\cE_s$ satisfies property (ii), and furthermore, by the statement of general position and since $-e-f_{\dpr} + n \ell > h^1 ( C, V )$, in particular the cohomology classes
\[ \left[ \frac{\sigma_i}{z_i - s \lambda_{i, j}} \right] : \quad 1 \le i \le m; \quad 1 \le j \le d_i \quad \hbox{and} \quad \left[ \frac{\nu_k}{w_k} \right] : 1 \le k \le \bt \]
span $H^1 ( C, L^{-1} \otimes \wedge^2 F )$ for $s \ne 0$. Thus $\cE_s$ also has property (iii) for $s \ne 0$.

Following \cite[{\S} 4.3.1]{CCH1}, we globalize the construction (\ref{V}) to a family of extensions $\{ \cV_s : s \in \Delta \}$ by
\[ \cV_s (U) \ := \ V^{p_s} (U) \ = \ \{ ( f, \phi ) \in \sRat ( F ) (U) \oplus (F^* \otimes L) (U) : p_s ( \phi ) = \overline{f} \} \]
for each open set $U \subseteq C$. Since each $p_s$ is antisymmetric, as before (\ref{standardquadform}) restricts to an orthogonal structure on each $\cV_{p_s}$. 
 By (\ref{equivdeltaV}), in fact $\cV_s \cong V$ for all $s \in \Delta$. By the description (\ref{EsKerps}) and Proposition \ref{cohomlifting} (c), 
 the family $\cE \subset \pi_C^* (F^* \otimes L)$ defines a family of saturated subsheaves of $V$. As $\cE_s \subset \sRat (F^* \otimes L)$ and the latter is isotropic with respect to the form (\ref{standardquadform}), in particular $\cE_s$ is isotropic.

Now $\cE$ is only flat over $\Delta \setminus \{ 0 \}$, as $\Ker ( p_s )$ has degree $-e$ for $s \ne 0$ but $\cE_0 = \bE$ has degree $-\be$. Write $\cE_s'$ for the flat family in $\IQeo$ satisfying $\cE'_s = \cE_s$ for $s \ne 0$. The flat limit $\cE_0'$ in $\IQeo$ is a full rank subsheaf of $\bE = \Ker (p)$. We will have finished if we can show that $\cE_0' = E$. Now for $s \ne 0$, each $\cE_s' = \cE_s$ is contained in
\[ \Ker \left( \sum_{k=1}^{\frac{1}{2} (e - \be)} s \cdot \frac{\nu_k}{w_k} \right) . \]
As an element of $\cT^{2 \bt} ( F^* \otimes L )$, this is independent of $s$ for $s \ne 0$, being simply equal to
\begin{equation} \Ker \left( \sum_{k=1}^{\bt} \frac{\nu_k}{w_k} \right) . \label{constantET} \end{equation}
Thus the limit $\cE_0'$ is also contained in (\ref{constantET}). Hence $\cE_0'$ is contained in
\[ \Ker \left( \sum_{k=1}^{\bt} \frac{\nu_k}{w_k} \right) \cap \Ker ( p ) , \]
which is exactly $E$. As $\deg ( \cE_0' ) = -e$ by flatness, we have $\cE_0' = E$. \end{proof}

\section{Isotropic Quot schemes of odd rank orthogonal bundles} \label{OddRank}

Let $V \cong V^* \otimes L$ be an orthogonal bundle of rank $2n+1 \ge 3$. As noted in {\S} \ref{determinants}, the degree $\deg(L)$ is even, say $2m$. Therefore, $\det(V) = L^n \otimes M$ for some square root $M$ of $L$, where $\deg (M) = m$. 
Now $M \cong \Hom (M, L)$ is an $L$-valued orthogonal line bundle, and the orthogonal direct sum $V \perp M$ is an $L$-valued orthogonal bundle of rank $2n+2$ and determinant $L^n \otimes M \otimes M = L^{n+1}$. Thus $V \perp M$ has isotropic subbundles of rank $n+1$ by Theorem \ref{EvenRankExistence}.

\subsection{Saturated loci in odd rank}

We give now a generalization of \cite[Proposition 3.11 (3)]{CH4}. Note that $\IQ_{e + m} (V)$ and $\IQ_e (V \perp M )$ are subloci of, respectively, $\Quot_{n, e+m} (V)$ and $\Quot_{n+1, e} ( V \oplus M )$.

\begin{lemma} Each component of ${\IQ^\circ_{e+m} ( V \perp M )}$ is canonically isomorphic to $\IQeo$. \label{EvenToOdd} \end{lemma}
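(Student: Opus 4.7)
The plan is to construct two canonical morphisms $\phi_+, \phi_- \colon \IQeo \to \IQ^\circ_{e+m}(V \perp M)$ whose images land in the two connected components of $\OG(V \perp M)$ provided by Proposition~\ref{OGVTwoCpts} (applicable since $\det(V \perp M) = L^{n+1}$), and to verify each is an isomorphism onto the corresponding component. For the construction, fix $E \in \IQeo$. A determinant calculation using $V/E^\perp \cong E^* \otimes L$ together with the hypothesis $\det V = L^n \otimes M$ gives $\det(E^\perp / E) = M$; combined with the induced nondegenerate $L$-valued form on the line bundle $E^\perp/E$, this identifies $E^\perp/E$ with $M$ as orthogonal line bundles. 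Consequently the quotient
\[ (E^\perp \oplus M)/E \;\cong\; M \oplus M \]
is a rank-two $L$-valued orthogonal bundle with the standard diagonal form. By Proposition~\ref{rank2ortho} it decomposes as a direct sum of two isotropic line subbundles, namely the graphs of multiplication by $\pm i$, each isomorphic to $M$. Pulling back these two isotropic lines along $E^\perp \oplus M \twoheadrightarrow M \oplus M$ produces two rank-$(n+1)$ isotropic subbundles $N_\pm \subset V \perp M$ of degree $e + m$, each containing $E$ with $N_\pm / E \cong M$. Since this construction is functorial in $E$, it yields the morphisms $\phi_\pm$.

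Fiberwise $N_+|_x \cap N_-|_x = E|_x$ has dimension $n$, and since $n \not\equiv n+1 \pmod 2$, Lemma~\ref{GH} places $N_+|_x$ and $N_-|_x$ in opposite components of $\OG(V \perp M)|_x$; hence $\phi_+$ and $\phi_-$ land in the two distinct components of $\OG(V \perp M)$. The candidate inverse is $\psi(N) := N \cap V$. As the kernel of the projection $N \to M$ into the torsion-free line bundle $M$, this $\psi(N)$ is a subbundle of $V$, automatically isotropic, and of rank exactly $n$. A degree count gives $\deg \psi(N) \ge e$, with equality iff $N \to M$ is surjective; in the equality case, redoing the isotropy analysis from the construction identifies $N$ as $\phi_+(\psi(N))$ or $\phi_-(\psi(N))$ according to the component it inhabits.

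The principal obstacle is to verify that on every component of $\IQ^\circ_{e+m}(V \perp M)$ the projection $N \to M$ is generically surjective, so that $\psi$ is defined on a dense open subset and serves as inverse to the appropriate $\phi_\pm$. I would stratify by $k := \deg(N \cap V) - e \ge 0$: the stratum with fixed $k \ge 1$ is parameterized by $E' \in \IQ^\circ_{e+k}(V)$, $D \in C^{(k)}$ (encoding $N/E' \cong M(-D) \subset M$), and a discrete sign fixed by isotropy, for total dimension $\dim \IQeo - (n-1)k$. A direct $\chi$-computation shows $\IQeo$ and $\IQ^\circ_{e+m}(V \perp M)$ have the same expected dimension, so for $n \ge 2$ every stratum with $k \ge 1$ has strictly smaller dimension than a component and hence cannot constitute one. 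The borderline case $n = 1$ (rank-three $V$) requires an explicit one-parameter deformation exhibiting each bad $N$ as a flat limit of $N$'s for which $N \to M$ is surjective. Granting this, each component of $\IQ^\circ_{e+m}(V \perp M)$ equals the image of exactly one $\phi_\pm$, yielding the canonical isomorphism claimed.
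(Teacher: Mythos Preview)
Your construction of the two morphisms $\phi_\pm$ via the isotropic lines in $M \oplus M$ is correct and is in fact a more explicit version of what the paper does (the paper invokes \cite[Lemma 3.10 (3)]{CH4} for the existence of the two completions). The candidate inverse $\psi(N) = N \cap V$ is also the same as the paper's. Where you go astray is in treating the surjectivity of $N \to M$ as the ``principal obstacle'' and attacking it by a dimension count. That count relies on $\IQ^\circ_{e+k}(V)$ having the expected dimension, which need not hold for arbitrary $V$ and $e$ (Lemma \ref{VeryStableSmooth} only gives this for general $V$), so the stratification argument does not prove the lemma in the generality stated; you yourself note that $n=1$ is left open.

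The paper avoids all of this with a one-line linear-algebra observation. For any $N \in \IQ^\circ_{e+m}(V \perp M)$ and any $x \in C$, the subspace $N|_x \cap V|_x$ is isotropic in the fiber $V|_x$ of the odd-rank bundle, so has dimension at most $n$; on the other hand, since $V|_x$ has codimension one in $(V \perp M)|_x$, we have $\dim(N|_x \cap V|_x) \ge (n+1) - 1 = n$. Hence $\dim(N|_x \cap V|_x) = n$ for every $x$, the map $N \to M$ has rank one at every fiber and is therefore surjective, and $\psi(N) = N \cap V$ is automatically a rank $n$ isotropic subbundle of degree exactly $e$. This makes $\psi$ a well-defined morphism $\IQ^\circ_{e+m}(V \perp M) \to \IQeo$, and your own analysis then shows it is inverse to $\phi_+$ on one component and to $\phi_-$ on the other. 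Once you insert this observation, your proof is complete and essentially coincides with the paper's.
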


\begin{proof} For any rank $n+1$ isotropic subbundle $\tE \subset V \perp M$, the intersection $\tE \cap V$ is isotropic in $V$. As an isotropic subspace of a fiber $V|_x$ has dimension at most $n$, in fact $\dim ( E \cap V|_x ) = n$ for all $x \in C$. Therefore, we have a diagram of maps of vector bundles
\[ \xymatrix{ 0 \ar[r] & V \ar[r] & V \perp M \ar[r] & M \ar[r] & 0 \\
 0 \ar[r] & \tE \cap V \ar[r] \ar[u] & \tE \ar[r] \ar[u] & M \ar[r] \ar[u]^= & 0 , } \]
Thus the association $\tE \mapsto \tE \cap V$ defines a morphism $ {\IQ^\circ_{e+m} (V \perp M)} \to \IQeo$.

Now suppose $E \in \IQeo$. By Lemma \ref{EvenRankExistence}, we can find an orthonormal frame for $V \perp M$ over some Zariski open subset $U \subseteq C$. By \cite[Lemma 3.10 (3)]{CH4}, the bundle $E|_U$ can be completed to an isotropic subbundle of $(V \perp M)|_U$ in exactly two ways. As $C$ is a curve, each such completion admits a unique extension to a rank $n+1$ isotropic subbundle $\tE$ of $V \perp M$, clearly satisfying $\tE \cap V = E$. 
 We denote these completions by $\tE_1$ and $\tE_2$. By the argument in the preceding paragraph, each $\tE_j$ is an extension $0 \to E \to \tE_j \to M \to 0$, so defines a point of $\IQ^\circ_{e+m} ( V \perp M )$. Moreover, as
\[ \rank ( \tE_1 \cap \tE_2 ) \ = \ \rank (E) \ = \ n \ \not\equiv \ (n + 1) \mod 2 , \]
by Lemma \ref{GH} and Theorem \ref{Even} (a) the points $[ \tE_1 \to V \perp M ]$ and $[ \tE_2 \to V \perp M ]$ belong to opposite components of $\IQ^\circ_e ( V \perp M )$. We conclude that the map $\IQ^\circ_{e+m} ( V \perp M )_\delta \to \IQeo$ is bijective for each $\delta \in \{ 1 , 2 \}$. \end{proof}

Before proving Theorem \ref{Odd}, we recall for convenience the definition of the Stiefel--Whitney class $w(V)$ for orthogonal bundles $V$ of rank $2n+1$. In this case, $\ell = 2m$ is even, and by Lemma \ref{twistOdd} there exists $M$ such that $V \otimes M^{-1}$ is $\Oc$-valued orthogonal of trivial determinant. Then 
\[ w(V) \ := \ w_2 (V \otimes M^{-1}) + nm \quad \text{in} \quad H^2 (C, \Z_2 ) \ \cong \ \Z_2. \]

\begin{proof}[Proof of Theorem \ref{Odd}] Suppose $V \cong V^* \otimes L$ is orthogonal of rank $2n+1$ and of determinant $L^n \otimes M$. As above, $V \perp M$ is $L$-valued orthogonal of rank $2(n+1)$ and determinant $L^{n+1}$. By Theorem \ref{Even} (a) and Lemma \ref{EvenToOdd}, for $e \le e ( V \perp M ) - m$ and $e \equiv w ( V \perp M ) - m \mod 2$, the locus $\IQeo$ and hence also $\overline{\IQeo}$ is nonempty, irreducible and generically smooth of the expected dimension. But we have 
\[ w ( V \perp M ) - m \ = \ w_2 ( (V \perp M) \otimes M^{-1} ) + (n+1)m - m \ = \ w_2 ( V \otimes M^{-1} ) + nm \ = \ w(V) . \]
Setting $e(V) = e ( V \perp M ) - m$, we are done. \end{proof}

\subsection{Nonsaturated loci in odd rank} \label{OddRankComponents}

Although we will not give the same level of detail as in the even rank case, we make one observation for a general orthogonal $V$ of odd rank. We continue to assume $L$ is a line bundle of even degree $2m$.

\begin{lemma} Let $V$ be an $L$-valued orthogonal bundle of rank $2n+1$ which is general in its component of moduli. Then for all $e$, the saturated locus $\IQeo$ has the expected dimension $I ( n+1, 2m, e - m )$. \label{VeryStableOddRank} \end{lemma}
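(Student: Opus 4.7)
The plan is to adapt the proof of Lemma~\ref{VeryStableSmooth} directly to the odd rank setting, working with the principal $\SO_{2n+1}$-bundle underlying $V$. Routing through Lemma~\ref{EvenToOdd} to reduce to Lemma~\ref{VeryStableSmooth} applied to $V \perp M$ is not immediately available, since $V \perp M$ is orthogonally decomposable and so never general in the rank $2(n+1)$ orthogonal moduli. As a preliminary step I would use Lemma~\ref{twistOdd} to reduce to the case $L = \Oc$, $\det V = \Oc$: twisting by a suitable line bundle $M'$ gives an $\Oc$-valued orthogonal bundle $V_0$ of trivial determinant with $\IQ^\circ_e(V) \isom \IQ^\circ_{e - nm'}(V_0)$, preserving both actual and expected dimensions, and a general $V$ corresponds to a general $V_0$.

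Next I would establish the odd rank analog of the tangent space description in Lemma~\ref{Zariski}. For a saturated isotropic subbundle $E \subset V_0$ of rank $n$, the flag $E \subset E^\perp \subset V_0$ and the line bundle $N := E^\perp/E$ (trivial in our reduced setting) yield an identification $T_E\,\IQ^\circ_e(V_0) = H^0(C, T_\pi|_E)$, where $T_\pi$ is the relative tangent bundle of $\OG(n, V_0) \to C$. The fiber $T_\pi|_E$ sits in a short exact sequence
\[
0 \longrightarrow E^* \otimes N \longrightarrow T_\pi|_E \longrightarrow \wedge^2 E^* \longrightarrow 0,
\]
and a Riemann--Roch calculation identifies $\chi(C, T_\pi|_E)$ with the stated expected dimension $I(n+1, \ell, e - \ell/2)$ after untwisting. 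Thus $\IQ^\circ_e(V_0)$ is smooth of expected dimension at $E$ whenever $h^1(C, \wedge^2 E^*) = 0$ and $h^1(C, E^* \otimes N) = 0$.

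The heart of the argument, following Lemma~\ref{VeryStableSmooth}, is to deduce both vanishings from very stability of the principal $\SO_{2n+1}$-bundle $P$ underlying $V_0$. The subbundle $E$ gives a reduction of $P$ to the maximal parabolic $Q \subset \SO_{2n+1}$ stabilizing an isotropic $n$-subspace. A type $B_n$ Lie algebra computation identifies the nilpotent radical of $\mathrm{Lie}(Q)$, as a $Q$-module, with a direct sum of two Levi-irreducible pieces: one giving the associated bundle $\wedge^2 E$ (from the long roots $e_i + e_j$), and one giving $E \otimes N^{-1}$ (from the short roots $e_i$). If either $h^1(C, \wedge^2 E^*) \ne 0$ or $h^1(C, E^* \otimes N) \ne 0$, Serre duality would produce a nonzero nilpotent element of $H^0(C, \Kc \otimes \mathrm{nilrad}\,\mathrm{Lie}(Q_E))$, contradicting very stability of $P$. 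By \cite[Corollary~5.6]{BR}, a general $P$ is very stable, completing the proof. The principal technical hurdle will be the Lie algebra identification for $\SO_{2n+1}$: one must track both the $\wedge^2 E$ piece (familiar from the even rank case) and the new $E \otimes N^{-1}$ piece arising from the short roots of $B_n$, and verify that their cohomology vanishings imply that of the full extension $T_\pi|_E$. Once this is in place, the argument closes just as in Lemma~\ref{VeryStableSmooth}.
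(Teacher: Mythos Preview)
Your proposal is correct and reaches the same conclusion by essentially the same mechanism as the paper: both reduce to a cohomology vanishing that follows from very stability of the principal $\SO_{2n+1}$-bundle underlying $V$, then invoke \cite[Corollary~5.6]{BR}. The only difference is in how the relevant cohomology group is identified. The paper does route through Lemma~\ref{EvenToOdd}, but only to identify the tangent space as $H^0(C, L \otimes \wedge^2 (E^\perp)^*)$ (equivalently, to reduce to showing $h^0(C, \Kc L^{-1} \otimes \wedge^2 E^\perp) = 0$); the very stability input is then for $V$ itself, not for $V \perp M$, so your worry about $V \perp M$ not being general is correctly anticipated but already sidestepped by the paper. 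Your direct computation of $T_\pi|_E$ as an extension $0 \to E^* \otimes N \to T_\pi|_E \to \wedge^2 E^* \to 0$ is simply the same bundle viewed intrinsically: since $E^\perp$ is an extension of $N$ by $E$, one checks that $L \otimes \wedge^2 (E^\perp)^*$ sits in exactly this sequence. Your parabolic description has the advantage of making the nilpotence of the resulting Higgs field immediate (any section of $\Kc \otimes \mathrm{nilrad}$ is nilpotent), whereas the paper's hidden argument verifies $\gamma^3 = 0$ by hand.
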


\begin{proof} By Lemma \ref{EvenToOdd}, it suffices to show that $h^0 (C, \Kc L^{-1} \otimes \wedge^2 E^\perp ) = 0$ for all $E \in \IQeo$. One shows that this is satisfied by all very stable $V$, by an argument similar to that in the proof of Lemma \ref{VeryStableSmooth}. We omit the details. \end{proof}

\begin{proposition} Let $V$ be a general $L$-valued orthogonal bundle of rank $2n+1$. Then $\IQe$ is equidimensional of dimension $I \left( n+1, 2m , e - m \right)$. \end{proposition}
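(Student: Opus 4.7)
The plan is to stratify $\IQe$ by saturation degree and combine the relative Quot scheme construction from Theorem~\ref{ComponentsOfIQe}(a) with the dimension formula of Lemma~\ref{VeryStableOddRank}. For each $e_1 \ge e$, let $Y_{e_1} \subset \IQe$ denote the locally closed subset (cut out by upper-semicontinuity of torsion length in $V/\mathcal{E}$) consisting of $[E \to V]$ with $\deg \overline{E} = e_1$, so that $\IQe = \bigsqcup_{e_1 \ge e} Y_{e_1}$. For each irreducible component $Y'$ of $\IQ^\circ_{e_1}(V)$, the relative Quot scheme construction in the proof of Theorem~\ref{ComponentsOfIQe}(a) carries over verbatim to the odd rank case: the relative scheme $\cQuot_{n, e}(\mathcal{E}|_{Y'}) \to Y'$ associated to the universal subsheaf yields an irreducible locally closed subscheme $Y'(e_1, e) \subseteq Y_{e_1}$ of dimension $\dim Y' + n(e_1 - e)$, and these give a decomposition of $Y_{e_1}$ into irreducible pieces.

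Next, I would apply Lemma~\ref{VeryStableOddRank}: for general $V$, every nonempty $\IQ^\circ_{e_1}(V)$ has dimension $I(n+1, 2m, e_1 - m)$. A direct calculation then yields
\[
\dim Y'(e_1, e) \ = \ I(n+1, 2m, e_1 - m) + n(e_1 - e) \ = \ I(n+1, 2m, e - m),
\]
a value independent of $e_1$. This cancellation of the $e_1$-dependence is the decisive observation: it fails in even rank, where it accounts for the pathological excess dimensions of Corollary~\ref{GeneralPathology}, but holds in odd rank precisely because the $e_1$-coefficient $-n$ in the expected dimension of $\IQ^\circ_{e_1}(V)$ cancels against the $ne_1$ contributed by the Quot scheme of the torsion.

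Finally, to deduce equidimensionality, let $C$ be any irreducible component of $\IQe$ with generic point $\eta$, set $e_1 := \deg \overline{E_\eta}$, and let $Y'$ be the component of $\IQ^\circ_{e_1}(V)$ containing $\overline{E_\eta}$. Then $\eta$ lies in $Y'(e_1, e) \cap C$; since $Y'(e_1, e)$ is irreducible and $\IQe$ has only finitely many irreducible components, $Y'(e_1, e)$ lies wholly in one component, necessarily $C$. Conversely, the locus $\{ [E \to V] \in C : \deg \overline{E} = e_1,\ \overline{E} \in Y' \}$ is open in $C$ and contains the generic point $\eta$, so is dense in $C$, whence $C \subseteq \overline{Y'(e_1, e)}$. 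Therefore $C = \overline{Y'(e_1, e)}$ and $\dim C = I(n+1, 2m, e - m)$, as required.

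The main technical points requiring care are the verification that $Y_{e_1}$ is locally closed and that saturation defines a morphism on it (both standard consequences of flatness and upper-semicontinuity of torsion length), and a careful transfer of the irreducibility and dimension statement of Theorem~\ref{ComponentsOfIQe}(a) to odd rank. Once these are in place, the dimension cancellation above together with the finiteness of irreducible components of $\IQe$ immediately gives the result; no genuinely new ideas beyond those already used in the even rank analysis are required.
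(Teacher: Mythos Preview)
Your proposal is correct and follows essentially the same approach as the paper: stratify by saturation degree, use the relative Quot scheme fibration over $\IQ^\circ_{e_1}(V)$, invoke Lemma~\ref{VeryStableOddRank} for the dimension of the base, and observe the cancellation $I(n+1,2m,e_1-m)+n(e_1-e)=I(n+1,2m,e-m)$. The paper's version is slightly more streamlined---it starts directly from an arbitrary component $Y$, takes $\bar e$ to be the generic saturation degree on $Y$, and reads off $\dim Y$ from the fiber bundle $Y^\circ \to \IQ^\circ_{\bar e}(V)$ without separately constructing the pieces $Y'(e_1,e)$---but the content is the same. (One cosmetic point: you use $C$ for a component of $\IQe$, which clashes with the paper's use of $C$ for the curve.)
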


\begin{proof} Let $Y$ be any nonempty irreducible component of $\IQe$. By generality and by Lemma \ref{VeryStableOddRank}, we may assume that for all $e$, the saturated locus $\IQeo$ has the expected dimension $I \left( n+1, 2m , e - m \right)$. Let $\be$ be the degree of the saturation of a general element of $Y$. More precisely, let $\be$ be the unique integer such that the locally closed subset
\[ Y^\circ \ := \ \{ [ E \to V ] \in Y : \deg ( \bE ) \ = \ \be \} \]
is open and dense in $Y$. Now $Y^\circ$ is a fiber bundle over a component of $\IQ_{\be}^\circ (V)$, with fiber $\Elm^{\be-e}(\bE)$ over $\bE$. Thus $\dim Y = \dim Y^\circ = \dim \IQ_{\be}^\circ (V) + \dim \Elm^{\be-e}(\bE)$. By the hypothesis of expected dimension, we obtain
\[ \dim Y \ = \ I \left( n+1, 2m , \be - m \right) + n ( \be - e ) \ = \ I \left( n+1, 2m , e - m \right) . \qedhere \]
\end{proof}

\bibliography{orthQuot_bib}

\bibliographystyle{abbrv}

\end{document}